\newtheorem{thm}{Theorem}[section]
\newtheorem{cor}[thm]{Corollary}
\newtheorem{lem}[thm]{Lemma}
\theoremstyle{definition}
\theoremstyle{remark}
\newtheorem{rem}[thm]{Remark}
\numberwithin{equation}{section}
\newcommand{\abs}[1]{\left\vert#1\right\vert}
\newcommand{\set}[1]{\left\{#1\right\}}
\newcommand{\norm}[1]{\left\Vert#1\right\Vert}
\newcommand{\n}{\nabla}
\newcommand*{\mn}{\mathop{\ooalign{$\|$\cr$\textbf{--}$}}}
\newcommand{\mnorm}[1]{\mn#1\|}
\newcommand{\rbrackets}[1]{\left( #1\right)}
\begin{document}

\title[]
 {On the biharmonic heat equation on complete Riemannian manifolds}

\author{Fei He}

\address{School of Mathematical Science, Xiamen University}

\email{hefei@xmu.edu.cn}

\thanks{}

\thanks{}

\subjclass{}

\keywords{}

\date{}

\dedicatory{}

\commby{}

% -----------------------------------------------------------------------------

\begin{abstract}
We study entire solutions of the biharmonic heat equation on complete Riemannian manifolds without boundary. We provide exponential decay estimates for the biharmonic heat kernel under assumptions on the lower bound of Ricci curvature and noncollapsing of unit balls. And we prove a uniqueness criteria for the Cauchy problem. As corollaries we prove the conservation law for the biharmonic heat kernel and a uniform $L^\infty$ estimate for entire solutions starting with bounded initial data.   
\end{abstract}

% -----------------------------------------------------------------------------
\maketitle

\begin{center}
 \textit{Dedicated to Professor Peter Li on the occasion of his 70th brithday. }
\end{center}
% -----------------------------------------------------------------------------
\section{Introduction}
The heat equation and heat kernel on Riemannian manifolds have been extensively studied and they serve as fundamental tools in geometric analysis, for example, the seminal work on parabolic Harnack estimates \cite{LY1986} by P. Li and S.-T. Yau has been applied and generalized in various literature. On the other hand, the fourth order biharmonic heat equation
\begin{equation}\label{eqn: biharmonic heat equation}
\partial_t u + \Delta^2 u = 0  
\end{equation}
on manifolds is still not well-understood, partly due to the absence of technical tools such as the maximum principle. In this article we consider equation (\ref{eqn: biharmonic heat equation}) on complete noncompact Riemannian manifolds without boundary, and prove some basic properties for entire solutions. 

Recall that (\ref{eqn: biharmonic heat equation}) on the Euclidean space $\mathbb{R}^n$ has a fundamental solution called the biharmonic heat kernel, which (when based at the origin) can be written as:
\begin{equation}\label{eqn: biharmonic heat kernel on Rn}
  b(x, t) = \frac{\alpha_n}{t^{n/4}} F_n(\frac{|x|}{t^{1/4}}),
\end{equation}
where $\alpha_n$ is a normalizing constant and
\[
  F_n(\eta) = \eta^{1-n} \int_0^\infty e^{-s^4} (\eta s)^{n/2} J_{(n-2)/2} (\eta s) ds,   
\]
for $\eta> 0$, $J$ is the Bessel function of the first type. In fact, the behaviour of the biharmonic heat kernel on $\mathbb{R}^n$ is already complicated, and many interesting results depend on its careful analysis, see for example \cite{GG2008} \cite{GG2009} \cite{KL2012} \cite{Wang2012} and the references therein. It is known  that the biharmonic heat kernel on the Euclidean space changes sign infinitely many times \cite{GG2008}, and its absolute value has exponential decay (\cite{KL2012})
\begin{equation}\label{estimate of biharmonic heat kernel on the Euclidean space}
  |b(x, t)| \leq  \frac{C}{t^{n/4}}e^{-\frac{ C |x|^{4/3} }{ t^{1/3}}}.  
\end{equation}

On a complete Riemannian manifold, it follows from standard semi-group theory that there exists a smooth symmetric kernel function $b(x, y, t)$ for the semi-group generated by $-\Delta^2$
densely defined on $L^2(M)$, which is called the biharmonic heat kernel. We prove that $b(x, y,t)$ has 
exponential decay on locally noncollapsed manifolds with Ricci curvature bounded from below. In particular, if the manifold $(M,g)$ is locally noncollapsed and has nonnegative Ricci 
curvature, we have a Li-Yau type estimate. To state it let's first introduce some notations: Denote the volume of a geodesic ball centered at point $p$ with radius $r$ as $V_p(r) = Vol B(p, r)$, let
\begin{equation}\label{definition of the volume ratio}
  \nu_p(r) = \frac{V_p( r)}{\omega_n r^n},
\end{equation}
where $\omega_n$ is the volume of the unit ball in the Euclidean space. On a complete Ricci-nonnegative manifold, $\nu_p(r)\leq 1$ and is nonincreasing by the volme comparison theorem. We say a Ricci-nonnegative manifold has maximal volume growth if $\nu_p(r)$ has a uniform positive lower bound, in this case, it's easy to verify that the lower bound is independent of the base point $p$.  

Choose an increasing function $\psi_p(s)$ such that
\begin{equation}\label{definition of psi}
  \psi_p(s) \leq  \frac{s^4}{ (1- \ln \nu_p(2s))^3} \quad for \quad s > 0,
\end{equation}
note that the RHS is an increasing function when $s$ is large enough, and it is equivalent to $s^4$ when $s \to 0$, so we can take $\psi_p(s)$ such that 
the equation holds in (\ref{definition of psi}) when $s >>1$, and $\psi_p(s) /s^4 \to 1$ as $s \to 0$ . 
\begin{thm}\label{thm: estimate of biharmonic heat kernel nonnegative Ricci curvature}
  Suppose $M$ is a complete noncompact Riemannian manifold with $Ric (x) \geq 0$. Then the biharmonic heat kernel satisfies for any $p,q\in M$and  $t> 0$,
 \[
 | b(p,q,t) | \leq \frac{C(n, \mu) } {\sqrt{V_p(t^{\frac{1}{4}}) V_q(t^{\frac{1}{4}})}} \Psi_p(t) \Psi_q(t) e^{-\frac{c(n) d(p,q)^{4/3}}{t^{1/3}} },
 \] 
 where
 \[
 \Psi_p(t) :=  (1- \ln \nu_p(t^\frac{1}{4}))^{c(\mu)} e^{c(n) ( \frac{\psi_p^{-1}(t)^4}{t} )^\frac{1}{3}},
 \]
$\mu = n$ when the dimension $n \geq 3$ and $\mu > 2$ when $n = 2$.
\end{thm}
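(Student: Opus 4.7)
The plan is to adapt E.\,B. Davies' exponential perturbation method, familiar for second-order heat kernels, to the biharmonic semigroup $e^{-t\Delta^2}$. The argument has three pieces: an on-diagonal ultracontractivity bound, an $L^2$-perturbation estimate via a twisted semigroup, and an optimization that produces the off-diagonal exponential factor.

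For the on-diagonal bound $b(x,x,t) \leq C\,\Psi_x(t)^2 / V_x(t^{1/4})$, use $b(x,x,2t) = \|b(x,\cdot,t)\|_2^2$ to reduce to an $L^1\to L^2$ bound for $e^{-t\Delta^2}$. The latter follows from a biharmonic Nash-type inequality $\|u\|_2^{2+8/n} \leq C\|\Delta u\|_2^2 \|u\|_1^{8/n}$, derived from the classical Sobolev/Nash inequality for $-\Delta$ on Ricci-nonnegative locally non-collapsed manifolds combined with $\|\nabla u\|_2^2 \leq \|u\|_2 \|\Delta u\|_2$. The Sobolev constant depends on $\nu_x$, and tracking this dependence through the iteration produces the factor $(1-\ln\nu_p)^{c(\mu)}$ in $\Psi_p$.

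For the twist, for Lipschitz $\phi$ set $T_\phi(t) := e^\phi e^{-t\Delta^2} e^{-\phi}$; its kernel is $e^{\phi(x)-\phi(y)}b(x,y,t)$ and its generator is $-\Delta_\phi^2$ with $\Delta_\phi = e^\phi\Delta e^{-\phi} = \Delta - 2\nabla\phi\cdot\nabla + (|\nabla\phi|^2 - \Delta\phi)$. Using $\Delta_\phi^* = \Delta_{-\phi}$, expand $\RE\langle\Delta_\phi^2 u, u\rangle = \langle\Delta_\phi u, \Delta_{-\phi} u\rangle$, integrate by parts (invoking Bochner on $\Delta|\nabla\phi|^2$, where $\mathrm{Ric}\geq 0$ contributes with the favourable sign), and absorb the gradient cross terms via $\int|\nabla\phi|^2|\nabla u|^2 \leq \|\nabla\phi\|_\infty^2\|u\|_2\|\Delta u\|_2$ plus Young's inequality, to obtain
\[
\RE\,\langle\Delta_\phi^2 u,u\rangle \geq \tfrac12\|\Delta u\|_2^2 - C\,\Lambda(\phi)\,\|u\|_2^2,
\]
with $\Lambda(\phi)$ of order $\|\nabla\phi\|_\infty^4 + \|\nabla^2\phi\|_\infty^2$; hence $\|T_\phi(t)\|_{L^2\to L^2} \leq e^{C\Lambda(\phi)t}$. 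Taking $\phi(z) = \alpha\tilde d(z,q)$ with $\tilde d$ a smoothed distance ($\|\nabla\tilde d\|_\infty\leq 1$, $\|\nabla^2\tilde d\|_\infty \leq C/\sigma$ at thickening scale $\sigma$) and choosing $\sigma\sim 1/\alpha$ gives $\Lambda(\phi)\leq C\alpha^4$.

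Combining via an $L^1\to L^2$ bound at $p$ and (dually) $L^2\to L^\infty$ at $q$, together with $\phi(p)-\phi(q)=\alpha d(p,q)$, yields
\[
e^{\alpha d(p,q)}|b(p,q,t)| \leq \frac{C\,\Psi_p(t)\Psi_q(t)}{\sqrt{V_p(t^{1/4})V_q(t^{1/4})}}\, e^{C\alpha^4 t},
\]
and optimizing $-\alpha d + C\alpha^4 t$ at $\alpha \sim (d/t)^{1/3}$ produces the $e^{-c\,d(p,q)^{4/3}/t^{1/3}}$ factor. The main obstacle I anticipate is keeping the $\nu_p$-dependence sharp: the on-diagonal ultracontractivity only behaves cleanly above the scale $\psi_p^{-1}(t)$, and the exponential factor $e^{c(n)(\psi_p^{-1}(t)^4/t)^{1/3}}$ in $\Psi_p$ is precisely what emerges when the Davies optimization is carried out effectively at that scale rather than at the naive parabolic scale $t^{1/4}$.
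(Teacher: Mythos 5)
Your route (Davies' exponential twist $T_\phi=e^{\phi}e^{-t\Delta^2}e^{-\phi}$ plus Nash-type ultracontractivity) is genuinely different from the paper, which instead runs a Grigor'yan-style integrated maximum principle: a weighted $L^2$ monotonicity formula (Lemma \ref{time derivative of weighted l2 integral}) with weights $\xi,G$ chosen so that $\partial_t\xi+CG^2+C|\n G|^2G^{-1}+C|\Delta\xi|^2\leq 0$, combined with a local mean value inequality (Lemma \ref{lem: mean value inequality}) and the semigroup identity $b(p,q,t)=\int b(p,\cdot,t/2)b(\cdot,q,t/2)$. However, your proposal has a genuine gap at its load-bearing step: the claim that on an arbitrary complete manifold with $Ric\geq 0$ one can produce a smoothed distance $\tilde d$ with $\|\n\tilde d\|_\infty\leq 1$ and $\|\n^2\tilde d\|_\infty\leq C/\sigma$, $C=C(n)$. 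A Ricci lower bound gives no two-sided Hessian control whatsoever on (any regularization of) the distance function, and worse for your Bochner step, $\Delta|\n\phi|^2$ produces $\langle\n\phi,\n\Delta\phi\rangle$, i.e.\ third derivatives of the weight. Even retreating to the Laplacian alone, the best available bound on a globally defined distance-like function under $Ric\geq 0$ is $|\Delta f|\leq C(n)(1-\ln\nu_p(r))/r$ (this is exactly the content of the paper's Lemma \ref{global distance-like function on Ricci nonnegative manifolds}, built from the Schoen--Yau construction and a dyadic gluing); the logarithmic volume-ratio factor cannot be removed on collapsed manifolds. Consequently $\Lambda(\phi)\leq C\alpha^4$ fails: you get $\Lambda(\phi)\lesssim \alpha^4+\alpha^2(1-\ln\nu_p(1/\alpha))^2$, the optimization in $\alpha$ no longer closes at the naive scale, and this is precisely where the correction function $\psi_p$ and the factors $\Psi_p(t)=(1-\ln\nu_p(t^{1/4}))^{c(\mu)}e^{c(n)(\psi_p^{-1}(t)^4/t)^{1/3}}$ in the statement come from. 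You gesture at this in your last sentence, but the proposal as written asserts the unavailable Hessian bound rather than deriving the theorem from the degraded Laplacian bound.

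Two secondary points. First, your on-diagonal step needs ultracontractivity of the \emph{perturbed} semigroup $T_\phi$ (splitting $t$ into thirds does not let you fall back on the unperturbed $\|e^{-t\Delta^2}\|_{1\to 2}$, since the weight does not peel off the endpoint factors), so the Nash/Moser iteration must be rerun for the twisted form, again importing the derivatives of $\phi$; the paper avoids this by proving the mean value inequality only for genuine solutions and feeding in the weighted $L^2$ decay. Second, on a manifold with $Ric\geq 0$ but without maximal volume growth there is no single global Nash inequality with a uniform constant; the paper uses Saloff-Coste's \emph{local} Sobolev inequality on balls, with constant $C(n,\mu)V(r)^{-2/\mu}$, and the iteration is carried out on a fixed ball, which is why the conclusion is phrased with $V_p(t^{1/4})$ rather than a power of $t$. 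If you repair the weight function by replacing $\tilde d$ with the paper's distance-like $f_p$ and carry its $(1-\ln\nu_p)$-weighted Laplacian bound through your quadratic form estimate, the Davies scheme should reproduce the stated result, but those repairs are essentially Sections 2--4 of the paper.
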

\begin{rem}
The quantity $\Psi_p(t)$ can be estimated if we have more information about the volume growth. 

(i)  If $Ric \geq 0$ and $\nu_p(r) \geq v > 0$, $\forall p \in M, r> 0$, we can take $\psi_p(s) = C(v)s^4$, hence $\Psi_p(t) \leq C(n, \mu, v)$ for all $t> 0$. Then we have 
 \[
  | b(p,q,t) | \leq \frac{C(n, \mu,v) } {\sqrt{V_p(t^{\frac{1}{4}}) V_q(t^{\frac{1}{4}})}}e^{-\frac{c(n) d(p,q)^{4/3}}{t^{1/3}} },
\]
which is in the same form as the estimate of the biharmonic heat kernel on the Euclidean space (\ref{estimate of biharmonic heat kernel on the Euclidean space}). 

(ii) If we only have $Ric \geq 0$, by a well-known result of Yau \cite{Yau1976}, we know that $V_p(r) \geq c r$ for some constant $c>0$, hence $1 - \ln \nu_p(r)$ has at most logarithmic growth in $r$. We have taken $\psi_p(s)$ to be equal to the RHS in (\ref{definition of psi}) when $s$ is large enough, thus $\Psi_p(t)$ has at most polynomial growth in $t$ as $t \to \infty$. 
\end{rem}

Our estimates of the biharmonic heat kernel extend the domain of its corresponding semigroup to functions with certain growth condition, depending on the rate of volume growth. This answers partially the question of existence for the Cauchy problem of (\ref{eqn: biharmonic heat equation}).

Next we consider the uniqueness for solutions of the Cauchy problem of (\ref{eqn: biharmonic heat equation}). It is known to experts \cite{KL2012} that solutions bounded by $Ce^{C|x|^{4/3}}$ on $\mathbb{R}^n$ are unique. Another uniqueness theorem on $\mathbb{R}^n$ under different assumptions was proved in \cite{SW2016}, where the authors also provided an example of nonuniqueness generalizing \cite{T1935}. In this article we prove the following:

\begin{thm}\label{uniqueness theorem}
  Let M be a complete Riemannian manifold with $Ric(x) \geq - K(r(x))$, where $r(x)$ is the distance to a fixed point $p \in M$, and $K(r)$ is a nondecreasing function satisfying
  \[
  \int_1^\infty \frac{1}{K(r)^\frac{3}{2}} dr = \infty. 
  \] 
 Let $u$ be a solution of the biharmonic heat equation on $M \times (0, T]$. Suppose $u(x,0) = 0$ in $L^2_{loc}$ sense.  If there exist a constant $a>0$, such that 
 \begin{equation}\label{uniqueness class}
 \int_0^T t^a \int_{B(p,R)} u^2 \leq e^{L(R)},\quad R >>1,
 \end{equation}
 where $L(r)$ is a nondecreasing function satisfying
 \[\int_1^\infty \frac{r^3}{L(r)^3} = \infty.\]
  Then $u(x, t) \equiv 0$ for $(x, t) \in M \times (0,T]$. 
\end{thm}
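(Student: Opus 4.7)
The plan is to adapt Grigor'yan--Saloff-Coste's weighted $L^2$ uniqueness method from the heat equation to the fourth-order setting. The natural weight is $e^{-\xi}$ with $\xi(x,t)=c\,r(x)^{4/3}/(t_0-t+\delta)^{1/3}$, where $t_0$ is a forward reference time and $\delta>0$ a regularizing parameter; this matches the Euclidean biharmonic kernel decay \eqref{estimate of biharmonic heat kernel on the Euclidean space} and respects the parabolic scaling $r^4\sim t$ of \eqref{eqn: biharmonic heat equation}. Since $u(\cdot,0)\equiv 0$ in $L^2_{loc}$, it suffices to prove $u\equiv 0$ on a nontrivial slab $[0,t_\ast]$ and iterate.

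For $R>0$ large, pick a smooth radial cutoff $\eta_R$ equal to $1$ on $B(p,R)$ and vanishing outside $B(p,2R)$; the Laplacian comparison theorem (this is where $Ric\ge -K(r)$ enters) allows us to choose $\eta_R$ so that $|\nabla\eta_R|$, $|\Delta\eta_R|$ and $|\nabla\Delta\eta_R|$ are controlled by explicit functions of $R$ and $K(2R)$. Set
\[
F_R(t)=\int_M u^2(x,t)\,\eta_R^2(x)\,e^{-\xi(x,t)}\,dx,\qquad F_R(0)=0.
\]
Differentiate, substitute $u_t=-\Delta^2 u$, integrate by parts twice, expand $\Delta(u\eta_R^2 e^{-\xi})$ and apply Young's inequality to absorb all cross-products into the good term $-2\int \eta_R^2 e^{-\xi}(\Delta u)^2$; one additional integration by parts then converts residual $|\nabla u|^2$ factors into $u^2$ factors. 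The result is
\[
F_R'(t)\ \le\ \int_M u^2\eta_R^2\bigl[-\xi_t+C\bigl(|\nabla\xi|^4+(\Delta\xi)^2+|\nabla\Delta\xi|^{4/3}\bigr)\bigr]e^{-\xi}\,dx+\mathcal{B}_R(t),
\]
where $\mathcal{B}_R(t)$ collects the commutator contributions supported on the annulus $A_R=B(p,2R)\setminus B(p,R)$. The pointwise algebraic step is that every term in the bracket has the parabolic scaling $r^{4/3}/s^{4/3}$ with $s=t_0-t+\delta$, so taking $c$ small enough makes the bracket nonpositive (away from $p$, where $\xi$ must be smoothed so that $\Delta\xi$ does not blow up).

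Granting this, $F_R(t)\le\int_0^t\mathcal{B}_R$. On $A_R$ the weight is bounded by $\exp(-cR^{4/3}/(t_0-t+\delta)^{1/3})$, and a standard Caccioppoli estimate for the biharmonic heat equation on $B(p,3R)\setminus B(p,R/2)$ converts the $|\nabla u|^2,|\Delta u|^2$ factors in $\mathcal{B}_R$ to $u^2$ on a slightly larger annulus, with extra powers of $R^{-1}$ and $t^{-1}$. Combined with the growth hypothesis \eqref{uniqueness class}, this gives
\[
\int_0^{t_\ast}\mathcal{B}_R(s)\,ds\ \le\ P\bigl(R,K(R)\bigr)\,t_\ast^{-b}\,\exp\!\bigl(L(3R)-cR^{4/3}/t_\ast^{1/3}\bigr)
\]
for a polynomial $P$ and an exponent $b=b(a)>0$. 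Picking $t_\ast^{1/3}\le cR^{4/3}/(2L(3R))$ drives the exponent to $-\infty$ along a dyadic subsequence $R_k\to\infty$, so $F_{R_k}(t)\to 0$ and $u\equiv 0$ on $[0,t_\ast]$. The divergence $\int_1^\infty r^3/L(r)^3\,dr=\infty$ is precisely what guarantees that the successive intervals of length $\sim R_k^4/L(R_k)^3$ sum to $+\infty$, so finitely many iterations cover $[0,T]$; the parallel divergence $\int_1^\infty K(r)^{-3/2}\,dr=\infty$ controls the cutoff loss at each stage so that the iteration is not spoiled.

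The main obstacle is the pointwise bracket inequality. Several integrations by parts interact to produce mixed derivatives of $\xi$ whose constants must be fixed simultaneously, and on the manifold the bound $\Delta r\le C\sqrt{K(r)}$ introduces curvature error terms absent in the Euclidean argument; moreover $\xi=cr^{4/3}/s^{1/3}$ is not smooth at $p$, so $\Delta\xi$ must be regularized there without spoiling the scaling at infinity. Choosing the sharp coefficient $c$, the correct $K$-dependent smoothing of $\xi$, and a cutoff whose fourth-order errors remain absorbable is the delicate balance at the heart of the proof.
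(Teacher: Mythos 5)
Your overall strategy is the right one and matches the paper's in spirit: a weighted $L^2$ (Grigor'yan-type) argument with weight comparable to $\exp(-c\,r^{4/3}/(T-t)^{1/3})$, cutoffs with controlled Laplacian, and Young's inequality to absorb everything into the good term $-\int \phi^2 e^{\xi}(\Delta u)^2$. The genuine gap is in your iteration structure. You propose to prove $u\equiv 0$ on a fixed slab $[0,t_\ast]$ by letting $R\to\infty$; for the exponent $L(3R)-cR^{4/3}/t_\ast^{1/3}$ to tend to $-\infty$ you need $t_\ast^{1/3}\le cR^{4/3}/(2L(3R))$ for arbitrarily large $R$, i.e. $t_\ast$ no larger than a constant times $\liminf_{R\to\infty}R^{4}/L(R)^{3}$, and likewise the bracket-nonpositivity constraint forces $t_\ast$ to be at most about $\liminf_{R\to\infty}R/K(R)^{3/2}$. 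The hypotheses only assert divergence of the integrals, and admissible choices such as $L(r)=r^{4/3}(\log r)^{1/3}$ or $K(r)=(r\log r)^{2/3}$ give $R^{4}/L(R)^{3}=1/\log R\to 0$ and $R/K(R)^{3/2}=1/\log R\to 0$, so no positive slab length exists and your argument cannot even start; your closing claim that ``slabs of length $\sim R_k^4/L(R_k)^3$ sum to $+\infty$'' conflates the radius that must go to infinity in the limiting step with the radius that fixes the slab length. The paper avoids this with a Karp--Li type chain: for one terminal time $T$ and one base radius $R$, the radius grows geometrically, $R_i=4^iR$, while the time steps satisfy $\tau_i-\tau_{i+1}\le\min\{R_i/(1+K(\Lambda R_{i+1}))^{3/2},\,R_i^4/(8A^3L(R_{i+1})^3)\}$; the two divergence conditions guarantee that $t=0$ is reached after finitely many steps (the number depending on $R$), the vanishing initial data kills the last term via Lemma \ref{strong vanishing as t goes to 0}, the telescoped errors are of size $T^{k/2-2-a}/R^{k}$ as in (\ref{induction formula}), and only after the whole chain is summed does one let $R\to\infty$.

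Two further points would need repair even within a corrected iteration. First, your bracket contains $|\nabla\Delta\xi|^{4/3}$ and you ask Laplacian comparison to control $|\nabla\Delta\eta_R|$; a Ricci lower bound controls neither third derivatives of the distance function nor of cutoffs built from it (the distance function is not even smooth at the cut locus). The paper sidesteps this by constructing smooth distance-like functions with only gradient and Laplacian bounds (Lemma \ref{global distance-like function}) and arranging the integrations by parts (Lemma \ref{time derivative of weighted l2 integral}) so that no third derivative of the weight or cutoff ever appears, and so that the annulus error involves only $u^2$, not $|\nabla u|^2$ or $|\Delta u|^2$. Second, your handling of the $t^a$ factor is too loose: routing the annulus term through a Caccioppoli estimate introduces negative powers of $s$ inside $\int_0^{t_\ast}\mathcal{B}_R(s)\,ds$ near $s=0$, which the hypothesis (\ref{uniqueness class}) does not control; the paper deals with this by the identity $\frac1T F(T)-\frac1\tau F(\tau)=\int_\tau^T t^{-1}F'-\int_\tau^T t^{-2}F$ together with Young's inequality at a large exponent $k$ chosen so that $k/2-2>a$.
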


For example, the non-integrablility assumptions of Theorem \ref{uniqueness theorem} are satisfied for $K(r) = r^\frac{2}{3}$ and $L(r) = r^\frac{4}{3}$. As a direct corollary of the uniqueness and the exponential decay estimate Theorem \ref{exponential decay estimate with Ricci lower bound}, we have the conservation law for the biharmonic heat kernel under some geometric assumptions:
\begin{cor}\label{conservation law}
  Suppose a complete Riemannian manifold $M$ satisfies $Ric \geq -K$ and $Vol (B(x, 1)) \geq v > 0$, $\forall x \in M$, for some nonnegative constants $K$ and $v$. Then the biharmonic heat kernel satisfies
  \[
  \int_M b(x, y, t) dy = 1  
  \] 
  for any $x \in M$ and $t > 0$.
\end{cor}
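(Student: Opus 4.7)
The plan is to apply the uniqueness criterion of Theorem \ref{uniqueness theorem} to the difference $w(x,t):=\int_M b(x,y,t)\,dy-1$, which should be a solution of the biharmonic heat equation with zero initial data. Set $u(x,t):=\int_M b(x,y,t)\,dy$.

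First, I would verify that $u$ is a classical solution of $\partial_t u+\Delta^2 u=0$ on $M\times(0,\infty)$. The kernel estimate of Theorem \ref{exponential decay estimate with Ricci lower bound}, together with standard interior regularity for fourth order parabolic equations applied to $b(x,\cdot,\cdot)$, yields pointwise bounds on $\partial_t b$ and on the iterated covariant derivatives $\nabla_y^j b$ up to order four, all with off-diagonal decay of the form $e^{-c\,d(x,y)^{4/3}/t^{1/3}}$. This justifies differentiation under the integral sign and, using the symmetry $b(x,y,t)=b(y,x,t)$ and integration by parts on exhausting balls $B(p,R)$ (boundary contributions vanish as $R\to\infty$ thanks to the exponential decay), gives $\Delta_x^2 u=-\partial_t u$.

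Second, I would show $u(\cdot,t)\to 1$ in $L^2_{loc}$ as $t\to 0^+$. Fix $p\in M$ and $R>0$, and let $f_R=\chi_{B(p,2R)}\in L^2(M)$. The semigroup generated by $-\Delta^2$ is a strongly continuous contraction on $L^2$, hence $\int_M b(\cdot,y,t)f_R(y)\,dy\to f_R$ in $L^2$. For $x\in B(p,R)$ we have $f_R(x)=1$ and
\[
u(x,t)-\int_M b(x,y,t)f_R(y)\,dy=\int_{M\setminus B(p,2R)}b(x,y,t)\,dy.
\]
Since $d(x,y)\geq R$ on the domain of integration, this tail is controlled by the kernel bound plus Bishop--Gromov volume comparison and tends to zero as $t\to 0^+$, so $u\to 1$ in $L^2(B(p,R))$.

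Third, I would verify the growth hypothesis \eqref{uniqueness class} for $w$. Under $Ric\geq -K$ and $Vol(B(x,1))\geq v$, an annular decomposition $M=\bigcup_{k\geq 0}(B(x,(k+1)t^{1/4})\setminus B(x,kt^{1/4}))$, combined with Bishop--Gromov and the kernel bound of Theorem \ref{exponential decay estimate with Ricci lower bound}, shows $\int_M|b(x,y,t)|\,dy\leq C(n,K,v,T)$ uniformly for $x\in M$ and $t\in(0,T]$. Hence $w$ is uniformly bounded on $M\times(0,T]$, so $\int_0^T t^a\int_{B(p,R)}w^2\leq C\,V_p(R)\leq Ce^{cR}$ by volume comparison; this is of the form $e^{L(R)}$ with $L(R)=cR+O(1)$, which satisfies $\int_1^\infty r^3/L(r)^3\,dr=\infty$. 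The Ricci condition $Ric\geq -K$ (constant) trivially satisfies $\int_1^\infty K^{-3/2}\,dr=\infty$. Theorem \ref{uniqueness theorem} then forces $w\equiv 0$, giving the claimed conservation law.

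The main obstacle I anticipate is the second step. Without a maximum principle the semigroup does not automatically recover the constant $1$ in the limit, and one must rely on a careful truncation that applies the $L^2$ semigroup to a large-ball characteristic function and then peels off the tail via the sharp off-diagonal decay of $b$. Steps one and three are essentially routine consequences of the kernel estimate already established in Theorem \ref{exponential decay estimate with Ricci lower bound}, together with the geometric hypotheses of the corollary.
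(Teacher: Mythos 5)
Your proposal is correct and follows essentially the same route as the paper: both use the exponential decay estimate of Theorem \ref{exponential decay estimate with Ricci lower bound} to show that $u(x,t)=\int_M b(x,y,t)\,dy$ is a solution of the biharmonic heat equation lying in the uniqueness class (\ref{uniqueness class}) with initial data $1$, and then invoke Theorem \ref{uniqueness theorem}. The only difference is cosmetic: you establish a uniform bound $\int_M|b(x,y,t)|\,dy\leq C$ and spell out the $L^2_{loc}$ convergence to the initial data, whereas the paper settles for the cruder bound $|u(x,t)|\leq Ct^{-n/4}$ coming from the volume lower bound, which the $t^a$ weight in (\ref{uniqueness class}) absorbs.
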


Another corollary is the following uniform $L^\infty$ estimate for entire solutions of (\ref{eqn: biharmonic heat equation}) starting with bounded initial data. 
\begin{cor}\label{cor: uniform l-infinity estimate}
  Let $M$ be a complete Riemannian manifold with $Ric \geq 0$ and maximal volume growth $\nu_p(r) \geq v > 0$,  $\forall p \in M$, $r > 0$. Then there exists a constant $C(n, \mu, v)$, where $\mu$ is the same as in Theorem \ref{thm: estimate of biharmonic heat kernel nonnegative Ricci curvature}, such that for any solution $u(x, t)$ of the biharmonic heat equation (\ref{eqn: biharmonic heat equation}) satisfying (\ref{uniqueness class}), and with bounded initial data $ |u(x, 0)|_{L^\infty (M)} < \infty $, we have
  \[
  |u(x, t)|_{L^\infty(M)} \leq C |u(x, 0)|_{L^\infty(M)}   
  \] 
  for any $t > 0$.
\end{cor}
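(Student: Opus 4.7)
The plan is to show that $u$ coincides with the semigroup solution
\[
\tilde u(x,t) := \int_M b(x,y,t)\, u(y,0)\, dy,
\]
and then extract the $L^\infty$ bound from a uniform estimate on $\int_M |b(x,y,t)|\, dy$ delivered by Theorem~\ref{thm: estimate of biharmonic heat kernel nonnegative Ricci curvature}.

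\textbf{Step 1 (uniform kernel mass).} Under $Ric \ge 0$ and $\nu_p(r) \ge v$, part (i) of the remark following Theorem~\ref{thm: estimate of biharmonic heat kernel nonnegative Ricci curvature} supplies the clean bound
\[
|b(x,y,t)| \le \frac{C(n,\mu,v)}{\sqrt{V_x(t^{1/4}) V_y(t^{1/4})}}\, e^{-c(n)\, d(x,y)^{4/3}/t^{1/3}}.
\]
I would decompose $M$ into dyadic annuli $A_k = \{y : 2^{k-1} t^{1/4} \le d(x,y) < 2^k t^{1/4}\}$ for $k \ge 1$ together with $A_0 = B(x,t^{1/4})$. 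Bishop--Gromov gives $|A_k| \le \omega_n\, 2^{kn} t^{n/4}$, while maximal volume growth yields $V_x(t^{1/4}), V_y(t^{1/4}) \ge v\omega_n t^{n/4}$. The resulting series $\sum_k 2^{kn} e^{-c\, 2^{4(k-1)/3}}$ converges, so
\[
\int_M |b(x,y,t)|\, dy \le C(n,\mu,v)
\]
uniformly in $x \in M$ and $t>0$. In particular $\|\tilde u(\cdot,t)\|_{L^\infty} \le C(n,\mu,v)\, \|u(\cdot,0)\|_{L^\infty}$.

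\textbf{Step 2 (uniqueness identification).} Standard semigroup theory, together with the smoothness of $b$, shows that $\tilde u$ is a smooth entire solution of (\ref{eqn: biharmonic heat equation}) attaining the initial datum $u(\cdot,0)$ in $L^2_{loc}$. Set $w := u - \tilde u$; this is a biharmonic heat solution with zero initial data in $L^2_{loc}$. Since $\tilde u$ is uniformly bounded and Bishop--Gromov gives $V_p(R) \le \omega_n R^n$, one has $\int_0^T t^a \int_{B(p,R)} \tilde u^2 \le C R^n$. Combined with the hypothesis that $u$ lies in class (\ref{uniqueness class}), the difference $w$ satisfies (\ref{uniqueness class}) with, for instance, $L(R) = (n+1) \log R$ for large $R$, and this $L$ obeys $\int_1^\infty r^3/L(r)^3\, dr = \infty$. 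Furthermore, $Ric \ge 0$ permits taking $K$ to be any positive constant, which trivially satisfies $\int^\infty 1/K^{3/2}\, dr = \infty$. Theorem~\ref{uniqueness theorem} then forces $w \equiv 0$, so $u \equiv \tilde u$ and the claimed estimate follows.

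The principal technical point is the dyadic mass estimate of Step 1, though this is essentially routine given the exponential-Gaussian decay. A more delicate issue is verifying that $\tilde u$ attains its initial datum in the $L^2_{loc}$ sense required by Theorem~\ref{uniqueness theorem}; since $u(\cdot,0)$ is only assumed in $L^\infty$, this likely requires a dominated-convergence argument using the uniform integral bound of Step 1 applied to compactly supported smooth approximations of $u(\cdot, 0)$.
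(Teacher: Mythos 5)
Your overall strategy is the paper's: use Theorem \ref{uniqueness theorem} to identify $u$ with $\int_M b(x,y,t)u(y,0)\,dy$, then deduce the bound from a uniform estimate on $\int_M |b(x,y,t)|\,dy$ obtained from Theorem \ref{thm: estimate of biharmonic heat kernel nonnegative Ricci curvature}. Your Step 1 is correct; the dyadic-annuli summation is just a harmless variant of the paper's computation, which integrates in the radial variable ($r=t^{1/4}s$, integration by parts, and $Vol B(x,t^{1/4}s)/Vol B(x,t^{1/4})\le C(n)s^n$); both rest on Bishop--Gromov plus the noncollapsing hypothesis.

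The one concrete misstep is in Step 2, where you claim that $w=u-\tilde u$ satisfies (\ref{uniqueness class}) with $L(R)=(n+1)\log R$. That is false as written: the hypothesis on $u$ only provides $\int_0^T t^a\int_{B(p,R)}u^2\le e^{L_u(R)}$ for some admissible nondecreasing $L_u$, which may grow like $R^{4/3}$, so from $w^2\le 2u^2+2\tilde u^2$ the best available weight for $w$ is of the form $L_w(R)=L_u(R)+n\log R+C$, not a purely logarithmic one. The argument survives, but you must then verify that $L_w$ again satisfies $\int_1^\infty r^3/L_w(r)^3\,dr=\infty$; this is not a formal consequence of the same property for $L_u$ (for two general nondecreasing admissible weights the property is not preserved under sums), and it needs a short argument using the fact that the added term is only logarithmic together with the monotonicity of $L_u$: either $L_u(r)\ge\log r$ for all large $r$, in which case $L_w\le C(n)L_u$ eventually and the integral diverges; or there are arbitrarily large $r_k$ with $L_u(r_k)<\log r_k$, in which case $L_u\le \log r_k\le \log r+\log 2$ on $[r_k/2,r_k]$, hence $L_w\le C(n)\log r$ there, and the contributions of these intervals, each of size at least $c\,r_k^4/\log^3 r_k$, already force divergence. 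With that repair, and with the verification that $\tilde u$ is an entire solution attaining $u(\cdot,0)$ in $L^2_{loc}$ (a point you rightly flag but do not carry out, and which the paper also leaves implicit in its one-line appeal to Theorem \ref{uniqueness theorem}; here the conservation law of Corollary \ref{conservation law} applies, since $Ric\ge 0$ and $\nu_p\ge v$ give the required noncollapsing), your proof coincides with the paper's.
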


This result can be viewed as a ``maximum principle"
for the biharmonic heat equation. It holds true on any closed manifold, see Corollary \ref{cor: l-infinite estimate on closed manifolds}, where the constant depends also on the diameter of the manifold. However the conclusion does not always hold for arbitrary complete manifolds. In the appendix we provide an example of a complete Riemannian manifold where this property fails, namely, there exists a solution of (\ref{eqn: biharmonic heat equation}) with bounded initial data, satisfying \ref{uniqueness class}, but its $L^\infty$ norm goes to infinity as $t \to \infty$. It might be an interesting problem to look for other geometric conditions on Riemannian manifolds that ensure this property. 

We use integration by parts extensively in the proofs, and since equation (\ref{eqn: biharmonic heat equation}) is fourth order, we need cut-off functions with controlled Laplacian.  In Section 2, based on a construction of  Schoen and Yau \cite{SY1994}, we build distance-like functions and cut-off functions with controlled Laplacian depending on the Ricci lower bound, which will be used throughout the rest of the article. In Section 3, we derive $L^2$ estimates and a mean-value inequality for solutions of the biharmonic heat equation. In Section 4 we study the growth of a weighted $L^2$ integral of the solution, we show that it can be made monotone when the weight functions are properly chosen.  In section 5 we use the ingredients developed in the previous sections to generalize a method in \cite{G2009} to equation (\ref{eqn: biharmonic heat equation}), and prove exponential decay estimates for the biharmonic heat kernel on noncollapsed manifolds with Ricci curvature bounded from below, Theorem \ref{thm: estimate of biharmonic heat kernel nonnegative Ricci curvature} is proved as a special case. Theorem \ref{uniqueness theorem} and Corollary \ref{conservation law} are proved in Section 6. The $L^\infty$ estimates will be discussed in Section 7. 

\textbf{Acknowledgement:} This article is dedicated to my former advisor Professor Peter Li for his 70th birthday, I would like to thank Professor Li for his teaching and constant encouragement. 

\section{Construction of distance-like functions and cut-off functions}
In this article, we need distance-like functions with bounded gradient and Laplacian, which will be used in place of the Riemannian distance function. The construction follows from Schoen and Yau \cite{SY1994}, which is sufficient for application in the case $Ric \geq -1 $ uniformly, outside of a ball of certain radius. However, we will consider manifolds where the Ricci curvature is bounded by a function from below, which may go to $- \infty$. We will use a gluing argument to construct a distance-like function with gradient bounded uniformly, and with Laplacian bounded locally depending on the Ricci lower bound. Such a function exists on any complete Riemannian manifold. On a manifold with nonnegative Ricci curvature, we will use the same gluing argument to construct a distance-like function whose Laplacian has almost linear decay, depending on the volume growth rate.  
Here we include the proof of \cite{SY1994} with some slight modifications, and we make clear the dependence of the constants on geometric information which is important for our purpose. 

%%%%%%%%%%%%%%%%%%%%%%%%%%%%%%%%%%%%%%%%%%%%%%%%%%%%%%%%%%%%%%%%%%%%%%%%%%%%

\subsection{Schoen-Yau distance-like function}

We use $d(p,x)$ to denote the distance between two points $p$ and $x$ on a Riemannian manifold $M$, and use $B(p,R)$ to denote the geodesic ball with radius $R$ centered at $p$. The volume of a set $\Omega$ is denoted by $|\Omega|$. $C(n)$ denotes a constant that depends only on the dimension $n$, which may change from line to line.   

\begin{lem}[Schoen-Yau, Thm 4.2 \cite{SY1994}] \label{distance-like function lemma 1}
Suppose $Ric \geq -K$ on a geodesic ball $B(p,R+1) \subset M $ as a proper subset, where $R >3$,  then there exists a smooth function $f$ defined on $B(p,R)$ such that 
\[
d(p,x) \leq f(x) \leq C(n) d(p,x) + \frac{C(n) + v}{\sqrt{K+1}}, \]
\[ |\n f(x)| \leq C(n), \quad |\Delta f (x)| \leq C(n)\sqrt{1+K},
\]
for $x \in B(p,R)\backslash B(p,2)$, the constant $v = \abs{\log \left( (\sqrt{1+K})^n|  B(p, 1/\sqrt{1+K})| \right)}$. And we can take $R= \infty$, in which case the function $f$ is defined on the entire manifold. 
\end{lem}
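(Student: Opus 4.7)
The plan is to implement the Schoen--Yau strategy at the curvature-adapted scale $r_0 = 1/\sqrt{1+K}$: cover a neighborhood of $B(p,R)$ by a bounded-multiplicity family of geodesic balls of radius $\sim r_0$, construct a smooth partition of unity whose gradient and Laplacian are controlled by $1/r_0$ and $1/r_0^2$ respectively, and then define $f$ as the weighted average of the distances from $p$ to the chosen centers.

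First, I would pick a maximal $r_0$-separated set $\{x_i\} \subset B(p, R+1/2)$, so that the balls $B(x_i, r_0/2)$ are disjoint and $\{B(x_i, 2r_0)\}$ covers $B(p,R)$. Since $K r_0^2 \leq 1$, the Bishop--Gromov inequality (using $\mathrm{Ric} \geq -K$ on $B(p,R+1)$) controls ratios $|B(x_i, 4r_0)|/|B(x_j, r_0/2)|$ for neighboring centers by a dimensional constant, so the multiplicity of the cover is bounded by some $N(n)$. Relating these local volumes back to $|B(p,1/\sqrt{1+K})|$ via a chain of Bishop--Gromov comparisons is what introduces the logarithmic quantity $v$ in the end, because one has to convert a volume comparison at an arbitrary center $x_i$ near $p$ into a statement involving the basepoint volume.

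Second, I would build smooth cutoffs $\tilde\phi_i$ supported in $B(x_i, 2r_0)$, equal to $1$ on $B(x_i, r_0)$, with $|\n\tilde\phi_i| \leq C(n)/r_0$ and $|\Delta \tilde\phi_i| \leq C(n)(1+K)$. The gradient bound is immediate by composing a standard Euclidean bump with the distance $d(x_i, \cdot)$. The Laplacian bound invokes the Laplacian comparison $\Delta d(x_i,\cdot) \leq (n-1)\sqrt{K}\coth(\sqrt{K}\,d(x_i,\cdot)) \leq C(n)\sqrt{1+K}$ on the annulus $\{r_0 \leq d(x_i,\cdot) \leq 2r_0\}$; the singularity along the cut locus is handled either by mollification at a scale $\ll r_0$ or by replacing $d(x_i,\cdot)$ with an upper barrier. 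Normalizing $\phi_i = \tilde\phi_i / \sum_j \tilde\phi_j$ (the denominator is bounded above and below by the multiplicity estimate) yields a smooth partition of unity with the same orders of derivative bounds.

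Finally, set
\[
f(x) := \sum_i d(p, x_i)\, \phi_i(x) + c\, r_0,
\]
where the additive constant $c\,r_0$ is chosen to produce the lower bound $f \geq d(p,\cdot)$. Using $\sum_i \phi_i \equiv 1$, and therefore $\sum_i \n\phi_i \equiv 0$ and $\sum_i \Delta \phi_i \equiv 0$, one rewrites
\begin{align*}
\n f(x) &= \sum_i \bigl(d(p,x_i) - d(p,x)\bigr)\, \n\phi_i(x),\\
\Delta f(x) &= \sum_i \bigl(d(p,x_i) - d(p,x)\bigr)\, \Delta \phi_i(x).
\end{align*}
On $\mathrm{supp}(\phi_i)$ one has $|d(p,x_i) - d(p,x)| \leq 2r_0$, and only $N(n)$ indices contribute at any point, giving $|\n f| \leq C(n)$ and $|\Delta f| \leq C(n)\sqrt{1+K}$. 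The upper sandwich on $f$ follows from the triangle inequality together with the multiplicity estimate, the factor $v/\sqrt{1+K}$ being absorbed from the Bishop--Gromov chain in step one. For $R=\infty$ the same construction works on a compact exhaustion and the net $\{x_i\}$ remains locally finite. The main technical obstacle is step two: promoting the barrier form of Laplacian comparison to a pointwise, smooth Laplacian bound on the partition of unity, while tracking the precise dependence on $v$ and $\sqrt{1+K}$ rather than letting hidden volume factors creep in. This bookkeeping is exactly the ``slight modification'' of the Schoen--Yau proof that the paper needs for the later noncollapsing-sensitive estimates.
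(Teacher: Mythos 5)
Your plan founders at the step you yourself flag as the main obstacle: the claimed two-sided bound $|\Delta\tilde\phi_i|\le C(n)(1+K)$ for cutoffs of the form $\tilde\phi_i=\chi\bigl(d(x_i,\cdot)/r_0\bigr)$ cannot be obtained from a Ricci lower bound. Writing $\Delta\tilde\phi_i=\chi''|\n d|^2/r_0^2+\chi'\,\Delta d/r_0$ with $\chi'\le 0$, the Laplacian comparison $\Delta d(x_i,\cdot)\le C(n)\sqrt{1+K}$ controls only one side; there is no lower bound for $\Delta d$ in terms of $\mathrm{Ric}\ge -K$, and the distributional Laplacian of $d(x_i,\cdot)$ carries a negative singular measure on the cut locus, which after multiplication by $\chi'<0$ becomes an uncontrolled positive singular contribution to $\Delta\tilde\phi_i$. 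Neither of your proposed fixes works: an upper barrier only helps with one-sided estimates, and mollification cannot shrink the mass of that singular part. Concretely, on a flat cylinder $S^1_\eps\times\Real$ with $\eps\ll r_0$ (so $K=0$, $r_0=1$), the annulus $\set{r_0\le d(x_i,\cdot)\le 2r_0}$ meets the cut locus, across which $\n d$ jumps by an amount of order one; mollifying at scale $\delta\ll r_0$ then yields $|\Delta\tilde\phi_i|\sim 1/(r_0\delta)\gg 1+K$. The normalization $\phi_i=\tilde\phi_i/\sum_j\tilde\phi_j$ inherits the same defect, so the identity $\Delta f=\sum_i\bigl(d(p,x_i)-d(p,x)\bigr)\Delta\phi_i$ does not give $|\Delta f|\le C(n)\sqrt{1+K}$. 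This is exactly why cutoff functions with two-sided Laplacian bounds under a Ricci lower bound require a PDE-based construction (the Cheeger--Colding construction mentioned in Remark 2.4), not composition with the distance function. A further warning sign: if your construction did work, it would give $|f-d(p,\cdot)|\le C(n)/\sqrt{1+K}$ with constants independent of the volume term, a strictly stronger conclusion than the lemma, and your account of where $v$ enters (``absorbed from the Bishop--Gromov chain'') has no actual mechanism.

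The paper's proof (following Schoen--Yau) circumvents the problem by generating the distance-like function from a solution of an elliptic equation rather than from $d$ itself: solve $\Delta h_R=\lambda h_R$ on the annulus $B(p,R+1)\backslash B(p,\tfrac12)$ with boundary values $1$ and $0$, prove exponential decay $h_R^2(x)\le \bar C e^{-\sqrt{1+K}\,r(x)}$ via an exponentially weighted $L^2$ bound, the Harnack inequality and volume comparison, and set $f_R=\frac{1}{\sqrt{1+K}}\bigl(-(1-\eta)\log h_R^2+\log\bar C\bigr)$. Then $\Delta f_R=\frac{2}{\sqrt{1+K}}\bigl(-\lambda+|\n\log h_R|^2\bigr)$ is bounded on both sides by the Cheng--Yau gradient estimate, the decay of $h_R$ gives $f_R\ge r$, and the volume ratio enters only through $\log\bar C$, which is the source of the additive term $\frac{C(n)+v}{\sqrt{K+1}}$ in the upper bound; a rescaling to $\tilde g=(1+K)g$ pins down the $K$-dependence, and $R=\infty$ is handled by letting $R_k\to\infty$ and using elliptic regularity. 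If you want to salvage a covering/partition-of-unity approach, you would have to replace each $\tilde\phi_i$ by a Cheeger--Colding-type cutoff with genuine two-sided Laplacian control, at which point you are again solving PDEs locally and the bookkeeping of constants is no simpler than the paper's argument.
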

\begin{proof}
Let\rq{}s first consider the case $R< \infty$.

Let $\lambda>0$ be a constant to be determined later. For each finite $R > 1$, the following Dirichlet problem has a unique solution denoted as $h_R$.  
\[
\begin{cases}
\Delta h_R = \lambda h_R, & on \quad B(p,R+1) \backslash B(p, \frac{1}{2}); \\
h_R(x) = 1, & x \in \partial B(p,\frac{1}{2}); \\
h_R(x) = 0, & x \in \partial B(p,R+1).
\end{cases}
\]
Let $r(x) = d(p,x)$ and denote $B_R = B(p,R)$. By the maximum principle we know that $0< h_R < 1$ in the interior of $B_{R+1} \backslash B_{\frac{1}{2}}$. By the Cheng-Yau gradient estimate we have 
\[
|\n h_R(x) | \leq C(n)\sqrt{1 + K + \lambda}, \quad x \in \partial B_1.
\]
Let $A$ be a constant to be determined later. Using integration by parts and the Cauchy-Schwarz inequality, we can calculate 
\[
\begin{split}
\lambda \int_{B_{R+1}\backslash B_1} e^{Ar}h_R^2 = & \int_{B_{R+1}\backslash B_1}  e^{Ar} h_R \Delta h_R \\
= & \int_{B_{R+1}\backslash B_1}  - e^{Ar} |\n h_R|^2 - A e^{Ar}h_R \langle \n r, \n h_R\rangle - e^A \int_{\partial B_1}  h_R\frac{\partial h_R}{\partial \nu}\\
\leq &  \frac{A^2}{4} \int_{B_{R+1}\backslash B_1}  e^{Ar} h_R^2 + e^A \int_{\partial B_1}  |\n h_R|.\\
\end{split}
\]
Let $\lambda = \frac{A^2}{4} + 1$, then we have
\[
\int_{B_{R+1} \backslash B_1} e^{Ar} h_R^2 \leq  e^A \int_{\partial B_1} |\n h_R| \leq  e^A |\partial B_1| C(n)\sqrt{1 + K + \lambda}: = \tilde{C}.
\]
For any $x \in B_{R} \backslash B_2$, $y \in B(x,1)$, we have $r(y) \geq r(x) -1$, and
\[
\int_{B(x,1)} h_R^2 \leq \tilde{C}e^{_A} e^{-Ar(x)}.
\]
By the Harnack inequality we have
\[
h_R(x) \leq C(n)e^{C(n)\sqrt{K+\lambda}} h_R(y),  \quad y \in B(x, 1).
\]
By the volume comparison theorem we can show that 
\[
Vol B(x,1) \geq c(n)|B_1|e^{-C(n)\sqrt{K +1} r(x) }.
\]
Hence
\[
h_R^2(x) \leq e^A |\partial B_1| C(n)\sqrt{1 + K + \lambda} |B_1|^{-1} e^{(C(n)\sqrt{K +\lambda} +C(n)\sqrt{K +1} - A) r(x)}, 
\]
where $ x \in B_{R} \backslash B_2 $.

Recall that $\lambda = \frac{A^2}{4} +1$, by elementary inequality,
\[C(n)\sqrt{K + \lambda} \leq C(n)\sqrt{K} + C(n)\sqrt{\lambda} \leq C(n)\sqrt{K} + C(n)^2 + A/2,\]
we can choose $A = \frac{4}{3} ( 3 C(n)\sqrt{K +1} + C(n)^2 +1)$, then
\[
C(n)\sqrt{K +\lambda} +C(n)\sqrt{K +1} - A \leq - \sqrt{1+K}.
\]
Hence by choosing a larger $C(n)$, we have 
 \[
h^2_R(x) \leq C(n)e^{C(n)\sqrt{1+K}} \frac{|\partial B_1|}{ |B_1| }e^{- \sqrt{1+K}r(x)} : =\bar{C} e^{-\sqrt{1+K} r(x)}, \quad x \in B_{R} \backslash B_2. 
\]
Let $\eta$ be a cut-off function on $M$, such that $\eta = 1 $ on $B_{1}$, $0 < \eta < 1$ on $B_{2}\backslash B_1$ and  $\eta = 0$ on $M \backslash B_{2}$.  Define
\[
f_R = \frac{1}{\sqrt{1+K}}\left( -(1-\eta) \log h_R^2 + \log \bar{C}\right). 
\]
By the estimate of $h_R$, when $ r(x) >2$ we have $f(x) \geq r(x)$.

By Cheng-Yau gradient estimate and the choice of $\lambda$, we have
\[
|\n f_R| \leq \frac{2}{\sqrt{1+K}} |\n \log h_R| \leq C(n) \quad on \quad B_R \backslash B_2.
\]
By maximum principle $h_R > h_3>0$ on $B_3 \backslash \bar{B_1}$, hence
\[
f_R|_{\partial B_2} < \frac{1}{\sqrt{1+K}}\left( - 2\log h_3 |_{\partial B_2} + \log \bar{C}\right).
\]
Then the gradient estimate for $f_R$ implies
\[ 
f_R(x) \leq f_R|_{\partial B_2}(x) + C(n) (r(x) -2) \leq \frac{- 2\log \inf_{\partial B_2}h_3+\log \bar{C}}{\sqrt{1+K}}  + C(n ) r(x), \quad x \in B_R \backslash B_2.
\]
To estimate $- \log \inf_{\partial B_2}h_3$, assume $Ric(x) \geq - (n-1)k$ for $k\geq 0$, $x\in B_3\backslash B_{1/2}$,  suppose $\psi(r)$ solves the ODE:
\[
\psi^{\prime \prime} + (n-1)\sqrt{k} \coth \sqrt{k}r \psi^\prime = \lambda \psi, 
\]
\[
\psi(\frac{1}{2}) = 1, \quad \psi(3) = 0.
\]
By the maximum principle $\psi(r) > 0$ for $\frac{1}{2} < r < 3$ and $\psi$ is a nonincreasing function. By the Laplacian comparison theorem the transplantation $\psi(r(x))$ is a subsolution of the equation
\[
\Delta \psi(r(x)) \geq \lambda \psi(r(x)).
\]
Hence by the maximum principle we have 
\[
\inf_{\partial B_2} h_3 \geq \psi(2) >0. 
\]
Clearly the value of $\psi(2)$ depends only on $n$ and $k$ and $\lambda$, hence on $n$ and $K$. So we have
\[
r(x) \leq f_R(x) \leq C(n) r(x) + \hat{C}(n,K) + \frac{\log(|\partial B_1| / |B_1|)}{\sqrt{1+K}}, \quad 2 \leq r(x) \leq R.  
\]

To control the Laplacian, we can calculate when $r(x) > 2$ that
\[
\Delta f_R =\frac{2}{\sqrt{1+K}} \left( -\frac{\Delta h_R}{h_R } + |\n \log h_R|^2 \right)= \frac{2}{\sqrt{1+K}} \left(-\lambda + |\n \log h_R|^2 \right),
\]
which is bounded by a constant $C(n)\sqrt{1+K}$ by the gradient estimate and the choice of $\lambda$. 

It remains to clarify the dependence of the constant $\hat{C}(n,K)$ on $K$. To do this, we can use a scaling argument. Let $\tilde{g} = (1+K)g$, then we have $Ric(\tilde{g}) \geq -1$. Denote the distance function w.r.t. $\tilde{g}$ as $\tilde{r}(x)$, then $\tilde{r}(x) = \sqrt{1+K} r(x)$. By the above argument we can find a function $\tilde{f}_{\sqrt{1+K}R}$ such that
\[
\tilde{r}(x) \leq \tilde{f}_{\sqrt{1+K}R}(x) \leq C(n) \tilde{r}(x)+ \hat{C}(n,1) + \log(|\partial \tilde{B}_1| / |\tilde{B}_1|) ,
\]
\[
|\tilde{\Delta} \tilde{f}_{\sqrt{1+K}R} (x)| \leq C(n).
\]
Note that
\[
\log(|\partial \tilde{B}_1| / |\tilde{B}_1| ) = \log((\sqrt{1+K})^{n-1}|\partial B_{\frac{1}{ \sqrt{1+K}} } | ) - \log(  (\sqrt{1+K})^n|  B_{\frac{1}{ \sqrt{1+K}}}| ),
\]
by the comparison theorem, the above only depend on $n$ and a lower bound of the volume ratio $ (\sqrt{1+K})^n|  B_{\frac{1}{ \sqrt{1+K}}}|$.

Now we define the distance-like function by rescaling,
\[ 
f_R(x) = \frac{1}{\sqrt{1+K}} \tilde{f}_{\sqrt{1+K}R}(x), 
\]
it is then direct calculation to check that $f_R(x)$ satisfies the desired properties.

In the case $R=\infty$, we can take a sequence of finite numbers $R_k \to \infty$ as $k \to \infty$, and the corresponding $h_{R_k}$ is a monotone increasing sequence of uniformly bounded functions which converge to a function $h$ smoothly on any compact set, where the smooth convergence is guaranteed by elliptic regularity. The above estimates for $h_R$ holds for $h$ with the same constants, thus we can define $f$ similarly.
\end{proof}
Alternatively we can restate the above lemma in the following way, which will be convenient to use in the following discussion.
\begin{lem} \label{distance-like function lemma 2}
Suppose $Ric \geq -K$ on a geodesic ball $B(p,R+1) \subset M $ as a proper subset, where $R_0 < R \leq \infty$,  then there exists a smooth function $f$ defined on $B(p,R)$ such that 
\[
d(p,x) \leq f(x) \leq \Lambda(n) d(p,x), \]
\[ |\n f(x)| \leq \Lambda(n), \quad |\Delta f (x)| \leq \Lambda(n)\sqrt{1+K},
\]
for $x \in B(p,R)\backslash B(p, R_0)$, where $R_0 = \frac{C(n) + v}{\sqrt{1+K}}$, $v = \abs{ \ln \left( (\sqrt{1+K})^n|  B(p, 1/\sqrt{1+K})| \right) }$, $\Lambda(n)$ is a constant depending only on $n$. In the case $R= \infty$, the function $f$ is defined on the entire manifold. 
\end{lem}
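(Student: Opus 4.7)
The plan is to derive Lemma 2.2 directly from Lemma 2.1 by absorbing the additive constant in the upper bound for $f$ into the multiplicative constant, at the price of restricting the region where the estimate is claimed.

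First, I invoke Lemma 2.1 on $B(p, R+1)$ (or on all of $M$ in the case $R = \infty$) to obtain a smooth function $f$ on $B(p,R)$ satisfying the three bounds listed there. The key inequality is the upper bound
\[
f(x) \leq C(n) d(p,x) + \frac{C(n)+v}{\sqrt{1+K}}.
\]
The statement of Lemma 2.1 asserts this for $d(p,x) \geq 2$, but tracking the rescaling argument at the end of its proof (where one passes to $\tilde g = (1+K) g$, applies the bound for $\tilde r(x) \geq 2$, and transports back) shows that the inequality in fact holds on the larger annulus $d(p,x) \geq 2/\sqrt{1+K}$, with exactly the same additive constant after renormalization.

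Next, I define $R_0 := (C(n) + v)/\sqrt{1+K}$, taking the constant $C(n)$ in this definition to be at least the one appearing in the additive term of Lemma 2.1 and at least $2$. Since $v \geq 0$, this forces $R_0 \geq 2/\sqrt{1+K}$, so the annulus $B(p,R) \setminus B(p, R_0)$ sits inside the region on which the Lemma 2.1 bounds hold. For any $x$ in this annulus, $d(p,x) \geq R_0 = (C(n)+v)/\sqrt{1+K}$, and therefore
\[
f(x) \leq C(n) d(p,x) + d(p,x) = (C(n)+1) d(p,x).
\]
Setting $\Lambda(n) := C(n) + 1$ (or the maximum over the relevant constants appearing in Lemma 2.1) delivers the desired form $f(x) \leq \Lambda(n) d(p,x)$; the lower bound $d(p,x) \leq f(x)$ and the gradient and Laplacian bounds transfer verbatim from Lemma 2.1 and are already of the required shape.

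The only point demanding care is the radius bookkeeping: choosing $C(n)$ large enough in the definition of $R_0$ so that $R_0$ dominates both the additive constant $(C(n)+v)/\sqrt{1+K}$ and the lower radius $2/\sqrt{1+K}$ on which Lemma 2.1's estimates are available after rescaling. Beyond that, the argument is a one-line absorption. The hypothesis $R_0 < R$ simply guarantees that the annulus $B(p,R) \setminus B(p,R_0)$ is nonempty, and the case $R = \infty$ is inherited from the corresponding case in Lemma 2.1.
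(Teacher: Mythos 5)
Your proposal is correct and is exactly the derivation the paper intends: Lemma 2.2 is presented as a restatement of Lemma 2.1, and the content is precisely your absorption of the additive term $\frac{C(n)+v}{\sqrt{1+K}}$ into the multiplicative constant once $d(p,x)\geq R_0$, together with the observation that the rescaling step in the proof of Lemma 2.1 pushes the inner radius of validity down to $2/\sqrt{1+K}\leq R_0$. The gradient and Laplacian bounds carry over verbatim, so nothing further is needed.
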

\begin{rem}
  For any $l > 0$, we can scale the metric by a factor $(\frac{R_0}{l})^2$, apply the above lemma, then scale back. By this scaling argument, we can let $f$ satisfy the above estimates outside $B(p, l)$ for any $l> 0$ instead of $R_0$, however, the upper bound of $\Delta f$ will be dependent also on $v$:
  \[|\Delta f| \leq l^{-1}R_0 \Lambda \sqrt{1+K}.\] 
\end{rem}

\subsection{Construction of cut-off functions}\label{construction of cut-off function}
Suppose $Ric(x) \geq -K(r(x))$, where $K(r)$ is some nondecreasing function of $r$. Then by Lemma \ref{distance-like function lemma 2} there is a constant $\Lambda$ depending on $n$, and a constant $R_0$ depending on the geometry of $B(p,1)$, such that for any $R \geq \Lambda R_0$ and $\rho> 0$, we have a  distance-like function on $B(p,R+\rho)$ satisfying
\begin{equation}\label{equivalence to distance}
r(x) \leq f(x) \leq \Lambda r(x),  \quad when \quad R_0 < r(x) < R+ \rho.
\end{equation}
Note that the construction of $f$ actually depends on $R+\rho$. On $B(p, R_0)$, for the time being $f$ can be defined arbitrarily so that $0 \leq f(x) \leq \Lambda R_0$. 

We\rq{}ll use $f(x)$ instead of the distance function in the following discussion, and we denote
\begin{equation}\label{regularized ball}
D_r = f^{-1}([0, r]).
\end{equation}
By (\ref{equivalence to distance}) we have the relation
\begin{equation}\label{equivalence of balls}
B(p,\Lambda^{-1}r) \subset D_r \subset B(p,r), \quad for \quad \Lambda R_0 < r < R + \rho. 
\end{equation}
Let $\eta(r)$ be a smooth function such that $\eta(r) = 1$ when $ r \leq 1$, $0 < \eta(r) < 1$ when $1< r < 2$ and $\eta(r) = 0$ when $r\geq 2$, and $- 2 < \eta^\prime \leq 0$, $|\eta ^{\prime \prime }| \leq 10$. 

For any $k > 0$ and $\rho > 0$, we define a cut-off function
\begin{equation}\label{cut-off function}
\phi(x) = \eta^k ( 1+\rho^{-1}(f(x)- R)),
\end{equation}
which satisfies
\[
\phi(x) = \begin{cases}
1 , & x \in D_R;\\
\in [0,1], & x \in D_{R+\rho}\backslash  D_R;\\
0,  & x \in M\backslash D_{R+ \rho}.
\end{cases}
\]
By (\ref{equivalence of balls}) we have $\phi = 1$ on $B_{\Lambda^{-1}R}$ and $\phi = 0$ on $M \backslash B_{R+\rho}$. From direct calculation and Lemma \ref{distance-like function lemma 2}, we can verify that $\phi$ has well-controlled gradient and laplacian: 
\begin{equation}\label{gradient of the cut-off function}
|\n \phi| \leq  \rho^{-1}k \eta^{k-1} |\eta^{\prime}| |\n f| \leq \frac{C(n)k}{\rho} \phi^{1-1/k} .
\end{equation}

\begin{equation}\label{Laplacian of the cut-off function}
\begin{split}
|\Delta \phi| =& | \rho^{-2}k(k-1)\eta^{k-2} (\eta^\prime)^2 |\n f|^2 + \rho^{-2}k\eta^{k-1}\eta^{\prime\prime} |\n f|^2 + \rho^{-1}k \eta^{k-1} \eta^{\prime} \Delta f| \\
\leq & \frac{C(n)k^2\sqrt{1+K(R+\rho)}}{\rho} \phi^{1-2/k},
\end{split}
\end{equation}
when $\rho \geq 1$, and 
\[
  |\Delta \phi| \leq \max \set{\frac{C(n)k^2\sqrt{1+K(R+\rho)}}{\rho} \phi^{1-2/k}, \quad \frac{C(n)k^2}{\rho^2} \phi^{1-2/k} }
\]
when $\rho < 1$.

\begin{rem}
Note that we assumed $R> R_0$ in the above construction, this assumption can be dropped using the distance-like function constructed in the next subsection, where the bound for the Laplacian will be in a slightly different form.
\end{rem}
\begin{rem}
 There is a different construction of cut-off functions with controlled Laplacian by Cheeger and Colding \cite{CC1996}, assuming a Ricci lower bound, which does not require largeness of the radius.
\end{rem}

\subsection{Globally defined distance-like functions.}
On every complete Riemannian manifold, there exists some nonnegative nondecreasing function $K(r)$, such that $Ric(x) \geq - K(r)$ for $x \in B(p,r)$, where $p$ is a fixed point. In this section, we first construct a globally defined distance-like function whose Laplacian is controlled by the function $K(r)$, then we will consider the special case that $K(r) \equiv 0$. 

\begin{lem} \label{global distance-like function}
Let $M$ be a complete noncompact Riemannian manifold with $Ric(x) \geq - K(r(x))$, where $K(r)\geq 0$ is a nondecreasing function, $r(x)$ is the distance from $x$ to a fixed point $p$. There exists a function $f$ satisfying
\[
r(x) \leq f(x) \leq \Lambda r(x), \quad |\n f(x)| \leq \Lambda, \]
for all $x \in M \backslash \set{p}$, and 
\[
\quad |\Delta f (x)| \leq \frac{\Lambda R_0 \sqrt{1+K(C(n) R_0)} }{r( x)} \quad for \quad x \in B(p, R_0),
\]
\[  
\quad |\Delta f (x)| \leq \Lambda\sqrt{1+K(4 r(x))} \quad for \quad x \in M\backslash B(p, R_0),
\]
where the constant $\Lambda$ depends only on $n$, the constant $R_0$ depends on $n$, $K(1)$ and $Vol (B(p,1))$ in the form (\ref{formula for R_0}).
\end{lem}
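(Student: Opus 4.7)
The construction is a two-scale dyadic gluing of the distance-like functions produced by Lemma~\ref{distance-like function lemma 2} and the scaling remark that follows it. Since $K(r)$ may grow unboundedly in $r$, one cannot apply that lemma once on all of $M$; instead I patch together one function per dyadic shell in the exterior, and separately use the scaling remark at shrinking scales to cover the punctured ball near $p$. Throughout, $R_0$ will be fixed large enough that the inner radius in Lemma~\ref{distance-like function lemma 2}, of the form $(C(n)+v)/\sqrt{1+K}$ with $v$ the logarithmic volume term, is dominated by $R_0$ when the input curvature bound is $K(R_0)$. Bishop--Gromov applied to $B(p,1)$ bounds $v$ in terms of $n$, $K(1)$, and $\mathrm{Vol}(B(p,1))$, which gives the claimed dependence of $R_0$.

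\emph{Outer region $r(x)\ge R_0$.} Set $r_i:=2^iR_0$ for $i\ge 0$. For each $i$, apply Lemma~\ref{distance-like function lemma 2} on $B(p,r_{i+2}+1)$ with the uniform bound $K=K(r_{i+2}+1)$, obtaining $f_i$ on $B(p,r_{i+2})$ with $r\le f_i\le\Lambda r$, $|\n f_i|\le\Lambda$, and $|\Delta f_i|\le\Lambda\sqrt{1+K(r_{i+2}+1)}$ on $B(p,r_{i+2})\setminus B(p,R_0)$; monotonicity of $K$ ensures the inner radius of the lemma is $\le R_0$ for every $i$. Take a smooth partition of unity $\{\phi_i\}_{i\ge 0}$ with $\phi_i\equiv 1$ on $\{r_i\le r\le r_{i+1}\}$, supported in $\{r_{i-1}\le r\le r_{i+2}\}$ (with the obvious adjustment at $i=0$), whose gradient and Laplacian are controlled using the global distance-like functions $f_i$ themselves, and set $f_{\mathrm{out}}:=\sum_i\phi_i f_i$.

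\emph{Inner region $0<r(x)<R_0$.} Apply Lemma~\ref{distance-like function lemma 2} on $B(p,C(n)R_0)$ with $K_\ast:=K(C(n)R_0)$. For each $s\in(0,R_0)$, the scaling remark produces $g_s$ on $B(p,R_0)\setminus B(p,s)$ satisfying $r\le g_s\le\Lambda r$, $|\n g_s|\le\Lambda$, and $|\Delta g_s|\le s^{-1}\Lambda R_0\sqrt{1+K_\ast}$. Put $s_j:=2^{-j}R_0$ and choose a partition of unity $\{\psi_j\}_{j\ge 0}$ with $\psi_j\equiv 1$ on $\{s_{j+1}\le r\le s_j\}$, supported in $\{s_{j+2}\le r\le s_{j-1}\}$, and define $f_{\mathrm{in}}:=\sum_j\psi_j\, g_{s_{j+2}}$. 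Finally glue $f_{\mathrm{in}}$ and $f_{\mathrm{out}}$ across $\{r=R_0\}$ by one further cutoff, after extending each slightly into a common overlap strip.

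\emph{Verification and main obstacle.} The comparison $r\le f\le\Lambda r$ is immediate from the fact that the weights are nonnegative and sum to one while each local piece satisfies $r\le\cdot\le\Lambda r$. For the derivatives, using $\sum_k\n\phi_k=0$ and $\sum_k\Delta\phi_k=0$ at a point $x$ in the shell of index $i_0$, one writes
\[
\n f=\sum_k\phi_k\n f_k+\sum_k(\n\phi_k)(f_k-f_{i_0}),\qquad \Delta f=\sum_k\phi_k\Delta f_k+2\sum_k\n\phi_k\cdot\n f_k+\sum_k(\Delta\phi_k)(f_k-f_{i_0}),
\]
and analogously for the inner partition. Since $|f_k-f_{i_0}|\le 2\Lambda r(x)$ on the support of $\phi_k$, the cross terms are absorbed: they are $O(\Lambda)$ for the gradient and bounded by the same order as the main Laplacian term for $\Delta f$. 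The principal obstacle is the constant bookkeeping needed to ensure that the inner radius from Lemma~\ref{distance-like function lemma 2} at each scale $r_{i+2}$ remains uniformly $\le r_i$; monotonicity of $K$ makes the worst case $i=0$, and that reduces precisely to the stated dependence of $R_0$ on $n$, $K(1)$, and $\mathrm{Vol}(B(p,1))$.
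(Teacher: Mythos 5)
Your plan is essentially the paper's own proof: glue the Schoen--Yau functions of Lemma \ref{distance-like function lemma 2} over geometrically growing annuli in the exterior and over geometrically shrinking scales near $p$ (via the scaling remark, i.e.\ rescaled metrics), using cut-offs built from the distance-like functions themselves so their Laplacians are controlled, and fixing $R_0$ by volume comparison on $B(p,1)$, which is exactly the dependence in (\ref{formula for R_0}). The only adjustment needed is in your dyadic bookkeeping: since the sublevel sets of the $f_i$ are comparable to metric balls only up to the factor $\Lambda(n)$, cut-offs of the form $\eta(f_i)$ cannot transition inside annuli of ratio $2$ when $\Lambda>2$, so the scales must be spaced by a factor of order $2\Lambda$ (the paper takes $R_i=(2\Lambda)^iR_0$ and $\rho_i=(2\Lambda)^{-i}$), which only changes the constants in the final bounds.
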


\begin{proof}
By the volume comparison theorem, there are constants depending on $n$ and the Ricci lower bound $Ric \geq - K(1)$ on $B(p,1)$, such that
\[
e^{C(n)K(1)} \geq (1+K(r))^\frac{n}{2} Vol(B(p, (1+K(r))^{-\frac{1}{2}})) \geq e^{- c(n) K(1)} Vol(B(p,1)),
\]
for any $r\geq 0$. So by the explicit fomula for $R_0$ in Lemma \ref{distance-like function lemma 2}, we can choose the constant $R_0$ depending on $n, K(1)$ and $Vol(B(p,1))$ when we apply Lemma \ref{distance-like function lemma 2}, in particular, we can take
\begin{equation}\label{formula for R_0}
R_0 = \frac{C(n) (1+ K(1) + |\ln Vol(B(p,1))| ) }{\sqrt{1+ {K(1)}}}. 
\end{equation}

\textbf{Step 1:}
Let $R_i = (2\Lambda )^i R_0$, $i = 0, 1, 2, ...$, where $R_0$ is from Lemma \ref{distance-like function lemma 2}, Let $B_i = B(p, R_i)$. For each $i$ we can construct a cut-off function $\phi_i$ as in section \ref{construction of cut-off function} with $k = 2$ and $\rho = \Lambda R_i$, such that $\phi_i =1$ on $B_i$, $\phi_i \geq 0$, $\phi_i = 0$ on $M\backslash B_{i+1}$, and 
\[
|\n \phi _i | \leq \frac{C(n)}{R_i}, \quad |\Delta \phi_i| \leq \frac{C(n) \sqrt{1+ K(R_{i+1})}}{R_i}.
\]

Let $f_i$ be the distance function on $B_{i+1}$ from Lemma \ref{distance-like function lemma 2}, so 
\[
d(p,x) \leq f_i(x) \leq \Lambda d(p,x), \]
\[ |\n f_i(x)| \leq \Lambda, \quad |\Delta f_i (x)| \leq \Lambda\sqrt{1+K(R_{i+1})}.
\]
Let $\psi_1 = \phi_1$, $\psi_i = \phi_{i} - \phi_{i-1}$, $i = 2,3...$, note that $\psi_i$ is supported on $B_{i+1} \backslash B_{i-1}$. Define
\[
f = \sum_{i =1}^\infty \psi_i f_i.
\]
For each $x \in M \backslash B(p, R_0)$, there is an $i > 0$ such that $R_i \leq d(p,x) \leq R_{i+1}$. Hence $\psi_j(x) = 0$ unless $j = i$ or $j=i+1$. So the RHS above is a finite sum for each $x$, hence $f$ is well defined. Moreover, for this $x$ we have
\[
f(x) = \psi_i(x) f_i(x) + \psi_{i+1}(x) f_{i+1}(x),  
\]
and $\psi_{i}(x)+\psi_{i+1}(x) = 1$, hence 
\[d(p,x) \leq f(x) \leq \Lambda d(p,x).\]
In particular, $f(x) \leq \Lambda R_{i+1}$.

For the gradient and Laplacian of $f$, we compute
\[
|\n f(x)| = |\n \sum_{j=i, i+1} \psi_j(x) f_j(x)|\leq \sum_{j=i, i+1} \left( |\n \psi_j| f_j + \psi_j |\n f_j|\right)(x) \leq C(n),
\]  
\[
  \begin{split}
|\Delta f|(x) = & | \Delta (\psi_i f_i + \psi_{i+1} f_{i+1})(x)| \\
& \leq\sum_{j = i, i+1} |\Delta \psi_j | f_j + \psi_j |\Delta f_j| + 2 |\psi_j | |\n f_j|\\
& \leq C(n)\sqrt{1+ K(R_{i+2})} \leq C(n)\sqrt{1+K(4 d(p,x))},
  \end{split}
\]
for some constant $C(n)$. Hence, by redefining the constant $\Lambda(n)$, we have shown that
there exists a function $f$ satisfying
\[
d(p,x) \leq f(x) \leq \Lambda d(p,x), \]
\[ |\n f(x)| \leq \Lambda, \quad |\Delta f (x)| \leq \Lambda\sqrt{1+K(4 d(p,x))},
\]
for $x \in M\backslash B(p, R_0)$.

\textbf{Step 2:}
The second step is to extend the definition of $f$ to $B(p, R_0)$.

For any $0 < \rho < 1$, we can scale the metric $g$ and define $g_\rho = \rho^{-2}g$. In the following we will use $\n_{g_\rho}$ to denote the gradient w.r.t. $g_\rho$, and omit the subscript when the metric is $g$. 
  
By scaling we have $Ric(g_\rho) \geq - \rho^2 K( r)\geq - K(r)$, and by the volume comparison theorem 
  $$Vol_{g_\rho}(B_{g_\rho}(p,1)) = \rho^{-n} Vol_g(B_g(p,\rho)) \geq c(n , K(1)) Vol_g(B_g(p,1)) .$$
From the construction in Step 1 there is a function $\tilde{f}_\rho$, such that 
  \[ d_{g_\rho} (p, x) \leq \tilde{f}_\rho(x) \leq \Lambda d_{g_\rho}(p, x),\]
  \[
  |\n_{g_\rho} \tilde{f}_\rho| \leq \Lambda, \quad |\Delta_{g_\rho} \tilde{f}_\rho| \leq \Lambda \sqrt{1+ K(4r)},  
  \]
  when $d_{g_\rho}(p, x) \geq {R}_0$. Let $f_\rho (x) = \rho \tilde{f}_\rho (x)$, then we have
  \[ d (p, x) \leq f_\rho (x) \leq \Lambda d(p, x),\]
  \[
  |\n f_\rho|(x) \leq \Lambda, \quad |\Delta f_\rho| \leq \frac{\Lambda \sqrt{1+ K(4r)}}{ \rho},  
  \]
  when $d_{g}(p, x) \geq {R}_0 \rho$. 

 For $i = 0,1,2,...$, let $\rho_i =  (2\Lambda )^{-i}$,  let $f_i = f_{\rho_i}$ as constructed above. Use $f_i$ to define cut-off functions $0\leq \phi_i \leq 1$ as in section \ref{construction of cut-off function}, let $\phi_i = 1$ on $B_{R_0 \rho_{i-1}}$, $\phi_i = 0$ outside of $B_{R_0 \rho_{i-2}}$, and 
  \[
  |\n \phi_i | \leq \frac{C(n)}{R_0 \rho_i}, \quad |\Delta \phi_i| \leq \frac{C(n)\sqrt{1+K(4R_0\rho_{i-2})}}{R_0 \rho_i ^2}.  
  \]  
  Define $\psi_0 = 1- \phi_0$, $\psi_i = \phi_i - \phi_{i+1}$, then $\psi_i \geq 0$ is supported on $B_{R_0 \rho_{i-2}} \backslash B_{R_0 \rho_{i}}$. 
  Define $f = \sum \psi_{i} f_i$, we can check similarly as in Step 1 that
  \[
  d(p,x) \leq f(x) \leq \Lambda d(p,x),  
  \]
  \[
  |\n f|(x) \leq C(n), \quad |\Delta f|(x) \leq \frac{C(n)R_0 \sqrt{1+ K(C(n)R_0)}}{d(p,x)} ,  
  \]
  for any $x \in B(p, (2\Lambda)^2 R_0) \backslash \set{p}$. Note that $f = f_0$ on $M \backslash B(p, (2\Lambda)^2 R_0)$, hence the lemma is proved after redefining $\Lambda(n)$ and $R_0$ by multiplying a suitable dimensional constant.

\end{proof}

On a manifold with nonnegative Ricci curvature, we can construct a distance-like function, whose Laplacian has alomst linear decay.
\begin{lem}\label{global distance-like function on Ricci nonnegative manifolds}
  Let $M$ be a complete noncompact Riemannian manifold with $Ric(x) \geq 0$ for all $x\in M$, let $r(x)$ be the distance from $x$ to a fixed point $p$. There exists a function $f$ satisfying
\[
r(x) \leq f(x) \leq \Lambda r(x), \]
\[ |\n f(x)| \leq \Lambda, \quad |\Delta f (x)| \leq \frac{C(n)(1- \ln \nu_p(r(x))}{r(x)},
\]
for $x \in M\backslash \set{p}$, where $\nu_p(r) = \frac{Vol B(p,r)}{\omega_n  r^n}$, $\omega_n$ is the volume of the unit ball in the Euclidean space, and the constant $\Lambda$ depends only on the dimension $n$. 
\end{lem}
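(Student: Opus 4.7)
The plan is to adapt the two-step gluing construction of Lemma \ref{global distance-like function}, exploiting the scale invariance of the hypothesis $Ric\geq 0$ to sharpen the Laplacian bound at every dyadic scale. Fix radii $R_i=(2\Lambda)^i R_\ast$ for $i\in\mathbb{Z}$, where $R_\ast$ is a fixed base scale. For each $i$, rescale the metric by $g_i=R_i^{-2}g$. Since $Ric\geq 0$ is preserved under such scaling and $|B_{g_i}(p,1)|_{g_i}=\omega_n\nu_p(R_i)$, applying Lemma \ref{distance-like function lemma 2} (with $K=0$) in the metric $g_i$ produces a distance-like function whose associated constant is $\tilde R_0^{(i)}=C(n)(1-\ln\nu_p(R_i))$. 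Using the remark after Lemma \ref{distance-like function lemma 2} to shift the validity region to $d_{g_i}(p,\cdot)\geq 1$ and then scaling back by $f_i:=R_i\tilde f_i$, I obtain functions $f_i$ on $M$ satisfying
\[
d(p,x)\leq f_i(x)\leq \Lambda d(p,x),\quad |\n f_i|\leq \Lambda,\quad |\Delta f_i|\leq \frac{C(n)(1-\ln\nu_p(R_i))}{R_i}
\]
on the region $\{d(p,x)\geq R_i\}$. This is the natural sharpening of the $f_i$ used in Lemma \ref{global distance-like function}.

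Next I would construct cut-offs $\phi_i$ via the formula in Section \ref{construction of cut-off function}, but using $f_i$ itself as the base distance-like function, with transition width $\rho=R_i$. The cut-off estimates then substitute $|\Delta f_i|\leq C(n)(1-\ln\nu_p(R_i))/R_i$ in place of the $\sqrt{1+K}$ factor, yielding
\[
|\n\phi_i|\leq \frac{C(n)}{R_i},\quad |\Delta\phi_i|\leq \frac{C(n)(1-\ln\nu_p(R_i))}{R_i^2},
\]
one power of $R_i$ sharper than the generic bound. Defining $\psi_i=\phi_i-\phi_{i-1}$ and $f=\sum_i\psi_i f_i$, at any point $x$ with $d(p,x)\sim R_i$ only $\psi_i,\psi_{i+1}$ are nonzero and their sum is $1$, so $d\leq f\leq \Lambda d$ and $|\n f|\leq \Lambda$ are immediate. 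For the Laplacian, I would expand
\[
\Delta f=\sum_j\bigl(f_j\Delta\psi_j+2\n\psi_j\cdot\n f_j+\psi_j\Delta f_j\bigr)
\]
and bound each of the three term-types by $C(n)(1-\ln\nu_p(R_i))/R_i$, using $f_j\leq \Lambda R_{i+1}$ to absorb the growth against the $1/R_i^2$ decay of $|\Delta\psi_j|$. Monotonicity of $\nu_p$ together with $r(x)\in[R_i,R_{i+1}]$ then converts this to $|\Delta f|(x)\leq C(n)(1-\ln\nu_p(r(x)))/r(x)$.

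For $x$ near $p$ I would repeat the argument at scales $\rho_i=(2\Lambda)^{-i}R_\ast\to 0$, exactly as in Step 2 of Lemma \ref{global distance-like function}. As $\rho_i\to 0$, $\nu_p(\rho_i)\to 1$, so $\tilde R_0^{(\rho_i)}$ stays bounded by a dimensional constant and the gluing yields $|\Delta f|\leq C(n)/r(x)$ near $p$, which is consistent with the claim since $1-\ln\nu_p\geq 1$.

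The main obstacle is the sharp cut-off bound $|\Delta\phi_i|\leq C(n)(1-\ln\nu_p(R_i))/R_i^2$; without the extra $1/R_i$ factor, the cross term $f_j\Delta\psi_j$ in the gluing contributes a non-decaying constant of size $\Lambda R_{i+1}/R_i$ and destroys the desired $1/r$ decay. The improvement is self-consistent: taking the already-improved $f_i$ as the base of the cut-off construction forces both the $|\n f_i|^2/\rho^2$ and the $|\Delta f_i|/\rho$ contributions to the cut-off Laplacian to scale like $1/R_i^2$, which is the decisive point of the argument.
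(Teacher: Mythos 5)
Your proposal is correct and is essentially the paper's own argument: dyadic rescaling of the metric (under which $Ric\geq 0$ is preserved and the Schoen--Yau constant becomes $C(n)(1-\ln\nu_p(R_i))$), followed by a partition-of-unity gluing whose cut-off functions are built from the rescaled distance-like functions themselves --- which is precisely what yields the crucial extra factor of $R_i^{-1}$ in $|\Delta\phi_i|$ that you correctly identify as the decisive point. The only difference is organizational: you run a single bi-infinite gluing over all scales $i\in\mathbb{Z}$, invoking the remark after Lemma \ref{distance-like function lemma 2} to shift the validity region to the unit ball at each scale, whereas the paper first performs the small-scale gluing (its Step 1) and then feeds that output into the large-scale gluing (its Step 2); this does not change the substance of the proof (the only cosmetic slips --- the pairing of $f_j$ with the support of $\psi_j$ needs a shift of one index, and the comparison $1-\ln\nu_p(R_{i+1})\leq C(n)(1-\ln\nu_p(r))$ for $r\in[R_i,R_{i+1}]$ uses $Vol\,B(p,R_{i+1})\geq Vol\,B(p,r)$ rather than monotonicity of the ratio $\nu_p$ --- are shared with or as harmless as in the paper's own write-up).
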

%\begin{rem}
%By a well-known result of Yau, we know that the volume of a geodesic ball on a complete manifold with nonnegative Ricci curvature has at least linear %growth, hence $|\ln \nu_p(r)|$ has at most logarithmic growth in $r$. It is bounded when the manifold has maximal volume growth.  
%\end{rem}
\begin{proof}
  Note that $Ric \geq 0$ is invariant under scaling of the metric, hence for any $\rho> 0$, we can scale the metric $g$ and define $g_\rho = \rho^{-2}g$, which still has nonnegative Ricci curvature. In the following we will use $\n_{g_\rho}$ to denote the gradient w.r.t. $g_\rho$, and omit the subscript when the metric is $g$.
  
  \textbf{Step 1:} This step is similar to Step 2 in the proof of Lemma \ref{global distance-like function}, we define $f$ on $B(p, R_0)$.
  
  For any $0< \rho < 1$, $g_\rho$ is a scale-up of the metric $g$, $Ric(g_\rho) \geq 0$, and 
  $$Vol_{g_\rho}(B_{g_\rho}(p,1)) \geq Vol_g(B_g(p,1)) =: v_0$$
   by the volume comparison theorem. By Lemma \ref{distance-like function lemma 2} there is $R_0(n, v_0)$ and a function $\tilde{f}_\rho$, such that 
  \[ d_{g_\rho} (p, x) \leq \tilde{f}_\rho(x) \leq \Lambda d_{g_\rho}(p, x),\]
  \[
  |\n_{g_\rho} \tilde{f}_\rho| \leq \Lambda, \quad |\Delta_{g_\rho} \tilde{f}_\rho| \leq \Lambda,  
  \]
  when $d_{g_\rho}(p, x) \geq R_0$. Let $f_\rho (x) = \rho \tilde{f}_\rho (x)$, then we have
  \[ d (p, x) \leq f_\rho (x) \leq \Lambda d(p, x),\]
  \[
  |\n f_\rho|(x) \leq \Lambda, \quad |\Delta f_\rho| \leq \frac{\Lambda}{ \rho},  
  \]
  when $d_{g}(p, x) \geq R_0 \rho$. 

  Then the construction is similar as in Lemma \ref{global distance-like function}. For $i = 0,1,2,...$, let $\rho_i = (2\Lambda )^{-i}$,  let $f_i = f_{\rho_i}$ as constructed above. Use $f_i$ to define cut-off functions $0\leq \phi_i \leq 1$ as in section \ref{construction of cut-off function}, let $\phi_i = 1$ on $B_{R_0 \rho_{i-1}}$, $\phi_i = 0$ outside of $B_{R_0 \rho_{i-2}}$, and 
  \[
  |\n \phi_i | \leq \frac{C(n)}{R_0 \rho_i}, \quad |\Delta \phi_i| \leq \frac{C(n)}{ R_0 \rho_i ^2}.  
  \]  
  Define $\psi_0 = 1- \phi_0$, $\psi_i = \phi_i - \phi_{i+1}$, then $\psi_i \geq 0$ is supported on $B_{R_0 \rho_{i-2}} \backslash B_{R_0 \rho_{i}}$. 
  Define $f = \sum \psi_i f_i$, we can check similarly as in the proof of Lemma \ref{global distance-like function} that
  \[
  d(p,x) \leq f(x) \leq \Lambda d(p,x),  
  \]
  \[
  |\n f|(x) \leq C(n), \quad |\Delta f|(x) \leq \frac{C(n)R_0}{d(p,x)} = \frac{C(n)(1 + |\ln v_0|)}{d(p,x)},  
  \]
  for any $x \in B(p, (2\Lambda)^2 R_0) \backslash \set{p}$, where we have used the explicit formula for $R_0$ in Lemma \ref{distance-like function lemma 2}, and $f = f_0$ on $M \backslash B(p, (2\Lambda)^2R_0)$. In particular 
  \[ |\n f| \leq C(n)\quad and \quad |\Delta f| \leq C(n)(1 + |\ln v_0|)\] outside of $B(p,1)$, rather than outside of a ball with radius depending on the volume ratio, this fact is important for the next step, where we scale down the metric and the volume of the rescaled unit balls may degenerate. 

  \textbf{Step 2:} 
  Now for any $\rho > 1$, $g_\rho$ is a scale-down of $g$. Note that the volume of the unit ball w.r.t. $g_\rho$ is 
  \[v_0(g_\rho) = vol_{g_\rho} B_{g_\rho}(p, 1) = \frac{vol_{g} B_g(p, \rho)}{\rho^n}  = \omega_n \nu_p(\rho),\]
  where $\nu_p(\rho)$ is defined in (\ref{definition of the volume ratio}).
    By step 1, there is a distance-like function $\hat{f}_\rho$ w.r.t $g_\rho$, s.t.
  \[
  d_{g_\rho}(p,x) \leq \hat{f}_\rho(x) \leq \Lambda d_{g_\rho}(p,x),  
  \]
  \[
  |\n_{\rho} \hat{f}_\rho|(x) \leq C(n), \quad |\Delta_{g_\rho} \hat{f}_\rho|(x) \leq C(n)(1 - \ln \nu_p(\rho)),  
  \]
  for any $x \in M \backslash B_{g_\rho}(p,1)$. Let $f_\rho = \rho \hat{f}_\rho$, then
  \[
  d(p,x) \leq f_\rho(x) \leq \Lambda d(p,x),  
  \]
  \[
  |\n f_\rho|(x) \leq C(n), \quad |\Delta f_\rho|(x) \leq \frac{C(n)(1 - \ln \nu_p(\rho))}{\rho},  
  \]
  for $x \in M \backslash B(p, \rho)$.
  
  Then take $\rho_i= (2\Lambda )^i $, $i = 0, 1, 2, ...$, define $f_i = f_{\rho_i}$, then we can glue up $f_i$ by the same method as in Lemma \ref{global distance-like function} to construct a distance-like function on $M$ satisfying the requirements of the Lemma. 

\end{proof}

\begin{rem}\label{rem: local distance-like function nonnegative Ricci}
  Although we assumed $Ric \geq 0$ on the entire $M$, it easy to see that the construction of Lemma \ref{global distance-like function on Ricci nonnegative manifolds} works if $Ric \geq 0$ on a geodesic ball with radius $> R_0$, which depends on $n$ and the volume of a unit ball centered at the base point. 
\end{rem}

\begin{rem}
In the case of $Ric \geq 0$ and maximal volume growth, there is an alternative choice of distance-like function. One can simply take $G^{\frac{1}{n-2}}$ as a distance-like function as in \cite{CM1997}, where $G$ is the positive Green's function, it follows from Cheng-Yau's gradient estimate that this function has bounded gradient and linearly decaying Laplacian. 
\end{rem}

\begin{rem} The functions $r(x)$, $f(x)$ and $K(r(x))$ defined above, and the constant $R_0$ may depend on a base point $p$, which is suppressed when there is no confusion. We will write $r_p(x)$, $f_p(x)$, $K_p(r_p(x))$ and $R_0(p)$ when we need to specify the base point. 
\end{rem}
%%%%%%%%%%%%%%%%%%%%%%%%%%%%%%%%%%%%%%%%%%%%%%%%%%%%%%%%%%%%%%%%%%%%%%%%%%

\section{$L^2$ estimates and a mean value inequality for the biharmonic heat equation}\label{section: l2 estimates and mean value inequality}
The main purpuse of this section is to develop a mean value inequality for the biharmonic heat equation. Let's first fix some notations. 
Let $f$ be the distance-like function based at a point $p$ as defined in Lemma \ref{global distance-like function} (or Lemma \ref{global distance-like function on Ricci nonnegative manifolds} if the Ricci curvature is nonnegative) and denote 
$$D_r = \set{f< r}, $$ 
$r > 0$. Let $R_0$ be the same constant as in Lemma \ref{global distance-like function}, WLOG we can take $R_0 \geq 1$.

\subsection{$L^2$ estimates for the biharmonic heat equation. }
\begin{lem}\label{lem: l2 estimates for poly-Laplacian}
Suppose $Ric \geq -K$ on $D_R$, let $u$ be a solution of the biharmonic heat equation on $D_R\times [0, T]$, then for $R>0$, $0< t \leq T$, $m = 0, 1, 2, ...$ we have
\[
\int_{D_{R/2}} |\Delta^m u|^2 (x, t)\leq \frac{C(n,m)\left( \max \set{R_0, R}^2 \max \set{1,K} R^{-4}T + 1\right)^{m+1}}{t^{m+1}} \int_0^t \int_{D_R} u^2.
\]
In the special case $K = 0$, we have 
\[
\int_{D_{R/2}} |\Delta^m u|^2 (x, t)\leq \frac{C(n,m)\left((1- \ln \nu_p(R))R^{-4}T + 1\right)^{m+1}}{t^{m+1}} \int_0^t \int_{D_R} u^2,
\]
for $R>0$ and $0< t \leq T$, $\nu_p(r)$ is the volume ratio of the geodesic ball with radius $r$ centered at the base point of $f$.
\end{lem}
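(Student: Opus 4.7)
The plan is to prove the lemma by induction on $m$ via a weighted $L^2$ energy identity. For the base case $m=0$, I choose a cut-off $\phi$ from Section \ref{construction of cut-off function} with $\phi\equiv 1$ on $D_{R/2}$, $\mathrm{supp}\,\phi\subset D_R$, and exponent parameter $k$ large enough that the $\phi^{1-1/k}$ and $\phi^{1-2/k}$ factors from (\ref{gradient of the cut-off function})--(\ref{Laplacian of the cut-off function}) are effectively harmless. Multiplying the biharmonic heat equation by $\phi^{2k}u$, integrating over $M$, and integrating by parts twice yields
\[
\frac{1}{2}\frac{d}{dt}\int\phi^{2k}u^2 + \int\phi^{2k}(\Delta u)^2 = -\int\bigl[u\,\Delta(\phi^{2k}) + 2\langle\nabla(\phi^{2k}),\nabla u\rangle\bigr]\Delta u.
\]
Absorbing the cross terms on the right into $\frac{1}{2}\int\phi^{2k}(\Delta u)^2$ via Cauchy--Schwarz (with a subsidiary integration-by-parts step needed for the $\nabla u$ factor, since only $|\nabla\phi|$ and $|\Delta\phi|$ are under pointwise control by the Section \ref{construction of cut-off function} construction) produces the energy inequality
\[
\frac{d}{dt}\int\phi^{2k}u^2 + c\int\phi^{2k}(\Delta u)^2 \leq C_k E\int_{D_R}u^2,
\]
with error coefficient $E \leq C\max\{R_0,R\}^2\max\{1,K\}R^{-4}$ in general (by Lemma \ref{distance-like function lemma 2}) and $E \leq C(1-\ln\nu_p(R))R^{-4}$ when $K=0$ (by the sharper Laplacian bound of Lemma \ref{global distance-like function on Ricci nonnegative manifolds}). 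The initial-data term is then eliminated by the standard $t$-weighting trick: from
\[
\frac{d}{dt}\Bigl(t\int\phi^{2k}u^2\Bigr) \leq \int\phi^{2k}u^2 + C_k E\,t\int_{D_R}u^2,
\]
integrating over $[0,t]$ gives $t\int_{D_{R/2}}u^2(t) \leq (1+C_k Et)\int_0^t\int_{D_R}u^2$, which is the case $m=0$.

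For the inductive step I exploit that $\Delta$ commutes with $\Delta^2$, so every iterate $v := \Delta^j u$ is itself a solution of the biharmonic heat equation. Retaining the dissipation term in the above energy inequality and integrating over a sub-interval $[t',t]$ gives the companion Caccioppoli estimate
\[
c\int_{t'}^t\int\phi^{2k}|\Delta v|^2 \leq \int\phi^{2k}v^2(t') + CE\int_{t'}^t\int v^2.
\]
I iterate on $m+1$ nested balls $D_{R/2}\subset D_{\rho_1}\subset\cdots\subset D_{\rho_m}=D_R$ and $m+1$ nested time intervals $[\tau_j,t]$ with $\tau_j = jt/(m+1)$, alternating the base case (to convert a pointwise-in-time $L^2$ bound on $\Delta^j u$ on a smaller ball into a time-integrated $L^2$ bound on a slightly larger one) with the companion Caccioppoli bound (to trade one power of $\Delta$ for a time-integrated norm on an enlarged ball), with the residual $\int\phi^{2k}v^2(t')$ from the Caccioppoli step itself reduced to a time-integrated form by applying the base case to $\Delta^{j-1}u$ on the earlier interval $[0,t']$. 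Each iteration accumulates a factor $C(1+ET)$ and one factor of $(m+1)/t$, so after $m+1$ stages one reaches the right-hand side $\int_0^t\int_{D_R}u^2$ with total prefactor $C(n,m)(1+ET)^{m+1}/t^{m+1}$.

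The main technical obstacle I anticipate is the mixed term $\int\phi^{2k-1}\langle\nabla\phi,\nabla u\rangle\Delta u$ in the base-case energy identity. Because the cut-off $\phi$ built in Section \ref{construction of cut-off function} has controlled $|\nabla\phi|$ and $|\Delta\phi|$ but no pointwise bound on $|\nabla^2\phi|$, the natural move of integrating by parts a second time to move a derivative onto $\phi$ is unavailable --- any such integration by parts generates $\nabla|\nabla\phi|^2 = 2\nabla^2\phi(\nabla\phi,\cdot)$ and hence the Hessian. The resolution is more delicate: split the cross term via Cauchy--Schwarz into $\epsilon\int\phi^{2k}(\Delta u)^2$ plus a gradient term $C_\epsilon\int\phi^{2k-2}|\nabla\phi|^2|\nabla u|^2$, and reduce the gradient term using the auxiliary identity $\int\psi|\nabla u|^2 = -\int\psi u\Delta u - \int u\langle\nabla\psi,\nabla u\rangle$, combined with an integrated (rather than pointwise) bound on $|\nabla^2\phi|^2$ from the Bochner formula applied to $\phi$ so that the Hessian contribution in $\nabla\psi$ can be controlled. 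Keeping the exponent tracking on $\phi$ clean throughout this procedure, so that every error coefficient reduces to a constant times $E$ after taking $k$ large, is the bookkeeping-heavy but otherwise routine part of the argument.
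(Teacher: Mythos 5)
Your overall architecture (energy inequalities for $\Delta^j u$ against cut-offs with controlled $|\nabla\phi|,|\Delta\phi|$, removal of the initial data by $t$-weighting, and an iteration over $m+1$ nested balls trading powers of $\Delta$ for time-integrated norms) is essentially the paper's argument; the paper merely packages the iteration into a single functional $F_m(t)=\sum_{i=0}^m a_i t^{i+1}\int|\Delta^i u|^2\phi_i^2$ with inductively chosen coefficients $a_{i-1}=(C_2T+i+1)a_i$, which is equivalent bookkeeping to your nested time intervals and yields the same $(1+C_2T)^{m+1}/t^{m+1}$ prefactor.

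However, there is a genuine gap at exactly the step you flag as the main obstacle, and your proposed resolution does not close it. You are right that the cross term forces you to control $\int\phi^{2k-2}|\nabla\phi|^2|\nabla u|^2$, but your fix --- writing this as $-\int\psi u\Delta u-\int u\langle\nabla\psi,\nabla u\rangle$ with $\psi=\phi^{2k-2}|\nabla\phi|^2$ and controlling the Hessian of $\phi$ hidden in $\nabla\psi$ by an \emph{integrated} Bochner bound --- fails, because after Cauchy--Schwarz the Hessian appears in the combination $\int u^2\phi^{2k-2}|\nabla^2\phi|^2$. Since at this stage $u$ is only controlled in $L^2$ (a pointwise bound on $u$ is precisely what this lemma is later used to produce), an $L^2$ bound on $|\nabla^2\phi|$ cannot absorb the factor $u^2$, and no pointwise Hessian bound is available from the Schoen--Yau construction under a mere Ricci lower bound. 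The correct move, which is the one the paper makes, needs no Hessian at all: first use the pointwise inequality $|\nabla\phi|^2\leq C\rho^{-2}\phi^{2-2/k}$ to replace the weight by a constant times a \emph{pure power} of $\phi$, and only then integrate by parts,
\[
\int|\nabla v|^2\phi^{2-2/k}=-\int v\,\Delta v\,\phi^{2-2/k}-\Bigl(2-\tfrac{2}{k}\Bigr)\int\phi^{1-2/k}\,v\,\langle\nabla v,\nabla\phi\rangle,
\]
so that only $\nabla\phi$ reappears and both terms are absorbed by Cauchy--Schwarz into $\epsilon\int|\Delta v|^2\phi^2$, $\frac12\int|\nabla v|^2\phi^{2-2/k}$ and a zeroth-order term. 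With this substitution your base-case energy inequality and the rest of your iteration go through. One further small correction: for small radii ($R\lesssim R_0$) the coefficient $\max\{R_0,R\}^2\max\{1,K\}R^{-4}$ comes from the globally defined distance-like function of Lemma \ref{global distance-like function}, whose Laplacian bound degenerates like $R_0\sqrt{1+K}/r$ near the base point, not from Lemma \ref{distance-like function lemma 2} alone, which only applies outside $B(p,R_0)$.
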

\begin{proof}
Let $\phi$ be a cut-off function supported on $D_l$, with $\phi = 1$ on $D_{s}$. Let's first consider the case $R_0<  s < l \leq R$, then using the method in section \ref{construction of cut-off function}, we can make $\phi$ satisfy
\begin{equation}\label{eqn: delta of phi in proof of l2 estimate}
|\n \phi| \leq \frac{C(n,k)}{l-s} \phi^{1-1/k}, \quad |\Delta \phi| \leq \max \set { \frac{C(n,k)\sqrt{1+K}}{l-s} \phi^{1-2/k}, \quad \frac{C(n,k)}{(l-s)^2} \phi^{1-2/k} },
\end{equation}
where $k$ is taken to be $\geq 4$. 

We start by taking the time derivative of the following $L^2$-integral, where $m\geq 0$ is any integer, then apply integration by parts and Cauchy-Schwarz inequality,
\[
\begin{split}
&\frac{d}{dt} \int |\Delta^m u|^2 \phi^2 \\
= & - \int 2 \Delta^m u \Delta^{m+2} u \phi^2 \\
= & \int 2 \langle \n \Delta^m u, \n \Delta^{m+1} u \rangle\phi^2 + 4 \phi \Delta^m u \langle \n \Delta^{m+1} u, \n \phi \rangle  \\
= & - \int 2 |\Delta^{m+1} u|^2 \phi^2 + 8 \phi \Delta^{m+1} u \langle \n \Delta^m u, \n \phi \rangle + 4 \Delta^m u \Delta^{m+1} u \left( \phi \Delta \phi + |\n \phi|^2 \right) \\
\leq & - (2-3\epsilon)\int |\Delta^{m+1} u|^2 \phi^2 + \epsilon^{-1}\int 16 |\n \Delta^m u|^2 |\n \phi|^2 + 4 |\Delta^m u|^2 \left( |\Delta \phi|^2 + |\n \phi|^4 \phi^{-2} \right).
\end{split}
\]
By the choice of $\phi$, we can apply integration by parts and Cauchy-Schwarz inequality to get,  
\[
\begin{split}
& \int |\n \Delta^m u|^2 |\n \phi|^2 \\
\leq &  \frac{C(n,k)}{(l-s)^2} \int |\n \Delta^m u|^2 \phi^{2-2/k}\\
= & \frac{C(n,k)}{(l-s)^2} \int - \Delta^m u \Delta^{m+1} u \phi^{2-2/k}- (2-2/k) \phi^{1-2/k} \Delta^m u \langle \n \Delta^m u, \n \phi \rangle \\
\leq & \frac{1}{2}\epsilon^2 \int |\Delta^{m+1} u|^2 \phi^2 + \frac{C(n,k)}{(l-s)^4} (1+\epsilon^{-2}) \int |\Delta^m u|^2 \phi^{2-4/k}  + \frac{1}{2} \int |\n \Delta^m u|^2 |\n \phi|^2. 
\end{split}
\]
Take $\epsilon = \frac{1}{4}$, then the first term in the last line can be absorbed, and we have 
\[
\begin{split}
\frac{d}{dt} \int |\Delta^m u|^2 \phi^2 
\leq -\int |\Delta^{m+1} u|^2 \phi^2 + C(n,k) \int |\Delta^m u|^2 \left( (l-s)^{-4}\phi^{2-4/k} + |\Delta \phi|^2 \right).
\end{split}
\]
By the bound of $\Delta \phi$, we have
\begin{equation}\label{eqn: time derivative of the l2 integral prototype}
\frac{d}{dt} \int |\Delta^m u|^2 \phi^2 
\leq -\int |\Delta^{m+1} u|^2 \phi^2 + C_1 \int |\Delta^m u|^2  \phi^{2-4/k} 
\end{equation}
where
\[
C_1 = \frac{C(n,k) (1+K)}{(l-s)^2} + \frac{C(n,k)}{(l-s)^4}.
\]

Similarly, for $i = 0, 1, 2,..., m$, let $\phi_i = 1 $ on $D_{(1-\frac{i+1}{2m+2})R}$ and $\phi_i =0$ on $M \backslash D_{(1-\frac{i}{2m+2})R}$, (\ref{eqn: time derivative of the l2 integral prototype}) becomes
\begin{equation}\label{eqn: time derivative of the l2 integral}
\frac{d}{dt} \int |\Delta^i u|^2 \phi_i^2 
\leq -\int |\Delta^{i+1} u|^2 \phi_i^2 + C_2 \int |\Delta^i u|^2  \phi_i^{2-4/k} ,
\end{equation}
where 
\[
C_2 = \frac{C(n,m, k)}{R^2} \left(1+K + \frac{1}{R^2} \right) \leq C(n,m,k) \frac{\max \set{1, K} }{R^2} ,
\]
where we have used the assumption that $R > R_0 \geq 1$.
Now define
\[
F_m(t) = \sum_{i=0}^m a_i t^{i+1} \int |\Delta^i u|^2 \phi_i^2, \quad t \in [0,T].
\]
By (\ref{eqn: time derivative of the l2 integral}),
\[\begin{split}
& \frac{d}{dt} F_m(t) \\
\leq &\sum_{i=1}^m \left( C_2 a_i t^{i+1} \int |\Delta^i u|^2 \phi_i^{2-4/k} + (i+1) a_i t^{i} \int |\Delta^i u|^2 \phi_i^2 - a_{i-1}t^{i} \int |\Delta^i u |^2 \phi_{i-1}^2 \right) \\
& + a_0 \int u^2 \phi_0^2 + C_2 a_0 t \int u^2 \phi_0^{2-4/k} - a_m t^{m+1} \int |\Delta^{m+1} u|^2 \phi_m^2.
\end{split}
\]
Take
\begin{equation}\label{definition of a_m}
a_m = 1, \quad a_{i-1} = (C_2 T + i + 1) a_{i}, \quad for \quad i = m, m-1, m-2, ...,1.
\end{equation}
Then
\[
  \frac{d}{dt} F_m(t) \leq a_0 \int u^2 \phi_0^2 + C_2 a_0 t \int u^2 \phi_0^{2-4/k}.
\]
Since $F_m(0) = 0$ and $\phi_0$ is supported on $D_r$, we have
\[
F_m(t) \leq (1+C_2T) a_0 \int_0^t \int_{D_R}|u(x,s)|^2 .
\]
Fix $k =4$, the constants can be estimated using the inductive relation (\ref{definition of a_m}) and we finish the proof. 

When $0 < s < l <R \leq  2 R_0$, the distance-like function $f$ satisfies
\[
|\Delta f | \leq \frac{\Lambda(n) R_0 \sqrt{1+K}}{d(p,x)} \leq \frac{C(n) R_0 \sqrt{1+K}}{f(x)} ,
\]
hence we can verify that the cut-off function $\phi$ satisfies
\[
|\Delta \phi| \leq \max \set {\frac{C(n,k)R_0 \sqrt{1+K}}{s(l-s)} \phi^{1-2/k}, \quad \frac{C(n,k)}{(l-s)^2} \phi^{1-2/k} }.  
\]
Then the constant $C_2$ in (\ref{eqn: time derivative of the l2 integral}) can be taken to be
\[
C_2 =  \frac{C(n, m,k)R_0^2 \max \set{1,K}}{R^4},  
\]
the rest of the proof is the same as before.

In the special case that $K = 0$, by Lemma \ref{global distance-like function on Ricci nonnegative manifolds} and Remark \ref{rem: local distance-like function nonnegative Ricci}, we have a distance-like function $f$ satisfying
\[ |\Delta f(x)| \leq \frac{C(n)(1- \ln \nu_p(d(p,x)))}{d(p,x)} \leq \frac{ \Lambda(n)C(n)(1- \ln \nu_p(f(x)))}{f(x)},\]
hence the cut-off function $\phi$ supported on $D_l$ and equals $1$ on $D_s$ can be taken to satisfy
\[
|\Delta \phi| \leq \max \set { \frac{C(n)(1- \ln \nu_p(l) )}{s(l-s)} \phi^{1-2/k} , \quad \frac{C(n,k)}{(l-s)^2} \phi^{1 - 2/k} }  
\]
instead of (\ref{eqn: delta of phi in proof of l2 estimate}). Then the constant $C_2$ in (\ref{eqn: time derivative of the l2 integral}) can be taken to be
\[
C_2 =  \frac{C(n, m,k)(1- \ln \nu_p(R))}{R^4}.  
\]

\end{proof}

\subsection{A mean value inequality for the biharmonic heat equation}
\subsubsection{Multiplicative Sobolev inequality.}
The following Sobolev inequality is well-known \cite{Sal1992}. Here $B_r$ denotes a geodesic ball with radius $r$, and $V(r) = Vol(B_r)$.
\begin{lem}\label{lem: Sobolev inequality}
Suppose $Ric \geq -K$ on $B_r$, then for any $u \in W_0^{1,2}(B_r)$, 
\[
\left( \int_{B_r} u^{\frac{2\mu}{\mu-2}}\right)^{\frac{\mu - 2}{\mu}} \leq \frac{C(n,\mu) e^{\sqrt{K}r}}{V(r)^{\frac{2}{\mu}}} \int_{B_r} r^2 |\n u|^2 + u^2,
\]
where $\mu \geq n$ when $n \geq 3$ and $\mu > 2$ when $n = 2$. 
\end{lem}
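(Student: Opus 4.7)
The plan is to reduce the inequality to two classical estimates on geodesic balls under a Ricci lower bound, namely volume doubling and the Poincaré inequality, and then invoke the abstract Saloff-Coste machinery. First, I would recall that Bishop-Gromov comparison on a ball $B_r$ with $Ric \geq -K$ gives the doubling property $V(y,2s) \leq e^{C(n)\sqrt{K}r}\, V(y,s)$ for every sub-ball $B(y,2s)\subset B_r$, and in particular the relative volume bound $V(y,s)/V(r) \geq c(n) e^{-C\sqrt{K}r} (s/r)^n$. Second, I would invoke Buser's $L^2$ Poincaré inequality, which under the same curvature hypothesis yields
\[
\int_{B(y,s)} |f-f_{B(y,s)}|^2 \leq C(n)\, e^{C\sqrt{K}r}\, s^2 \int_{B(y,s)} |\n f|^2
\]
for every $f\in C^\infty$ and every $B(y,s)\subset B_r$.

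Next, I would feed these two ingredients into Saloff-Coste's theorem relating scale-invariant doubling and Poincaré to a family of Sobolev inequalities. The output (for compactly supported $u$ on $B_r$) is exactly the stated estimate with exponent $\mu = n$ in dimension $n\geq 3$: the factor $V(r)^{-2/n}$ arises from the volume normalization, the factor $r^2$ from scaling, and the exponential factor $e^{\sqrt{K}r}$ from the two inputs (which compound only linearly in $\sqrt{K}r$ through the iteration). For $n=2$ the critical exponent $\mu = 2$ is not admissible, so one fixes any $\mu>2$ and upgrades the $n$-dimensional result either by John-Nirenberg–type interpolation inside the Saloff-Coste argument or, more simply, by applying Hölder's inequality and the doubling estimate to pass from the critical exponent to any larger $\mu$. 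The same Hölder trick upgrades the $\mu=n$ inequality to arbitrary $\mu\geq n$ when $n\geq 3$.

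The main technical point is not conceptual but bookkeeping: making sure that the exponential factors from doubling and Poincaré combine into a single $e^{C\sqrt{K}r}$ with a dimensional constant, and that the $W^{1,2}_0(B_r)$ hypothesis is used correctly so that extension by zero is permissible and no boundary terms appear. Since this lemma is a direct specialization of the main result of \cite{Sal1992}, my actual writeup would simply cite Saloff-Coste, state the two inputs above, and verify that the normalization (the $r^2$ in front of $|\n u|^2$ and the $V(r)^{-2/n}$ in front of the $L^{2\mu/(\mu-2)}$ norm) matches the form needed in the subsequent mean-value inequality. No additional geometric construction is required beyond what is already in Section~2.
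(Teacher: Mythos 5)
Your proposal is correct and matches the paper's treatment: the paper offers no proof of this lemma, simply labelling it well-known and citing Saloff-Coste \cite{Sal1992}, which is exactly what your writeup would do, and your sketch via Bishop--Gromov doubling, Buser's Poincar\'e inequality, and the Saloff-Coste machinery (with the H\"older/doubling upgrade from the critical exponent to general $\mu$, and an auxiliary $\mu>2$ in dimension $2$) is the standard and correct route behind that citation.
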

Since $u$ has compact support, by integration by parts, we have
\[
\int|\n u|^2 = - \int u \Delta u \leq \frac{1}{2}\int r^2|\Delta u|^2 + r^{-2}u^2,
\]
hence we have
\begin{lem}
Suppose $Ric \geq -K$ on $B_r$, then for any $u \in W_0^{2,2}(B_r)$, 
\[
\left( \int_{B_r} u^{\frac{2\mu}{\mu-2}}\right)^{\frac{\mu - 2}{\mu}} \leq \frac{C(n,\mu) e^{\sqrt{K}r}}{V(r)^{\frac{2}{\mu}}} \int_{B_r} r^4 |\Delta u|^2 + u^2,
\]
where $\mu \geq n$ when $n \geq 3$ and $\mu > 2$ when $n = 2$. 
\end{lem}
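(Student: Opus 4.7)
The plan is to reduce this $W_0^{2,2}$ Sobolev inequality to the $W_0^{1,2}$ Sobolev inequality (Lemma \ref{lem: Sobolev inequality}) already recorded above, by replacing the gradient term on the right-hand side with a Laplacian term. Since $W_0^{2,2}(B_r) \subset W_0^{1,2}(B_r)$, the $W_0^{1,2}$ inequality already gives
\[
\left(\int_{B_r} u^{\frac{2\mu}{\mu-2}}\right)^{\frac{\mu-2}{\mu}} \leq \frac{C(n,\mu)e^{\sqrt{K}r}}{V(r)^{2/\mu}}\int_{B_r} r^2|\nabla u|^2 + u^2,
\]
so the only task is to bound $\int_{B_r} r^2 |\nabla u|^2$ by a multiple of $\int_{B_r} r^4 |\Delta u|^2 + u^2$.

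First I would observe that because $u \in W_0^{2,2}(B_r)$ has compact support inside $B_r$, integration by parts gives $\int_{B_r}|\nabla u|^2 = -\int_{B_r} u \, \Delta u$ with no boundary contribution. Then applying a weighted Cauchy--Schwarz inequality with weight $r$, namely $|u\,\Delta u| \leq \tfrac{1}{2} r^2 |\Delta u|^2 + \tfrac{1}{2} r^{-2} u^2$, yields
\[
\int_{B_r} |\nabla u|^2 \leq \tfrac{1}{2}\int_{B_r} r^2 |\Delta u|^2 + \tfrac{1}{2}\int_{B_r} r^{-2} u^2.
\]
Multiplying through by $r^2$ gives $\int_{B_r} r^2 |\nabla u|^2 \leq \tfrac{1}{2}\int_{B_r} r^4 |\Delta u|^2 + \tfrac{1}{2}\int_{B_r} u^2$, which is exactly the replacement needed.

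Substituting this estimate into the $W_0^{1,2}$ Sobolev inequality and absorbing the factor $\tfrac{1}{2}$ into the constant $C(n,\mu)$ produces the stated $W_0^{2,2}$ inequality. There is no real obstacle here: the proof is a one-step ``bootstrap'' from the first-order Sobolev inequality via integration by parts, and the constants in $\mu$ and $n$ are inherited directly from Lemma \ref{lem: Sobolev inequality}. The only mildly delicate point is keeping the $r$-weights consistent so the resulting inequality is scale-invariant in the correct sense, and this is handled automatically by the weighted Cauchy--Schwarz step above.
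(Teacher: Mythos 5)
Your proposal is correct and is essentially the same argument as the paper's: integrate by parts to write $\int |\nabla u|^2 = -\int u\,\Delta u$, apply the weighted Cauchy--Schwarz bound $|u\,\Delta u| \le \tfrac12 r^2|\Delta u|^2 + \tfrac12 r^{-2}u^2$, and substitute into the first-order Sobolev inequality of Lemma \ref{lem: Sobolev inequality}. No issues.
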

This Sobolev inequality can be used to prove a multiplicative version. Here we use the method of \cite{LSU1968}, see also the appendix of \cite{KS2002}. Denote  
\[
\|u\|_{r, p} = \left( \int_{B_r} |u|^p\right)^\frac{1}{p}.
\]
Let $\tau \geq 0$, by the Sobolev inequality,
\[
\left( \int_{B_r} |u|^{(1+\tau)\frac{2\mu}{\mu-2}}\right)^{\frac{\mu - 2}{\mu}} \leq \frac{C(n,\mu) e^{\sqrt{K}r}}{V(r)^{\frac{2}{\mu}}} \int_{B_r}  r^2 (1+\tau)^2|u|^{2\tau}|\n |u||^2 + |u|^{2(1+\tau)},
\]
by integration by parts,
\[
\int |u|^{2\tau}|\n |u| |^2 =  \frac{1}{1+2\tau}\int\langle \n |u|^{1+2\tau}, \n |u|\rangle  = - \frac{1}{1+2\tau} \int |u|^{1+2\tau} \Delta |u|,
\]
note that $\Delta |u|$ is well defined when $u \neq 0$ so $|u|^{1+2\tau} |\Delta |u|| \leq |u|^{1+2\tau} |\Delta u|$ a.e. By the Holder inequality we have
\[
\int |u|^{2\tau}|\n |u| |^2 \leq \frac{1}{1+2\tau}\left( \int |u|^{(1+2\tau)q}\right)^\frac{1}{q}\left(\int |\Delta u|^p \right)^\frac{1}{p},
\]
\[
\int |u|^{2(1+\tau)} \leq \left( \int |u|^{(1+2\tau)q}\right)^\frac{1}{q}\left(\int | u|^p \right)^\frac{1}{p},
\]
where $p>1$ and $\frac{1}{p} + \frac{1}{q} = 1$. Hence
\[
\|u\|_{r, \frac{2(1+\tau)\mu}{\mu-2}}^{2(1+\tau)} \leq C_S (1+\tau)^2 \|u\|_{r, (1+2\tau)q}^{1+2\tau} \left( \frac{r^2}{1+2\tau}  \| \Delta u \|_{r, p} +  \|u\|_{r, p}  \right),
\]
where
\[
C_S = \frac{C(n,\mu) e^{\sqrt{K}r}}{V(r)^{\frac{2}{\mu}}} .
\]
Assume (by multiplying a suitable constant to $u$) that
\[
 r^2 \|  \Delta u \|_{r, p} +  \|u\|_{r, p} = 1.
\]
Then
\[
\|u\|_{r, \frac{2(1+\tau)\mu}{\mu-2}}^{} \leq C_S^{\frac{1}{2(1+\tau)}} (1+\tau)^\frac{1}{1+\tau} \|u\|_{r, (1+2\tau)q}^{\frac{1+2\tau}{2(1+\tau)}} .
\]
Take 
\[
0\leq \tau_0 < \frac{1}{2}, \quad \tau_{i+1} = \frac{\mu}{\mu - 2} \frac{1}{q} (1+ \tau_i) - \frac{1}{2}, \quad  i =0, 1 ,2,...
\]
we see that 
\[1+  2\tau_{i+1} = k 2 (1+\tau_i)  , \quad where \quad k = \frac{\mu}{(\mu -2)q},\]
and by direct calculation
\[
1 + \tau_i = k^i (1+ \tau_0) + \frac{k^i - 1}{2(k - 1)}.
\]
Then
\[
\|u\|_{r, (1+2 \tau_{i+1})q}^{} \leq C_S^{\frac{1}{2(1+\tau_i)}} (1+\tau_i)^\frac{1}{1+\tau_i} \|u\|_{r, (1+2\tau_i)q}^{\frac{1+2\tau_i}{2(1+\tau_i)}} .
\]
Iterating the above inequality yields
\[
|u|_{r, \infty} \leq C_S^{\sum \frac{1}{2(1+ \tau_i)}} e^{\sum \frac{\ln(1+ \tau_i)}{1+ \tau_i}} \|u\|_{r, (1+2\tau_0)q}^{\prod \frac{1+2\tau_i}{2(1+\tau_i)}},
\]
where we have used the fact $\frac{1+2\tau_i}{2(1+\tau_i)} < 1$ to simplify the calculation, and we impose the condition that
\[
k=\frac{\mu }{\mu - 2}\frac{1}{q} > 1 , \quad i.e. \quad p > \frac{\mu}{2},
\]
which ensures $\tau_i \to \infty$ and the convergence of the series. 
By direct calculation,
\begin{equation}\label{eqn: definition of alpha}
\alpha : = \prod_{i=1}^\infty \frac{1+2\tau_i}{2(1+\tau_i)} = \lim_{j\to \infty} \frac{(1+2\tau_0)k^j}{2(1+\tau_j)} =  \frac{1+2\tau_0}{2(1+\tau_0) + (k-1)^{-1}}  \in (0,1),
\end{equation}
\begin{equation}\label{eqn: definition of beta}
\beta : = \sum \frac{1}{2(1+ \tau_i)} \leq \frac{1}{2(1+ \tau_0)} \frac{\mu}{\mu - q (\mu - 2)} .
\end{equation}
Therefore we have proved the following multiplicative Sobolev inequality:
\begin{lem} Suppose $Ric \geq -K$ in the geodesic ball $B_r$, then for any $u \in W_0^{2,2}(B_r)$, we have
\[
\norm {u}_{r, \infty} \leq C(n, \mu , p, \tau_0) C_S^\beta  \|u\|_{r, (1+2 \tau_0) q}^{\alpha} \left( r^2 \|\Delta u\|_{r, p} + \|u\|_{r, p} \right)^{1-\alpha},
\]
where $p > \frac{\mu}{2}$, $q = \frac{p}{p-1}$, $0\leq \tau_0 < \frac{1}{2}$, $\mu$ is as in Lemma \ref{lem: Sobolev inequality}, and $\alpha, \beta$ are defined in (\ref{eqn: definition of alpha}, \ref{eqn: definition of beta}). 
\end{lem}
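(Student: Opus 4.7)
The plan is a classical Moser iteration, adapted to the biharmonic setting where $\Delta u$ takes the role usually played by $\n u$. First I would upgrade the $W_0^{1,2}$ Sobolev inequality to a $W_0^{2,2}$ form by noting that for compactly supported $u$, integration by parts yields $\int|\n u|^2 \leq \frac{1}{2}\left(r^{-2}\int u^2 + r^2\int|\Delta u|^2\right)$, which substituted into the previous Sobolev inequality gives the intermediate $W_0^{2,2}$ form already stated just before the recurrence. Applying this to $|u|^{1+\tau}$ with a parameter $\tau \geq 0$ produces
$$\|u\|_{r,2(1+\tau)\mu/(\mu-2)}^{2(1+\tau)} \leq C_S \int_{B_r} r^2(1+\tau)^2|u|^{2\tau}|\n|u||^2 + |u|^{2(1+\tau)}.$$

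Next I would convert the gradient term back into a term involving $\Delta u$: integration by parts gives $\int|u|^{2\tau}|\n|u||^2 = -\frac{1}{1+2\tau}\int |u|^{1+2\tau}\Delta|u|$, valid a.e.\ because $|\Delta|u|| \leq |\Delta u|$ pointwise on $\{u\neq 0\}$. Splitting via H\"older with exponents $p$ and $q = p/(p-1)$, and normalizing by the scale factor $r^2\|\Delta u\|_{r,p} + \|u\|_{r,p}$ so that the right-hand side collapses, yields the one-step recurrence
$$\|u\|_{r,2(1+\tau)\mu/(\mu-2)} \leq C_S^{1/(2(1+\tau))} (1+\tau)^{1/(1+\tau)} \|u\|_{r,(1+2\tau)q}^{(1+2\tau)/(2(1+\tau))}.$$

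I then choose $\tau_0 \in [0, 1/2)$ and define $\tau_{i+1}$ by the matching relation $(1+2\tau_{i+1})q = 2\mu(1+\tau_i)/(\mu-2)$, so successive applications of the recurrence chain together. The threshold $p > \mu/2$ is precisely what makes $k := \mu/((\mu-2)q) > 1$, forcing $1+\tau_i = k^i(1+\tau_0) + (k^i-1)/(2(k-1))$ to grow geometrically and guaranteeing convergence of the exponent-series $\sum 1/(2(1+\tau_i))$ and of the factor $\exp\left(\sum \ln(1+\tau_i)/(1+\tau_i)\right)$. Iterating and sending $i\to\infty$ gives
$$\|u\|_{r,\infty} \leq C(n,\mu,p,\tau_0)\, C_S^{\beta}\, \|u\|_{r,(1+2\tau_0)q}^{\alpha},$$
where $\alpha = \lim_j (1+2\tau_0)k^j/(2(1+\tau_j))$ comes from telescoping the product $\prod (1+2\tau_i)/(2(1+\tau_i))$, and $\beta$ is controlled by summing a geometric series. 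Undoing the normalization restores the factor $(r^2\|\Delta u\|_{r,p} + \|u\|_{r,p})^{1-\alpha}$ and yields the stated inequality.

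The main obstacle is tracking constants carefully enough through the iteration to extract $\alpha$ and $\beta$ in closed form, and ensuring the step converting $|\n|u||^2$ into $\Delta|u|$ via integration by parts is rigorous when $u$ can change sign; the latter is handled by standard truncation/mollification away from $\{u=0\}$ and is routine, while the former is essentially the bookkeeping already outlined above.
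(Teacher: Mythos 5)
Your proposal is correct and follows essentially the same route as the paper: apply the Sobolev inequality to $|u|^{1+\tau}$, convert $\int |u|^{2\tau}|\n |u||^2$ into $-\frac{1}{1+2\tau}\int |u|^{1+2\tau}\Delta |u|$ by parts, split by H\"older with exponents $p$ and $q=\frac{p}{p-1}$, normalize $r^2\|\Delta u\|_{r,p}+\|u\|_{r,p}=1$, and iterate with the same recursion $\tau_{i+1}=\frac{\mu}{(\mu-2)q}(1+\tau_i)-\frac12$, recovering $\alpha$ and $\beta$ exactly as in (\ref{eqn: definition of alpha}) and (\ref{eqn: definition of beta}). No gaps beyond the routine justification of $|\Delta|u||\leq|\Delta u|$ a.e., which the paper treats in the same brief manner.
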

\subsubsection{Using averaged norm.}
Let 
\[
\mnorm{u}_{r, p} = \left(\frac{1}{Vol B_r}  \int_{B_r} |u|^p \right)^\frac{1}{p}.
\]
Then the Sobolev inequality takes the form
\[
\mnorm{u}_{r, \frac{2\mu}{\mu - 2}}^2 \leq C(n, \mu)e^{\sqrt{K}r}  \left(r^2 \mnorm{\n u}_{r,2}^2 +  \mnorm{u}_{r, 2}^2 \right).
\]
The above argument yields
\begin{lem} Suppose $Ric \geq -K$ in the geodesic ball $B_r$, then for any $u \in W_0^{2,2}(B_r)$, we have
\begin{equation}\label{multiplicative sobolev inequality with averaged norm}
\mnorm{u}_{r, \infty} \leq C(n, \mu , p, \tau_0) e^{\beta\sqrt{K}r} \mnorm{u}_{r, (1+2 \tau_0) q}^{\alpha} \left( r^2 \mnorm {\Delta u}_{r, p} +  \mnorm{u}_{r, p} \right)^{1-\alpha},
\end{equation}
where $p > \frac{\mu}{2}$, $q = \frac{p}{p-1}$, $0\leq \tau_0 < \frac{1}{2}$, $\mu$ is as in Lemma \ref{lem: Sobolev inequality}, and $\alpha, \beta$ are defined in (\ref{eqn: definition of alpha}, \ref{eqn: definition of beta}). 
\end{lem}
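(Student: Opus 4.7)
The plan is to run the Moser iteration from the preceding non-averaged lemma verbatim, only with $\mnorm{\cdot}_{r,p}$ in place of $\norm{\cdot}_{r,p}$ throughout. Because $\mnorm{u}_{r,p} = V(r)^{-1/p}\norm{u}_{r,p}$, and because H\"older's inequality with conjugate exponents $(p,q)$ is invariant under this rescaling (the volume factors on the two sides match up since $\tfrac1p+\tfrac1q=1$), every step that produced the non-averaged inequality has a direct averaged analogue. The net effect of passing to averaged norms is to remove the factor $V(r)^{-2/\mu}$ from the Sobolev constant $C_S$, leaving only the dimensional constant and the exponential $e^{\sqrt{K}r}$.

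Concretely, I would begin from the averaged Sobolev inequality
\[
\mnorm{u}_{r,\frac{2\mu}{\mu-2}}^{2} \leq C(n,\mu) e^{\sqrt{K}r}\left(r^{2}\mnorm{\n u}_{r,2}^{2} + \mnorm{u}_{r,2}^{2}\right)
\]
displayed just above the lemma, apply it to $|u|^{1+\tau}$, and use integration by parts together with $|\Delta|u||\leq|\Delta u|$ a.e.\ on $\{u\neq 0\}$ to replace $\mnorm{|u|^{\tau}\n|u|}_{r,2}^{2}$ by a product of $\mnorm{u}_{r,(1+2\tau)q}^{1+2\tau}$ and $\mnorm{\Delta u}_{r,p}$ via H\"older. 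Normalizing so that $r^{2}\mnorm{\Delta u}_{r,p}+\mnorm{u}_{r,p}=1$, I arrive at
\[
\mnorm{u}_{r,\frac{2(1+\tau)\mu}{\mu-2}} \leq \bigl(C(n,\mu)e^{\sqrt{K}r}\bigr)^{\frac{1}{2(1+\tau)}}(1+\tau)^{\frac{1}{1+\tau}}\mnorm{u}_{r,(1+2\tau)q}^{\frac{1+2\tau}{2(1+\tau)}},
\]
which is the averaged counterpart of the key iteration step from the previous proof.

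Iterating along the same sequence $\tau_{i}$ defined by $1+2\tau_{i+1}=2k(1+\tau_{i})$ with $k = \mu/((\mu-2)q)>1$, the product of exponents on the $\mnorm{u}$ factor telescopes to $\alpha$ as in \eqref{eqn: definition of alpha}, the telescoping sum $\sum \frac{1}{2(1+\tau_{i})}$ converges to $\beta$ as in \eqref{eqn: definition of beta}, and the terms $(1+\tau_{i})^{1/(1+\tau_{i})}$ contribute a finite factor $C(n,\mu,p,\tau_{0})$. Sending $i\to\infty$ and undoing the normalization yields the claimed inequality, with the exponential prefactor collected as $e^{\beta\sqrt{K}r}$.

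The only real obstacle is bookkeeping: one must verify that each H\"older and each integration-by-parts step preserves the averaged character, i.e.\ that the $V(r)$ factors introduced by the conversion $\norm{\cdot}_{r,p}=V(r)^{1/p}\mnorm{\cdot}_{r,p}$ cancel identically on both sides after each application. Once this is observed (it follows from $\tfrac1p+\tfrac1q=1$ and the matching powers in the Sobolev exponent $\tfrac{2\mu}{\mu-2}$), the argument is a mechanical transcription of the previous proof and no new ideas are required.
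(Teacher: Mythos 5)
Your proposal is correct and matches the paper's own treatment: the paper simply observes that the averaged Sobolev inequality absorbs the volume factor into the norms and then reruns the preceding iteration verbatim ("the above argument yields"), exactly as you describe. Your verification that the H\"older and integration-by-parts steps respect the averaged norms because $\tfrac1p+\tfrac1q=1$ is precisely the (implicit) bookkeeping the paper relies on.
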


\subsubsection{$L^p$ estimates.} From now on we do not assume $u$ to be compactly supported. 
The Sobolev inequalities can help us to control $L^p$ norms of $u$ and $\Delta u$ in terms of $L^2$ norms of $\Delta^i u$, $i = 0, 1, 2,...,m$ for $m$ large enough. 

Let $\phi$ be a cut-off function supported on $B_r$ and $\phi = 1$ on $B_{(1-\frac{1}{m})r}$, such that $|\n \phi| \leq C(n,m) r^{-1}$. by the Sobolev inequality we have
\[
\mnorm{|u|^s \phi }_{r, \frac{2\mu}{\mu - 2}}^2 \leq C(n, \mu)e^{\sqrt{K}r}  \left(r^2 \mnorm{\n (|u|^s\phi) }_{r,2}^2 +  \mnorm{|u|^s\phi}_{r, 2}^2 \right).
\]
Use integration by parts and Cauchy -Schwarz inequality to estimate
\[
\begin{split}
\int |\n (|u|^s \phi)|^2 = & \int s^2 |u|^{2s-2} |\n |u||^2 \phi^2 + |u|^{2s}|\n \phi|^2 + 2 s \phi |u|^{2s-1} \langle \n |u|, \n \phi\rangle \\
= & - \int \frac{s^2}{2s-1} |u|^{2s-1} \Delta |u| \phi^2 + \int |u|^{2s} |\n \phi|^2 \\
& + \int \left( \frac{s^2}{2s-1} +s \right) |u|^{2s-1} \langle \n |u|, \n \phi^2\rangle\\
= & - \int \frac{s^2}{2s-1} |u|^{2s-1} \Delta |u| \phi^2 + \int |u|^{2s} |\n \phi|^2 \\
& + \int \left( \frac{s}{2s-1} +1 \right) |u|^{s} \langle \n (|u|^s \phi) - |u|^s \n \phi, 2 \n \phi \rangle\\
\leq  & - \int \frac{s^2}{2s-1} |u|^{2s-1} \Delta |u| \phi^2  + \int 4 \left( \frac{s}{2s-1} +1 \right)^2 |u|^{2s} |\n \phi|^2 \\
& + \frac{1}{2} \int |\n (|u|^s \phi)|^2.
\end{split}
\]
Absorb the last term by the LHS, then by Holder inequality and the bounds on $|\n \phi|$, 
\[
  \begin{split}
\int |\n (|u|^s \phi)|^2 \leq & C(n,s) \left(\int_{spt(\phi)}  |u|^{2s} \right)^{\frac{2s-1}{2s}} \left( \int_{spt(\phi)} |\Delta u|^{2s}\right)^\frac{1}{2s} \\
& + \frac{C(n,m,s)}{r^2} \int_{spt(\phi)} |u|^{2s} ,
  \end{split}
\]
suppose $r>1$, by Young\rq{}s inequality we can estimate the gradient term:
\[
\int |\n (|u|^s \phi)|^2 \leq C(n,s) \int_{spt(\phi)} r^{2(2s-1)} |\Delta u|^{2s}+ C(n,m,s) r^{-2} \int_{spt(\phi)} |u|^{2s}.
\]
By this estimate and the volume comparison theorem, we have the following inequality in averaged norm,
\[
\mnorm{u}_{(1-\frac{1}{m})r, \frac{2s\mu}{\mu - 2}}^2 \leq C(n, \mu,m)^{1/s}e^{\sqrt{K}r/s}  \left( \mnorm{ r^2 \Delta u }_{r,2s}^2 +  \mnorm{u}_{r, 2s}^2 \right).
\]
Now let 
\[
s_i = \left(\frac{\mu}{\mu-2}\right)^i , \quad  r_i = \left( 1- \frac{i}{4m} \right) r, \quad i = 0,1,2,..., m.
\]
By choosing the cut-off function $\phi = 1$ on $D_{r_{i+1}}$ and supported on $D_{r_{i}}$, and set $s= s_{i}$, the above argument yields that for $i \geq 1$,
\[
\mnorm{u  }_{r_{i}, 2s_{i}}^{} \leq C(n, \mu,m)^{\frac{1}{s_{i-1}}}e^{\sqrt{K}r\frac{1}{2s_{i-1}}} \left(  \mnorm{r^2\Delta u}_{r_{i-1}, 2s_{i-1}} + \mnorm{u}_{r_{i-1}, 2s_{i-1}}\right).
\]
Iterating this inequality yields
\[
\mnorm{u  }_{r_{i}, 2s_{i}}^{} \leq C(n, \mu,m)^{\sum_{j=0}^{i-1}\frac{1}{s_{j}}}e^{\sqrt{K}r\sum_{j=0}^{i-1}\frac{1}{2s_{j}} }  \sum_{j=0}^{i} \frac{i !}{j! (i-j)!} \mnorm{(r^2\Delta)^j u}_{r, 2}.
\]
Since $\sum_{j=0}^\infty \frac{1}{s_j} = \frac{\mu}{2}$, we have 
\[
\mnorm{u  }_{r_{i}, 2s_{i}}^{} \leq C(n, \mu,m)^{\frac{\mu}{2}}e^{\sqrt{K}r \frac{\mu}{4} }  \sum_{j=0}^{i} \frac{i !}{j! (i-j)!} \mnorm{(r^2\Delta)^j u}_{r, 2}.
\]
Similarly, using $\Delta u$ instead of $u$ yields
\[
\mnorm{ r^2 \Delta u  }_{r_{i}, 2s_{i}}^{} \leq C(n, \mu,m)^{\frac{\mu}{2}}e^{\sqrt{K}r \frac{\mu}{4} }  \sum_{j=0}^{i} \frac{i !}{j! (i-j)!} \mnorm{(r^2\Delta)^{j+1} u}_{r, 2}.
\]
Since $s_i \to \infty$, for any $p>0$, we can choose $m$ large enough so that $s_m > p$. Therefore, by the multiplicative Sobolev inequality (\ref{multiplicative sobolev inequality with averaged norm}), where we take 
$$\tau_0 = \frac{1}{2} - \frac{1}{p},$$ 
so that $(1+2\tau_0)q = 2$, we have:
\begin{lem}\label{lem: l2 multiplicative sobolev inequality} For any function $u$ with $\norm{\Delta^i u}_{r,2} < \infty$ for $i = 0, 1,2,...,m+1$, 
\begin{equation}\label{L2 multiplicative sobolev inequality}
\mnorm{u}_{\frac{r}{2}, \infty} \leq C(n, \mu , p) e^{C(n, \mu, p)\sqrt{K}r} \mnorm{u}_{r,2}^{\alpha} \left(
\sum_{j=0}^{m+1} \frac{(m+1) !}{j! (m+1-j)!} \mnorm{(r^2\Delta)^{j} u}_{r, 2}  \right)^{1-\alpha} ,
\end{equation}
where $m \geq \log_{\frac{\mu}{\mu-2}} \frac{p}{2} > m-1$, $p > \frac{\mu}{2}$, $\mu$ is as in Lemma \ref{lem: Sobolev inequality} and $\alpha$ is defined in (\ref{eqn: definition of alpha}). 
\end{lem}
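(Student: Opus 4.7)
The plan is to derive the lemma by combining the multiplicative Sobolev inequality (\ref{multiplicative sobolev inequality with averaged norm}) with the $L^{2s_i}$ iteration bounds established just above. First I would fix parameters: take $\tau_0 = \tfrac{1}{2}-\tfrac{1}{p}$ so that $(1+2\tau_0)q = 2$, collapsing the first factor of (\ref{multiplicative sobolev inequality with averaged norm}) into an $L^2$ average and matching the $\mnorm{u}_{r,2}^{\alpha}$ factor in the target. Choose $m$ minimal with $s_m = \bigl(\tfrac{\mu}{\mu-2}\bigr)^m \geq p/2$; then Jensen's inequality gives $\mnorm{\cdot}_{\rho, p} \leq \mnorm{\cdot}_{\rho, 2 s_m}$ on any concentric ball, so that the $L^p$ factors below can be absorbed into $L^{2s_m}$ factors controlled by the iteration.

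Next, I would build a smooth cutoff $\phi$ from the distance-like function $f$ as in Section \ref{construction of cut-off function}, with $\phi \equiv 1$ on $B_{r/2}$, supported in $B_{5r/8}$, and $|\n\phi|\leq C/r$, $|\Delta\phi| \leq C\sqrt{1+K}/r$. Then $u\phi \in W_0^{2,2}(B_{r_m})$ with $r_m = 3r/4$, and applying (\ref{multiplicative sobolev inequality with averaged norm}) to $u\phi$ on the ball of radius $r_m$ yields
\[
\mnorm{u}_{r/2,\infty} \leq \mnorm{u\phi}_{r_m,\infty} \leq C\,e^{C\sqrt{K}r}\,\mnorm{u\phi}_{r_m,2}^{\alpha}\bigl(r_m^2\,\mnorm{\Delta(u\phi)}_{r_m,p} + \mnorm{u\phi}_{r_m,p}\bigr)^{1-\alpha}.
\]
The first inequality uses $\phi\equiv 1$ on $B_{r/2}$, and Bishop-Gromov volume comparison bounds $V(r)/V(r_m)$ by a multiple of $e^{C\sqrt{K}r}$, so that $\mnorm{u\phi}_{r_m,2} \leq C\,e^{C\sqrt{K}r}\,\mnorm{u}_{r,2}$, which is the required $\mnorm{u}_{r,2}^{\alpha}$ factor.

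To control the remaining $L^p$ expression I would expand $\Delta(u\phi) = \phi\Delta u + 2\langle\n u,\n\phi\rangle + u\Delta\phi$ and handle the three pieces separately. The piece $\mnorm{\phi\Delta u}_{r_m,p}$ is dominated by Jensen by $r^{-2}\mnorm{r^2\Delta u}_{r_m,2s_m}$, and the iteration (applied to $\Delta u$) controls it by $r^{-2}\sum_{j=0}^{m}\binom{m}{j}\mnorm{(r^2\Delta)^{j+1}u}_{r,2}$; multiplication by $r_m^2$ then produces the target-sum terms with indices $j=1,\ldots,m+1$. Together with $u\Delta\phi$ and $\mnorm{u\phi}_{r_m,p}$, which the iteration bounds by $\sum_{j=0}^{m}\binom{m}{j}\mnorm{(r^2\Delta)^j u}_{r,2}$ (the $\sqrt{1+K}$ from $|\Delta\phi|$ absorbed into the exponential), Pascal's identity $\binom{m}{j}+\binom{m}{j-1}=\binom{m+1}{j}$ assembles the two contributions into the sum $\sum_{j=0}^{m+1}\binom{m+1}{j}\mnorm{(r^2\Delta)^j u}_{r,2}$ of the statement.

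The main obstacle is the gradient cross term $2\langle\n u,\n\phi\rangle$, since the iteration produces no direct $L^p$ control on $\n u$. I would handle it by integration by parts on the annular support of $\n\phi$: for an auxiliary cutoff $\chi$ equal to $1$ on $\mathrm{supp}(\n\phi)$ with slightly larger support inside $B_{3r/4}\setminus B_{r/2-\varepsilon}$, the identity
\[
\int \chi^{2}|\n u|^{p} = -\int u\,\mathrm{div}\bigl(\chi^{2}|\n u|^{p-2}\n u\bigr)
\]
combined with Young's inequality reduces $\mnorm{\n u}_{\cdot,p}$ on the annulus to a combination of $\mnorm{u}_{\cdot,2s_m}$ and $\mnorm{\Delta u}_{\cdot,2s_m}$, both already controlled by the iteration (at the cost of slightly enlarging the ball, which is harmless since we still stay inside $B_r$). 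Once this is done, collecting the pieces and absorbing all exponential factors into a single $e^{C\sqrt{K}r}$ produces the stated inequality.
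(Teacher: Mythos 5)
Your overall architecture matches the paper's: fix $\tau_0=\tfrac12-\tfrac1p$ so the first factor of (\ref{multiplicative sobolev inequality with averaged norm}) becomes the $L^2$ average, use the $L^{2s_i}$ iteration (for $u$ and for $\Delta u$) to dominate the $L^p$ factors via Jensen since $2s_m\geq p$, and assemble the binomial coefficients with Pascal's identity; those parts, and the volume-comparison handling of the $\mnorm{u}_{r,2}^\alpha$ factor, are consistent with what the paper does. The difference is in how you localize: you apply the multiplicative Sobolev inequality as a black box to $u\phi$ and then must estimate $\mnorm{\Delta(u\phi)}_{r_m,p}$, which produces the cross term $2\langle\n u,\n\phi\rangle$ in $L^p$ — a term the paper's route never generates, because there the localization is built into the Moser-type iteration itself (the $L^2$ Sobolev inequality is applied to $|u|^{s}\phi$ at each step, where the only second-order quantity that appears is $\Delta|u|\leq|\Delta u|$ and all cutoff-gradient terms are absorbed at the $L^2$ level).

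The treatment you propose for that cross term is where the argument breaks. The identity $\int\chi^2|\n u|^p=-\int u\,\mathrm{div}\bigl(\chi^2|\n u|^{p-2}\n u\bigr)$ produces, besides $\chi^2|\n u|^{p-2}\Delta u$ and a harmless $\n\chi$ term, the term $(p-2)\chi^2|\n u|^{p-3}\langle\n|\n u|,\n u\rangle$, which involves the full Hessian of $u$; there is no pointwise bound of $|\n^2 u|$ by $|\Delta u|$, and under a bare Ricci lower bound there is no elementary $L^p$ Calder\'on--Zygmund or Gagliardo--Nirenberg estimate available to convert $\mnorm{\n u}_{\cdot,p}$ (with $p>\mu/2$, so typically $p>2$) into $\mnorm{u}_{\cdot,2s_m}$ and $\mnorm{\Delta u}_{\cdot,2s_m}$; such interpolation is only cheap at the $L^2$ level. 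So as written the reduction of the gradient term does not close, and this is a genuine gap. The natural repair is exactly the paper's mechanism: instead of applying the compact-support multiplicative Sobolev inequality to $u\phi$, rerun its $\tau_i$-iteration with cutoffs on shrinking radii (as you already do for the $s_i$-iteration), using $\int|u|^{2\tau}|\n|u||^2=-\tfrac1{1+2\tau}\int|u|^{1+2\tau}\Delta|u|$ and Cauchy--Schwarz absorption of the cutoff gradients; then only $\mnorm{u}_{\cdot,p}$ and $\mnorm{\Delta u}_{\cdot,p}$ on larger balls appear on the right, never an $L^p$ norm of $\n u$, and the rest of your bookkeeping goes through.
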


\subsubsection{A mean value inequality for the biharmonic heat equation.}
Now let $u$ be a solution of the biharmonic heat equation. 
By Lemma \ref{lem: l2 estimates for poly-Laplacian}, where we take $R = 2\Lambda r$, and note that 
\[
B_r \subset D_{\Lambda r}, 
\]we have 
\[
\int_{B_r} | (r^2 \Delta)^i u|^{2}(x,t) \leq C_{3,i} \frac{1}{r^4} \int_0^t \int_{D_{2 \Lambda r}} |u|^2, \quad i= 0,1,...,m+1,
\]
where 
%\[
%C_{3,i} = C(n,i)\left(\max \set{1,K}^2 r^4 + 1 + \frac{r^4}{t}\right)^{i+1} \quad when \quad K > 0, \quad r > \Lambda^{-1}R_0;
%\]
\[
C_{3,i} = C(n,i)\left(\max \set{R_0, r}^2 \max \set{1,K} + \frac{r^4}{t}\right)^{i+1} \quad when \quad K > 0;
\]
\[
C_{3,i} = C(n,i,v_0)\left( 1 - \ln \nu(r)+ \frac{r^4}{t}\right)^{i+1} \quad when \quad K =  0,
\]
and $C_{3,i}$ can be chosen as a increasing sequence in $i = 0, 1, 2, ...$.

Hence by (\ref{L2 multiplicative sobolev inequality}) and the volume comparison theorem, 
\begin{equation}
\begin{split}
& |u(x,t)|_{r/2, \infty} \\
 \leq & C(n,\mu,p)e^{C(n,\mu,p)\sqrt{K}r}  \frac{\left(\max \set{1,K}\max \set{R_0,r}^2+\frac{r^2}{\sqrt{t}}\right)^{(m+1)(1-\alpha)+1}}{r^2 \sqrt{V(D_{2\Lambda r})}} \left( \int_0^t \int_{D_{2\Lambda r}} u^2\right)^\frac{1}{2},
\end{split}
\end{equation}
when $K > 0$; 
and 
\begin{equation}
\begin{split}
& |u(x,t)|_{r/2, \infty} \\
 \leq & C(n,\mu,p)  \frac{\left( \sqrt{1- \ln \nu(r)}+\frac{r^2}{\sqrt{t}}\right)^{(m+1)(1-\alpha)+1}}{r^2\sqrt{V(D_{2\Lambda r})}} \left( \int_0^t \int_{D_{2\Lambda r}} u^2\right)^\frac{1}{2},
\end{split}
\end{equation}
when $K = 0$, 
where $m \geq \log_{\frac{\mu}{\mu-2}} \frac{p}{2} > m-1$, and $p > \frac{\mu}{2}$.

Now, choose integer $m$ such that $\frac{\mu}{2} < \left(\frac{\mu}{\mu-2}\right)^m \leq \frac{\mu}{2} + 1$, and take $p = \left(\frac{\mu}{\mu-2}\right)^m $, then we have proved a mean value inequality for the biharmonic heat equation:
\begin{lem}[Mean value inequality]\label{lem: mean value inequality}
 Let $u$ be a solution of the biharmonic heat equation on $D_{2\Lambda r}$, where $Ric \geq -K$ then we have the mean value inequality
\begin{equation}
|u(x,t)|_{r/2, \infty} \leq \frac{\Gamma}{r^2\sqrt{V(D_{2\Lambda r})}} \left( \int_0^t \int_{D_{2\Lambda r}} u^2\right)^\frac{1}{2},
\end{equation}
with
\begin{equation}\label{eqn: definition of Gamma}
\Gamma = C(n,\mu)\left(\max \set{R_0, r}^2 \max \set{1,K}+\frac{r^2}{\sqrt{t}}\right)^{c(\mu)} e^{C(n,\mu)\sqrt{K}r} ,
\end{equation}
where we can take $\mu = n$ when $n \geq 3$ and $\mu > 2$ when $n=2$. Moreover, in the case $K=0$ we can take 
\[
  \Gamma = C(n,\mu)\left(\sqrt{1- \ln \nu(r)}+\frac{r^2}{\sqrt{t}}\right)^{c(\mu)},
\] 
for $r> 0$ where $\nu(r)$ is the volume ratio of the $r$-ball centered at the base point of $f$.
\end{lem}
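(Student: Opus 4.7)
The plan is to combine the two main ingredients already set up in this section: the iterated $L^2$ estimate on $(r^2\Delta)^i u$ from Lemma \ref{lem: l2 estimates for poly-Laplacian}, and the multiplicative Sobolev inequality (\ref{L2 multiplicative sobolev inequality}) from Lemma \ref{lem: l2 multiplicative sobolev inequality}. The estimate in the former is stated on the sets $D_r$ determined by the distance-like function $f$, while the latter is stated on geodesic balls $B_r$; the inclusion $B_r\subset D_{\Lambda r}\subset D_{2\Lambda r}$ from (\ref{equivalence of balls}) is what allows us to pass between them.

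First I would apply Lemma \ref{lem: l2 estimates for poly-Laplacian} with $R$ replaced by $2\Lambda r$, and with $T$ taken to be the current time $t$ (the RHS is monotone in $T$). Combining this with $B_r\subset D_{\Lambda r}$ and multiplying through by $r^{4i}$ to switch from $\Delta^i u$ to $(r^2\Delta)^i u$, the $r^{4i}/t^{i+1}$ factor is absorbed into the $(i{+}1)$st power, yielding for $i=0,1,\ldots,m+1$
\[
\int_{B_r} |(r^2\Delta)^i u|^2(\cdot,t) \;\leq\; C_{3,i}\, r^{-4} \int_0^t\!\int_{D_{2\Lambda r}} u^2,
\]
with $C_{3,i}$ of the form displayed in the excerpt (using the $K=0$ variant involving $1-\ln\nu_p(r)$ in the Ricci-nonnegative case).

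Next I would substitute these bounds into the RHS of the multiplicative Sobolev inequality (\ref{L2 multiplicative sobolev inequality}), applied to $u(\cdot,t)$ on $B_r$. The sum over $j=0,\ldots,m+1$ is controlled by $C(n,\mu)\,C_{3,m+1}^{1/2} r^{-2} V(B_r)^{-1/2}\bigl(\int_0^t\!\int_{D_{2\Lambda r}} u^2\bigr)^{1/2}$ using the monotonicity of $C_{3,i}$ in $i$ and the binomial coefficient bound; the $\mnorm{u}_{r,2}^\alpha$ factor is estimated by the same quantity via the $i=0$ case. To ensure the resulting exponent of $C_{3,m+1}$ depends only on $\mu$, I would make the discrete choice $m = \lceil \log_{\mu/(\mu-2)}(\mu/2)\rceil$ and $p = (\mu/(\mu-2))^m$, so that $\alpha\in(0,1)$ given by (\ref{eqn: definition of alpha}) is a function of $\mu$ alone, and $(m+1)(1-\alpha)+1$ becomes the constant $c(\mu)$ appearing in (\ref{eqn: definition of Gamma}).

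Finally, I would convert $V(B_r)$ to $V(D_{2\Lambda r})$ via the Bishop-Gromov volume comparison theorem, which costs a multiplicative factor of $e^{C(n)\sqrt{K}r}$ absorbed into the existing exponential. The $K=0$ refinement follows by the same chain of inequalities using the $1-\ln\nu_p(r)$ form of $C_{3,i}$; in that setting no exponential in $\sqrt{K}r$ appears, so one recovers the second form of $\Gamma$. I do not expect a serious obstacle here: the work has effectively been done in Lemmas \ref{lem: l2 estimates for poly-Laplacian} and \ref{lem: l2 multiplicative sobolev inequality}, and the remaining task is the bookkeeping of exponents so that the final power of the bracket in $\Gamma$ is a single constant $c(\mu)$, which is guaranteed by the fact that $m$ is chosen in a bounded range depending only on $\mu$.
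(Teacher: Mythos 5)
Your proposal is correct and follows essentially the same route as the paper: apply Lemma \ref{lem: l2 estimates for poly-Laplacian} with $R=2\Lambda r$ and the inclusion $B_r\subset D_{\Lambda r}$ to bound $\int_{B_r}|(r^2\Delta)^i u|^2$ by $C_{3,i}r^{-4}\int_0^t\int_{D_{2\Lambda r}}u^2$, feed these into the multiplicative Sobolev inequality (\ref{L2 multiplicative sobolev inequality}) together with volume comparison, and fix $m$ and $p$ in a range determined by $\mu$ so the exponent collapses to $c(\mu)$. The bookkeeping choices (monotone $C_{3,i}$, handling $\mnorm{u}_{r,2}^\alpha$ via the $i=0$ case, the $K=0$ variant) all match the paper's argument.
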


\section{Integral decay estimates for $L^2$ solutions.}
In this section we first study the growth of a weighted $L^2$ integral of the solution of the biharmonic heat equation, we can show that this integral is monotone in time if the weight functions are properly chosen and if the solution has certain growth rate. This generalize the corresponding formula for the heat equation to fourth order. 
As a direct application, we assume $u$ is an $L^2(M)$ solution of the biharmonic heat equation, and assume that the Ricci lower bound has at most quadratic growth, then we obtain exponential decay estimates for $u$ in $L^2$ form. 

\subsection{Growth of a weighted $L^2$ integral}
The following is a main technical lemma for the rest of this article, it will be applied with different choices of weight functions $\xi$ and $G$. 
\begin{lem}\label{time derivative of weighted l2 integral}
Suppose $Ric(x) \geq - K(r(x))$ for some nondecreasing function $K(r)$. Let $R > \Lambda R_0$ and $0< \rho$.  For any $k \geq 4$, let $\phi$ be a cut-off function constructed as in section \ref{construction of cut-off function},  satisfying $\phi = 1$ on $D_R$ and $\phi = 0$ on $M\backslash D_{R+\rho}$. And let $\xi = \xi(f(x),t)$ be a $C^1$ function whose Laplacian exists a.e. and let $G(x,t)$ be a Lipschitz function satisfying
\[
|\n \xi|^2(x, t) \leq G(x, t),
\]
then for any solution $u$ of the biharmonic heat equation, we have 
\begin{equation}
\begin{split}
\partial_t \int u^2 e^\xi \phi^2 \leq  & \int u^2 e^\xi \phi^2 \left(\partial_t \xi + C G^2 + C |\n G|^2 G^{-1} + C |\Delta \xi|^2 \right) \\
& + \frac{C(n,k)(1+K(R+ \rho))}{\rho^2} \int_{D_{R+\rho} \backslash D_{R}} u^2 e^{\xi} \phi^{2-4/k},
\end{split}
\end{equation}
where $C$ is some universal constant. 
\end{lem}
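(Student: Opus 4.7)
The plan is to differentiate the weighted $L^2$ integral in time, apply the biharmonic equation $u_t = -\Delta^2 u$, and integrate by parts twice. Writing $\Phi := e^\xi \phi^2$ and noting that boundary terms vanish because $\phi$ is compactly supported,
\[
\partial_t \int u^2 \Phi = \int u^2 \Phi\,\partial_t\xi - 2\int \Delta u\,\Delta(u\Phi).
\]
Expanding $\Delta(u\Phi) = \Phi\,\Delta u + 2\nabla u\cdot\nabla\Phi + u\,\Delta\Phi$ yields
\[
\partial_t \int u^2 \Phi = \int u^2\Phi\,\partial_t\xi - 2\int \Phi(\Delta u)^2 - 4\int \Delta u\,\nabla u\cdot\nabla\Phi - 2\int u\,\Delta u\,\Delta\Phi,
\]
and the task reduces to absorbing the two cross terms into a small multiple of $-2\int \Phi(\Delta u)^2$, so that only $u^2$-integrals against the advertised weights remain.

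For the $u\,\Delta u\,\Delta\Phi$ term Young's inequality immediately gives a bound by $\epsilon\int\Phi(\Delta u)^2 + \epsilon^{-1}\int u^2|\Delta\Phi|^2/\Phi$. The $\Delta u\,\nabla u\cdot\nabla\Phi$ term instead leaves, after Young, an expression of the form $\int|\nabla u|^2|\nabla\Phi|^2/\Phi$, which still carries a derivative of $u$. To remove it I apply the auxiliary identity
\[
\int |\nabla u|^2 \Psi \leq -2\int u\,\Psi\,\Delta u + \int u^2 \frac{|\nabla\Psi|^2}{\Psi},
\]
obtained from $\int|\nabla u|^2\Psi = -\int u\Psi\Delta u - \int u\,\nabla u\cdot\nabla\Psi$ followed by Young on the second piece; then one more Young step converts $-2\int u\Psi\Delta u$ into another $\Phi(\Delta u)^2$ contribution (absorbed) plus $\int u^2\Psi^2/\Phi$. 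The essential choice is $\Psi = G\Phi$: because $G$ is only assumed Lipschitz, any step that differentiates $G$ twice must be avoided, and with this choice $|\nabla(G\Phi)|^2/(G\Phi) \leq 2G|\nabla\Phi|^2/\Phi + 2\Phi|\nabla G|^2/G$, which is precisely how the coefficient $|\nabla G|^2/G$ arises in the final bound.

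What remains is bookkeeping. From $\nabla\Phi = e^\xi\phi^2\nabla\xi + 2e^\xi\phi\nabla\phi$ and $\Delta\Phi = e^\xi\phi^2(|\nabla\xi|^2 + \Delta\xi) + 4e^\xi\phi\,\nabla\xi\cdot\nabla\phi + 2e^\xi(|\nabla\phi|^2 + \phi\Delta\phi)$, each integrand splits into an interior piece (all $\nabla\phi, \Delta\phi$ factors absent) and a boundary-strip piece supported on $D_{R+\rho}\setminus D_R$. On the interior $|\nabla\Phi|^2/\Phi \leq |\nabla\xi|^2\Phi \leq G\Phi$ and $|\Delta\Phi|^2/\Phi \leq 2(|\nabla\xi|^4 + |\Delta\xi|^2)\Phi \leq 2(G^2+|\Delta\xi|^2)\Phi$, yielding after all the absorptions the interior coefficient $C(\partial_t\xi + G^2 + |\nabla G|^2/G + |\Delta\xi|^2)$. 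On the boundary strip the cutoff estimates from Section 2, namely $|\nabla\phi|\leq C(n,k)\rho^{-1}\phi^{1-1/k}$ and $|\Delta\phi|\leq C(n,k)\rho^{-1}\sqrt{1+K(R+\rho)}\,\phi^{1-2/k}$, produce the boundary term with coefficient $C(n,k)(1+K(R+\rho))/\rho^2$ and the weight $\phi^{2-4/k}$ (the $\phi$-powers combining exactly as prescribed by the cutoff bounds). The principal obstacle is the careful bookkeeping needed to respect the Lipschitz-only hypothesis on $G$ at every step; the choice $\Psi = G\Phi$ in the identity above is what makes this possible, and the remainder is a systematic but lengthy application of Young's inequality.
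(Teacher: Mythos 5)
Your skeleton is the same as the paper's: differentiate in time, integrate by parts twice, use Young/Cauchy--Schwarz, and then remove the leftover $\int |\nabla u|^2(\cdots)$ integrals by one more integration by parts against a weight whose \emph{first} derivatives are controlled. Your auxiliary identity with $\Psi = G\Phi$ is exactly the paper's treatment of its interior term $I=\int|\nabla u|^2|\nabla \xi|^2 e^{\xi}\phi^2$, including the reason the quotient $|\nabla G|^2G^{-1}$ appears (Lipschitz $G$, never differentiate $G$ twice).

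The gap is on the boundary strip. After Young, the cross term leaves $\int|\nabla u|^2\,|\nabla\Phi|^2/\Phi$, and $|\nabla\Phi|^2/\Phi \le 2|\nabla\xi|^2 e^{\xi}\phi^2 + 8 e^{\xi}|\nabla\phi|^2$. The second piece is \emph{not} dominated by $G\Phi$ (nothing prevents $G$ from being small or zero on the strip), so the choice $\Psi=G\Phi$ does not cover it; and since it still carries $|\nabla u|^2$, the cutoff bounds of Section 2 cannot by themselves convert it into a $u^2$-integral, contrary to what your boundary-strip paragraph suggests. You need a second application of your identity there, and the order of operations matters: first trade $|\nabla\phi|^2$ for $C(n,k)\rho^{-2}\phi^{2-2/k}$ using the cutoff gradient estimate, and only then integrate by parts with the weight $\Psi=\rho^{-2}e^{\xi}\phi^{2-2/k}$, whose gradient involves only $\nabla\xi$ and $\nabla\phi$; integrating by parts before this replacement would require the Hessian of $\phi$, which is not controlled (only $\Delta\phi$ is). This is precisely the paper's term $II$, where in addition the weight is built from the truncated function $\xi_R$ (equal to $\xi(R,t)$ on $D_R$), so that $\nabla\xi_R$ vanishes on $D_R$; some such device is needed to substantiate your claim that the resulting $u^2$-terms are supported on $D_{R+\rho}\setminus D_R$ with weight $\phi^{2-4/k}$, since the terms produced by the strip integration by parts (for instance $\rho^{-2}u^2e^{\xi}\phi^{2-4/k}$ from the $u\,\Delta u$ Cauchy--Schwarz, and $\rho^{-2}u^2e^{\xi}G\phi^{2-2/k}$ from $|\nabla\Psi|^2/\Psi$) a priori live on all of $D_{R+\rho}$, and only the $G$-bearing ones can be folded into the interior $G^2\phi^2$ term. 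So the strategy is the right one, but the strip gradient term needs its own integration-by-parts step, with the $\phi$-power replacement done first and with a truncation of $\xi$ if you want the stated localization of the boundary term.
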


\begin{proof}
\[
\begin{split}
& \frac{d}{dt}\int u^2 e^\xi \phi^2 \\
 = & \int - 2 u \Delta ^2 u e^\xi \phi^2 + u^2 e^\xi \phi^2 \partial_t \xi \\
= & \int 2 \langle \n \Delta u, \n u\rangle e^\xi \phi^2 +2 \langle \n \Delta u, \n \xi\rangle u e^\xi \phi^2 + 4 \langle \n \Delta u, \n \phi \rangle u e^\xi \phi  + \int u^2 e^\xi \phi^2 \partial_t \xi \\
=& \int - 2 (\Delta u)^2 e^\xi \phi^2 - 2 \langle \n u, \n \xi \rangle \Delta u e^{\xi} \phi^2 - 4 \langle \n u, \n \phi \rangle \Delta u e^\xi \phi \\
& + \int - 2 u \Delta u \Delta \xi e^\xi \phi^2 - 2\langle \n u, \n \xi \rangle \Delta u e^{\xi} \phi^2 - 2|\n \xi|^2 u \Delta u e^{\xi} \phi^2 -   4\langle \n \xi, \n \phi\rangle u \Delta u e^{\xi} \phi \\
&+ \int -4 \langle \n u, \n \phi \rangle \Delta u e^{\xi} \phi - 4\langle \n \phi, \n \xi \rangle u \Delta u e^\xi \phi   - 4 |\n \phi|^2 u \Delta u e^\xi \\
&- 4 \int \Delta u \Delta \phi u e^\xi \phi + \int u^2 e^\xi \phi^2 \partial_t \xi,
\end{split}
\]
where we have applied integration by parts twice. 

Apply the Cauchy-Schwarz inequality to each term containing $\Delta u$, we get
\[
\begin{split}
& \frac{d}{dt}\int u^2 e^\xi \phi^2 - \int u^2 e^\xi \phi^2 \partial_t \xi \\
\leq & \int - 2 (\Delta u)^2 e^\xi \phi^2 + 10\epsilon \int (\Delta u)^2 e^\xi \phi^2 + 2\epsilon^{-1} \int |\n u|^2 |\n \xi|^2 e^{\xi} \phi^2 
+ 8\epsilon^{-1}\int |\n u|^2 e^\xi |\n \phi|^2 \\
& + \epsilon^{-1}\int u^2 e^{\xi } \left( |\n \xi|^4  \phi^2 + |\Delta \xi|^2 + 8  |\n \xi|^2 |\n \phi|^2 +  4|\n \phi|^4 \phi^{-2}   +  4|\Delta \phi|^2\right)
\end{split}
\]

Choose $\epsilon = \frac{1}{10}$, and apply the Cauchy-Schwarz inequality to
\[2 |\n \xi|^2 |\n \phi|^2 \leq |\n \xi|^4 + |\n \phi|^4,\]
 we have 
\begin{equation}\label{estimate of the time derivative by I and II}
\begin{split}
 &\frac{d}{dt}\int u^2 e^\xi \phi^2 - \int u^2 e^\xi \phi^2 \partial_t \xi \\
\leq & - \int  (\Delta u)^2 e^\xi \phi^2  + 80\int u^2 e^{\xi} \left( |\n \xi|^4  \phi^2  + |\Delta \xi|^2 \phi^2+  |\n \phi|^4 \phi^{-2}  +  |\Delta \phi|^2 \right) \\
& + I + II,  
\end{split}
\end{equation}
where 
\[
I =  20 \int |\n u|^2 |\n \xi|^2 e^{\xi} \phi^2, \quad  
II = 80 \int |\n u|^2 e^\xi |\n \phi|^2. \\
\]

Let's first estimate $I$. By the assumption we have
\[
I =  20 \int |\n u|^2 |\n \xi|^2 e^\xi \phi^2 \leq 20 \int |\n u|^2 G e^\xi \phi^2.
\]
Integration by parts yields
\[
\begin{split}
I \leq& 20 \int |\n u|^2 G e^\xi \phi^2  \\
= &  20 \int - u \Delta u G e^\xi \phi^2 - u \langle \n u, \n G \rangle e^\xi \phi^2 - u \langle \n u, \n \xi \rangle G e^\xi \phi^2  - 2 u \langle\n u, \n \phi \rangle G e^\xi \phi  .
\end{split}
\]
Using Cauchy-Schwarz inequality for each term, we have 
\[
\begin{split}
I \leq& 20 \int |\n u|^2 G e^\xi \phi^2 \\
\leq & \frac{1}{8} \int |\Delta u|^2 e^ \xi \phi^2 + \int 800  u^2 e^\xi G^2 \phi^2  +  |\n u|^2 G e^\xi \phi^2 + 400 u^2 |\n G |^2 G^{-1} e^\xi \phi^2  \\
& + \int 9  |\n u|^2 G e^\xi \phi^2  + \int 25  u^2 G^2 e^\xi \phi^2   + 80  u^2 G e^\xi |\n \phi|^2. 
\end{split}
\]
After absorbing the terms containing $\int |\n u|^2 G e^\xi \phi^2$ by the LHS, we have
\begin{equation}\label{estimate of I}
\begin{split}
I \leq & \frac{1}{4} \int |\Delta u|^2 e^ \xi \phi^2  + C \int u^2 e^\xi \left( G^2 \phi^2 + |\n G |^2 G^{-1}  \phi^2  
 +  G  |\n \phi|^2 \right),
\end{split}
\end{equation}
where $C$ is some universal constant. 

Next we estimate $II$. Recall that $\xi(x, t) = \xi(f(x), t)$ is radial in the gradient direction of $f$, hence we can define
\[
\xi_R(x, t) = \begin{cases}
\xi(R, t), & x \in D_R; \\
\xi(x, t), & x \in M \backslash D_R.
\end{cases}
\]
Then $\xi_R$ is a Lipschitz function with $|\n \xi_R| \leq |\n \xi|(1-\chi_{D_R})$. 

By (\ref{gradient of the cut-off function}), the cut-off function $\phi$ satisfies
\[
|\n \phi| \leq \frac{C(n,k)}{\rho} \phi^{1-1/k}, 
\]
and $|\n \phi| = 0$ on $D_R$. Hence we have
\[
80^{-1} II = \int |\n u|^2 e^\xi |\n \phi|^2 \leq \frac{C(n,k)}{\rho^2} \int |\n u|^2 e^{\xi_R} \phi^{2-2/k}. 
\]
By integration by parts and the Cauchy-Schwarz inequality, we have
\[
\begin{split}
& \int |\n u|^2 e^{\xi_R} \phi^{2-2/k} \\
= & \int - u \Delta u e^{\xi_R} \phi^{2-2/k} - u \langle \n u, \n \xi_R \rangle e^{\xi_R} \phi^{2-2/k} - (2-2/k) u \langle \n u, \n \phi\rangle e^{\xi_R} \phi^{1-2/k} \\
\leq & \epsilon \int |\Delta u|^2 e^{\xi_R} \phi^2 + \epsilon^{-1} \int u^2 e^{\xi_R} \phi^{2-4/k} + \frac{1}{4} \int |\n u|^2 e^{\xi_R} \phi^{2-2/k} \\
&+ \int u^2 e^{\xi_R} |\n \xi_R|^2 \phi^{2-2/k} + \frac{1}{4} \int |\n u|^2 e^{\xi_R} \phi^{2-2/k} + C(k)\int u^2 e^{\xi_R} |\n \phi|^2 \phi^{-2/k},
\end{split}
\]
thus
\[
\begin{split}
& \int |\n u|^2 e^{\xi_R} \phi^{2-2/k} \\
\leq & 2 \epsilon \int |\Delta u|^2 e^{\xi_R} \phi^2 +  \int u^2 e^{\xi_R} (2\epsilon^{-1} \phi^{2-4/k} + 2 |\n \xi_R|^2 \phi^{2-2/k} + 2 C(k) |\n \phi |^2\phi^{-2/k}).
\end{split}
\]
Now choose $\epsilon>0$ small enough depending on $n, k$, we have the estimate
\begin{equation}\label{estimate of II}
II \leq \frac{1}{4} \int |\Delta u|^2 e^{\xi_R} \phi^2 + \frac{C(n, k)}{\rho^2 } \int u^2 e^{\xi_R} ( \phi^{2-4/k} +  |\n \xi_R|^2 \phi^{2-2/k} +  |\n \phi |^2\phi^{-2/k}).
\end{equation}

By (\ref{estimate of the time derivative by I and II}), (\ref{estimate of I}) and (\ref{estimate of II}), we have

\begin{equation}
\begin{split}
 &\partial_t \int u^2 e^\xi \phi^2 - \int u^2 e^\xi \phi^2 \partial_t \xi \\
\leq &  40\int  u^2 |\n \phi|^4 \phi^{-2} e^\xi + u^2 e^\xi |\Delta \phi|^2 + u^2 e^\xi  |\Delta \xi|^2 \phi^2 \\
& + C \int u^2 e^\xi \left( G^2 \phi^2 + |\n G |^2 G^{-1}  \phi^2  +   G  |\n \phi|^2 \right) \\
& + \frac{C(n, k)}{\rho^2 } \int u^2 e^{\xi_R} ( \phi^{2-4/k} +  |\n \xi_R|^2 \phi^{2-2/k} +  |\n \phi |^2\phi^{-2/k}).
\end{split}
\end{equation}
By the Cauchy-Schwarz inequality $|\n \xi_R|^2 \phi^{2-2/k} \leq G^2 \phi^2 + \chi_{D_{R+\rho}\backslash D_R}$.
Apply (\ref{gradient of the cut-off function}) and (\ref{Laplacian of the cut-off function}), we derive the following
\begin{equation}\label{estimate of time derivative}
\begin{split}
\partial_t \int u^2 e^\xi \phi^2 \leq  & \int u^2 e^\xi \phi^2 (\partial_t \xi + C G^2 + C |\n G|^2 G^{-1} + C |\Delta \xi|^2) \\
& + \frac{C(n,k)(1+K(R+\rho))}{\rho^2} \int_{D_{R+\rho} \backslash D_{R}} u^2 e^{\xi} \phi^{2-4/k},
\end{split}
\end{equation}
for some universal constant $C$.

\end{proof}

\subsection{$L^2$ exponential decay estimate}
As a direct application of Lemma \ref{time derivative of weighted l2 integral}, we show that $L^2$ solutions of the biharmonic heat equation with compactly supported initial data have exponential decay in $L^2$ norm. 
\subsubsection{The choice of weight functions}
Recall that 
\[
r(x) \leq f(x) \leq \Lambda(n) r(x), \quad |\n f|(x) \leq \Lambda(n), \quad when \quad r(x) \geq R_0,
\]
where $R_0$ depends on the geometry of $B(p,3)$. Let
\[
R_1 = \Lambda R_0, \quad then \quad B_{R_0} \subset D_{R_1}. 
\]
For any $R > R_1$, and WLOG assume $R > 1$. Define
\[
\xi (x,t) =
\begin{cases}
  - \frac{\left( 2R - R_1 \right)^{4/3}}{A(T - t) ^{1/3}}, & x \in D_{R_1} ,\\
  - \frac{\left( 2R - f(x) \right)^{4/3} \psi(f)}{A(T - t) ^{1/3}}, & x \in D_{\frac{3}{2}R}\backslash D_{R_1} ,\\
  - \frac{ \left( \frac{1}{2}R\right)^{4/3}}{A(T - t) ^{1/3}}, &  x \in M \backslash D_{\frac{3}{2} R}.\\
\end{cases}
\]

\[
G(x, t) = \Lambda^2(n)
\begin{cases}
   \frac{\frac{16}{9} \left( 2R - R_1 \right)^{2/3 }}{A^2(T - t) ^{2/3}}, & x \in D_{R_1} ,\\
   \frac{\left( 2R - f(x) \right)^{2/3 } [\frac{4}{3} \psi(f) - (2R - f) \psi'(f)]^2 }{A^2(T - t) ^{2/3}}, & x \in D_{\frac{3}{2}R} \backslash D_{R_1},\\
   \frac{\frac{16}{9} \left( \frac{1}{2}R\right)^{2/3 }}{A^2(T - t) ^{2/3}}, &  x \in M \backslash D_{\frac{3}{2}R},\\
\end{cases}
\]
where the function $\psi$ is defined by 
\[
\psi(f) = 1 + \frac{\frac{4}{3}(f -R_1)(\frac{3}{2}R - f)^2}{ (2R - R_1)(\frac{3}{2}R - R_1)^2} - \frac{\frac{4}{3}(f - R_1)^2(\frac{3}{2} R - f)}{ (\frac{1}{2}R) (\frac{3}{2}R - R_1)^2},
\]
for $ R_1 < f < \frac{3}{2}R$. Observe that $\xi$ is $C^1$ and $G$ is Lipschitz, and $|\n \xi(x, t)|^2 \leq G(x, t)$ for all $(x, t)$. By direct calculation we can verify that $\frac{2}{3} \leq \psi \leq \frac{4}{3}$ and $|\nabla \psi(f)| \leq C(n) R^{-1}$, similarly we can also verify that the term in the bracket in $G$ is also bounded with gradient  bounded by $C(n)R^{-1}$ for some constant $C(n)$, hence

\[
|\n G(x, t)| \leq
\begin{cases}
   0, & x \in D_{R_1} ,\\
   \frac{C(n)\left( 2R - f(x) \right)^{-1/3 }}{A^2(T - t) ^{2/3}}, & x \in D_{\frac{3}{2}R} \backslash D_{R_1},\\
  0, &  x\in M \backslash D_{\frac{3}{2}R}.\\ 
\end{cases}
\]

\[
|\n G|^2 G^{-1}(x, t) \leq 
\begin{cases}
   0, & x \in D_{R_1} ,\\
   \frac{  C(n) }{A^2(T - t) ^{2/3} (\frac{1}{2}R)^{4/3}}, & x \in D_{\frac{3}{2}} \backslash D_{R_1},\\
  0, &  x\in M \backslash D_{\frac{3}{2}R}.\\ 
\end{cases}
\]
Calculate that
\[
\begin{split}
\Delta \xi = & \frac{- \frac{4}{9} \left( 2R - f \right)^{-2/3} \psi |\n f|^2 + \frac{4}{3}\left( 2R - f \right)^{1/3} \psi \Delta f(x)  }{A(T - t) ^{1/3}} \\
& - \frac{+ \frac{8}{3}(2R - f)^{1/3}\psi' |\n f|^2 + (2R - f)^{4/3}\psi'' |\n f|^2 + (2R - f)^{4/3} \psi' \Delta f}{A(T - t) ^{1/3}}
\end{split}
\]
when $R_1 < f < \frac{3}{2}R$, and $\Delta \xi = 0$ elsewhere. 
%\[
%\Delta \xi (x,t) =
%\begin{cases}
%  0, & x \in D_{R_1} ,\\
%  \frac{- \frac{4}{9} \left( 2R - f \right)^{-2/3} \psi |\n f|^2 + \frac{4}{3}\left( 2R - f \right)^{1/3} \psi \Delta f(x) + \frac{8}{3}(2R - f)^{1/3}\psi' |\n f|^2 + (2R - f)^{4/3}\psi'' |\n f|^2 + (2R - f)^{4/3} \psi' \Delta f }{A(T - t) ^{1/3}}, & x \in D_{\frac{3}{2}R}\backslash D_{R_1} ,\\
%  0, &  x\in M \backslash D_{\frac{3}{2}R}.\\ 
%\end{cases}
%\]
By the properties of $f$ we can obtain the estimate
\[
|\Delta \xi (x,t)| \leq
\begin{cases}
  0, & x \in D_{R_1} ,\\
  \frac{C(n)R^{-2/3} + C(n)\frac{4}{3}\sqrt{1+ K(\frac{3}{2}R)}\left( 2R - f(x) \right)^{1/3 }  }{A(T - t) ^{1/3}}, & x \in D_{\frac{3}{2}R}\backslash D_{R_1} ,\\
  0, &  x\in M \backslash D_{\frac{3}{2}R}.\\ 
\end{cases}
\]

Next we show that we can choose $A$ large enough depending on $n$, so that the quantity
\begin{equation}\label{eqn: definition of N}
\mathcal{N}(\xi, G) : =\partial_t \xi + C G^2 + C |\n G|^2 G^{-1} + C |\Delta \xi|^2
\end{equation}
is non-positive. 

When $f(x) < R_1$, we can choose $A$ large enough depending on $n$, such that
\[
\mathcal{N} = -\frac{\frac{1}{3} (2R - R_1)^{4/3}}{A (T-t)^{4/3} } + \frac{C \Lambda^4 \rbrackets{\frac{4}{3}}^4 (2R - R_1)^{4/3} }{A^4(T-t)^{4/3}} \leq 0.
\]

When $R_1 < f(x) \leq \frac{3}{2}R$, for some constant $C(n)$ we have
\[
\begin{split}
\mathcal{N} \leq & -\frac{\frac{1}{3} (\frac{1}{2}R)^{4/3}}{A (T-t)^{4/3}  } + \frac{C(n) (2R )^{4/3 }}{A^4 (T-t)^{4/3}}  + \frac{C(n) }{A^2 (T-t)^{2/3} R^{4/3}} \\
&+ \frac{ C(n) (1+K(\frac{3}{2}R))(2R)^{2/3 }}{A^2 (T-t)^{2/3}}.
\end{split}
\]
Note that we have assumed $R>1$. When 
\begin{equation}\label{requirement for T}
 T-t < \frac{R}{\left( 1+K(\frac{3}{2}R) \right)^{3/2}},
 \end{equation}
we can choose $A$ large enough depending on $n$ such that $\mathcal{N} < 0$.

When $f > \frac{3}{2}R$, it is easy to check that $\mathcal{N} < 0$ when $A > 10C \Lambda(n)^4$.

\subsubsection{$L^2$ exponential decay.} Now suppose $u(x,t)$ is an $L^2$ solution of the biharmonic heat equation, and the initial data $u(x,0)$ is compactly supported in $D_R$. Suppose also that the Ricci lower bound $K(r)$ has at most quadratic growth in $r$. We derive exponential decay estimate for $u$ in $L^2$ norm, although this is not used in the following sections. 

Suppose (\ref{requirement for T}) holds for $T$ and $R$.
By lemma \ref{time derivative of weighted l2 integral} (with $\rho = R$ when choosing $\phi$), and by the choice of the weight function $\xi$, we have
\[
\partial_t \int u^2 e^\xi \phi^2 \leq   \frac{C(n,k)(1+K(2\rho))}{\rho^2} \int_{D_{2\rho} \backslash D_{\rho}} u^2 e^{\xi} \phi^{2-4/k}.
\]
Integrate on the time interval $[0,\frac{T}{2}]$,
\[
\int u^2 e^\xi \phi^2 (\frac{T}{2}) - \int u^2 e^\xi \phi^2 (0) \leq \frac{TC(n,k)(1+K(2\rho))}{\rho^2} \int_{D_{2\rho} \backslash D_{\rho}} u^2 e^{\xi} \phi^{2-4/k}.
\]
Suppose $K(r) \leq C (1+ r)^2 $ as $r \to \infty$, then we can 
let $\rho \to \infty$, the RHS above goes to $0$ since $u$ is $L^2$, hence
\[
\int u^2 e^\xi  (\frac{T}{2}) \leq \int u^2 e^\xi (0).
\]
Since $u(x,0)$ is compactly supported in $D_R$, we have  
\[
e^{-\frac{(\frac{1}{2}R)^{4/3}}{A(\frac{1}{2}T)^{1/3}}}\int_{M\backslash D_{2R}} u^2 (T/2) \leq e^{- \frac{R^{4/3}}{AT^{1/3}}} \int_{D_R} u^2(0),
\]
hence the exponential decay of $u$ in $L^2$ norm: 
\[
\int_{M\backslash D_{2R}} u^2 (T/2) \leq e^{- \frac{R^{4/3}}{2AT^{1/3}}} \int_{D_R} u^2(0).
\]

%%%%%%%%%%%%%%%%%%%%%%%%%%%%%%%%%%%%%%%%%%%%%%%%%%%%%%%%%%%%%%%%%%%%%%%%%%%%%%%%%%%%

\section{Existence and decay estimates for the fundamental solution}
\subsection{Existence of the $L^2$ biharmonic heat kernel}
The existence of a fundamental solution of the biharmonic heat equation follows from semigroup theory, and the proof is the same as for the heat equation.  Let\rq{}s sketch it here and refer to \cite{G2009} for details. 

The Laplace operator $\Delta$ on $C_0^\infty(M)$ has a unique self-adjoint extension $\Delta_{W_0^{2}(M)}$ with domain
\[
W_0^2(M)  = \set{ u \in H_0^1(M) : \Delta u \in L^2 (M)}.
\]
Where $H_0^1(M)$ is the closure of $C_0^\infty(M)$ in $H^1(M)$. Define
\[
W_0^4(M) = \set{ u \in W_0^2(M) : \Delta u \in W_0^2(M)}.
\]
Define $L$  to be $\Delta_{W_0^2(M)}^2$ restricted on $W_0^4(M)$, then $L$ is the unique self-adjoint extension of $\Delta^2$.
\begin{proof}
For self-adjointness, we only need to show that the domain of $L^*$ is contained in the domain of $L$.
\[Dom(L^*) = \set {u \in L^2(M): \exists f \in L^2(M), s.t. \forall v \in Dom(L), \langle Lv, u\rangle_{L^2} = \langle v,f\rangle_{L^2}  }.
\]
If $u$ is in the domain of $L^*$, then it is a distributional solution of 
\[
\Delta(\Delta u)= f,
\]
for some $f\in L^2(M)$. Hence $\Delta u \in W_0^2(M)$ and $u \in W_0^4(M)$.

To show uniqueness, suppose $L_1$ is another self-adjoint extension of $\Delta^2$. Then the adjoint operator $(\Delta^2)^*$ is an extension of $L_1^* = L_1$. For any $u \in Dom(L_1)$, by the self-adjointness of $L^1$,
\[
L_1 u = L_1^* u = (\Delta^2 )^* u,
\]
hence for any test function $v$,
\[\langle L_1 u , v\rangle _{L^2} = \langle u, \Delta^2 v\rangle _{L^2} = \langle u, Lv\rangle_{L^2}, \]
then $L_1 = L$ since both are self-adjoint. 
\end{proof}

By standard semigroup theory, there is a semigroup $\set{e^{-tL}}_{t\geq0}$, such that for any $f\in L^2(M)$, $e^{-tL} f$ is an $L^2(M)$ solution of the Cauchy problem
\[
(\partial_t -L)e^{-tL}f = 0, \quad \lim_{t \to 0^+} e^{-tL} f = f.
\]
Regularity of the solution $e^{-tL}f$ follows from our $L^2$ estimates in Section \ref{section: l2 estimates and mean value inequality} and Sobolev embedding theorems. Thus by Riesz representation theorem (and by symmetrization), there exists a smooth kernel function $b(x, y,t) : M \times M \times (0, \infty)$, such that 
\[
e^{-tL}f(x) = \int_M b(x, y, t)f(y) dy.
\]
We call the function $b(x, y, t)$ \textbf{biharmonic heat kernel} on $M$.

\begin{rem} The regularity of the semigroup can be proved by an alternative way. One can obtain $L^2$ estimates of $\Delta^{2k} (e^{-t\Delta^2 }f )$ by method of \cite{G2009}, which uses the spectral resolution. Then one can estimate $\Delta^k (e^{-t\Delta^2 }f )$ by similar integration by parts argument as above, which again requires Ricci lower bound to control the Laplacian of cut-off functions. 
\end{rem}

In the following, we show that the biharmonic heat kernel $b(x,y,t)$ has exponential decay under some geometric assumptions, thus the domain of the biharmonic heat semigroup can be enlarged. 

\subsection{Exponential decay for the fundamental solution.}
Now we can use the method of \cite{G2009} and the ingredients we developed in the previous sections to obtain exponential decay estimates for the biharmonic heat kernel.
\subsubsection{$L^2$ estimates for $b(p,y,t)$.}
 First choose a suitable weight function $\xi$, and $G$ depending on $\xi$. Let $f$ be a distance-like function from Lemma \ref{global distance-like function} or \ref{global distance-like function on Ricci nonnegative manifolds},  $0\leq t < T$, define
\begin{equation}\label{xi with exponential decay}
\xi = \begin{cases}
0, & D_R; \\
-\frac{S^{-8/3}(f-R)^4[1 - \frac{8}{3}S^{-1}(f-R-S)]}{A(T-t)^{1/3}} , & D_{R+S} \backslash D_R;\\
- \frac{(f - R)^{4/3}}{A(T-t)^{1/3}}, & M \backslash D_{R+S}.
\end{cases}
\end{equation}
\[
G = \begin{cases}
0, & D_R; \\
\frac{\Lambda(n)^2 S^{-16/3}(f-R)^6[4 - \frac{32}{3}S^{-1}(f-R-S) - \frac{8}{3}S^{-1}(f-R)]^2}{A^2(T-t)^{2/3}} , & D_{R+S} \backslash D_R;\\
 \frac{\frac{16}{9}\Lambda(n)^2(f - R)^{2/3}}{A^2(T-t)^{2/3}}, & M \backslash D_{R+S}.
\end{cases}
\]
We can check by direct calculation that $\xi$ is $C^1$, $G$ is Lipschitz, and $|\n \xi|^2 \leq G$. 

On $(D_{R+S} \backslash D_{R}) \cap (M \backslash B(p, R_0) )$, where $R_0$ is the constant from Lemma \ref{global distance-like function}, observe that the terms in the brackets in the definition of $\xi$ and $G$ are bounded by uniform positive constants from both above and below, and their gradients are bounded by $C(n)S^{-1}$, then by direct calculation we can derive that the quantity $\mathcal{N}$ defined in (\ref{eqn: definition of N}) satisfies
\begin{equation}\label{calculation of N when R is large}
\begin{split}
\mathcal{N} = & \partial_t \xi + C G^2 + C |\n G|^2 G^{-1} + C |\Delta \xi|^2 \\
 \leq & - \frac{\frac{1}{3} S^{-8/3} (f- R)^4}{A(T-t)^{4/3}} + \frac{C(n) S^{-32/3} (f-R)^{12}}{A^4 (T-t)^{4/3}} + \frac{C(n) S^{-16/3} (f-R)^4}{A^2 (T-t)^{2/3}} \\
& + \frac{C(n) (1+K(R+S)) S^{-16/3} (f-R)^6}{A^2 (T-t)^{2/3}},
\end{split}
\end{equation}
for some constant $C(n)$, where we have applied Lemma \ref{global distance-like function} (or Lemma \ref{global distance-like function on Ricci nonnegative manifolds} if $Ric \geq 0$) to control $\Delta f$ and hence $|\Delta \xi|$, the calculation is similar to that of the previous section. 

On $(D_{R+S} \backslash D_{R}) \cap B(p, R_0) $, the last term in (\ref{calculation of N when R is large}) becomes
\[
\frac{C(n) R_0^2 (1+K(C(n)R_0)) S^{-16/3} (f-R)^6}{A^2 (T-t)^{2/3} f^2 } \leq \frac{C(n) R_0^2 (1+K(C(n)R_0)) S^{2/3} }{A^2 (T-t)^{2/3} R^2 } .
\]

We have $\mathcal{N}< 0$ when we choose $A$ large enough depending on $n$, and require that
\begin{equation}\label{upper bound for T 1}
T \leq \min \set{S^4, \quad \frac{S}{(1+K(R+S))^\frac{3}{2}}, \quad \frac{R^3S}{R_0^3(1+K(C(n)R_0))^\frac{3}{2}}}.
\end{equation} 
On $(M \backslash D_{R+S}) \cap (M \backslash B(p, R_0))$,
\[
\begin{split}
\mathcal{N} \leq & - \frac{\frac{1}{3} (f- R)^{4/3}}{A(T-t)^{4/3}} + \frac{C(n) (f-R)^{4/3}}{A^4 (T-t)^{4/3}} + \frac{C(n) (f-R)^{-4/3}}{A^2 (T-t)^{2/3}} \\
& + \frac{C(n) (1+K(f)) (f-R)^{2/3}}{A^2 (T-t)^{2/3}},
\end{split}
\]
for some constant $C(n)$; and on $(M \backslash D_{R+S}) \cap B(p, R_0)$ we have
\[
\begin{split}
\mathcal{N} \leq & - \frac{\frac{1}{3} (f- R)^{4/3}}{A(T-t)^{4/3}} + \frac{C(n) (f-R)^{4/3}}{A^4 (T-t)^{4/3}} + \frac{C(n) (f-R)^{-4/3}}{A^2 (T-t)^{2/3}} \\
& + \frac{C(n) R_0^2(1+K(C(n)R_0)) (f-R)^{2/3}}{A^2 (T-t)^{2/3}f^2}.
\end{split}
\] 
In this case, we can make $\mathcal{N} < 0$ by choosing $A$ large enough depending on $n$, and require that
\begin{equation}\label{upper bound for T 2}
T \leq \min \set{S^4, \quad \inf_{r \geq R+S}  \frac{(r - R)}{(1+K(r))^\frac{3}{2}}, \quad \frac{(R+S)^3S}{R_0^3(1+K(C(n)R_0))^\frac{3}{2}} }.
\end{equation}
Clearly this requirement for $T$ is meaningful only when 
\begin{equation}\label{requirement for K(r)}
K(r) \leq C (1+r)^\frac{2}{3},
\end{equation}
for some constant $C$. 

   Then for any solution $u$ of the biharmonic heat equation that is $L^2$, apply Lemma \ref{time derivative of weighted l2 integral} to get
\[ \partial_t \int u^2 e^\xi \varphi_\rho^2 \leq \frac{C(n,k)(1+K(2\rho))}{\rho^2}\int_{D_{2\rho} \backslash D_\rho} u^2 e^\xi \varphi_\rho^{2-4/k},\]
where $\rho > \Lambda R_0$, $\varphi_\rho$ is a cut-off function constructed as in Section \ref{construction of cut-off function}, which is supported on $D_{2\rho}$ and equals $1$ on $D_\rho$. Integrate in time and then let $\rho \to \infty$, by the assumption (\ref{requirement for K(r)}) on $K(r)$, and the assumption that $u$ is in $L^2$,  
we get
\[
\int_M u^2 e^\xi (t) \leq \int_M u^2e^\xi (0) .
\]
By the mean value inequality Lemma \ref{lem: mean value inequality}, we have
\begin{equation}\label{eqn: control u(p,t)}
\begin{split}
u(p, t)^2 \leq  \gamma_p(R,t) \int_0^t \int_{D_R} u^2 
\leq  \gamma_p(R,t) \int_0^t \int_M u^2 e^\xi \leq t \gamma_p(R,t) \int_M u(x,0)^2 e^{\xi(x,0)},
\end{split}
\end{equation}
where
\begin{equation}\label{definition of gamma}
\gamma_p(R, t) = \frac{C(n, \mu)  (\max \set{R_0, R}^2\max \set{1, K_p(R)} + \frac{R^2}{\sqrt{t}})^{c(\mu)}e^{C(n,\mu)\sqrt{K_p(R)}R} }{ R^4  Vol(D_R)},
\end{equation}
where we specified the base point $p$ of the function $K(r)$.

Now, we take a particular solution of the biharmonic heat equation:  
\begin{equation}\label{eqn: definition of u(x,s)}
u(x, s) = \int_M b(x, y,s)b(p, y, t) e^{-\xi(y,0)} \phi(y) dy,
\end{equation}
where $0\leq \phi \leq 1$ is an arbitrary cut-off function. 
By properties of the fundamental solution, 
\[
u(x,0) = b(x,p,t)e^{-\xi(x, 0)} \phi(x),
\]
which is compactly supported, hence $u(x,t)$ is $L^2$. 
Then by (\ref{eqn: control u(p,t)}) and (\ref{eqn: definition of u(x,s)})
\[
\left(\int_M b(p, y,t)^2 e^{-\xi(y,0)} \phi(y) dy \right)^2 = u(p,t)^2 \leq t \gamma_p(R,t) \int_M b(x,p,t)^2 e^{-\xi(x,0)} \phi^2(x)dx.
\]
By the symmetry of $b(x,y,t)$ and the fact that $\phi \leq 1$,
\[
\int_M b(p, y,t)^2 e^{-\xi(y,0)} \phi(y) dy \leq t \gamma_p(R,t).\]
Then we can take $\phi \to 1$ since the RHS is bounded. Note that  
 $\xi(x,0) = 0$ in $D_R$, thus
%we can choose $f=f_p$, i.e. the distance-like function with base point $p$, then we have
% we can take $x=p$ to get
\begin{equation}\label{eqn: pre-weighted l2 estimate for b}
\int_{D_R} b(p,y,t)^2 dy \leq \int_M b(p, y,t)^2 e^{-\xi(y,0)} dy \leq t \gamma_p(R,t),
\end{equation}
Note that we can take $t=T$ in (\ref{eqn: pre-weighted l2 estimate for b}) since the RHS is bounded, and $T$ can be any positive number satisfying (\ref{upper bound for T 2}). Therefore we have in fact proved that
\begin{equation}\label{eqn: weighted l2 estimate for b}
E_p(t) : = \int_M b(p, y,t)^2 e^{\eta_p(y,t)}  dy < t\gamma_p(R,t),
\end{equation}
for 
\[ 0< t \leq \min \set{S^4, \quad \frac{R^3S}{R_0^3(1+K(C(n)R_0))^\frac{3}{2}}, \quad \inf_{r \geq R+S} \frac{(r - R)}{(1+K(r))^\frac{3}{2}}},
\]
where $\eta_p$ is defined by
\[
\eta_p= \begin{cases}
0, & D_R; \\
\frac{S^{-8/3}(f_p-R)^4[1- \frac{8}{3}S^{-1}(f-R-S)]}{At^{1/3}} , & D_{R+S} \backslash D_R;\\
 \frac{(f_p - R)^{4/3}}{At^{1/3}}, & M \backslash D_{R+S}.
\end{cases}
\]
Note that $\eta_p$ is related to $\xi$ in the following way: if we denote $\xi$ in (\ref{xi with exponential decay}) as $\xi(f(x), t, T, R, S)$, then 
\[ \eta_p = - \xi(f_p(x), 0, t, R, S).
\]
Choose 
\[
G_\eta = \begin{cases}
0, & D_R; \\
\frac{\Lambda(n)^2 S^{-16/3}(f_p-R)^6[4 - \frac{32}{3}S^{-1}(f-R-S) - \frac{8}{3}S^{-1}(f-R)]^2}{A^2t^{2/3}} , & D_{R+S} \backslash D_R;\\
 \frac{\frac{16}{9}\Lambda(n)^2(f_p - R)^{2/3}}{A^2t^{2/3}}, & M \backslash D_{R+S}.
\end{cases}
\]
Then if we calculate the quantity $\mathcal{N}$ with $\eta_p$ and $G_\eta$, we will have $\mathcal{N}< 0$ under the same restraints for the parameters as for $\xi$ and $G$. 

\subsubsection{Pointwise exponential decay.}
By the semigroup property
\[
b(p,q,t) = \int_M b(p,x,\frac{t}{2}) b(x, q, \frac{t}{2}) dx.
\]
By the triangle inequality
\[
d(p,q) \leq d(p,y ) + d(q, y) \leq f_p(y) + f_q(y),
\]
hence
\[
(d(p,q) - 2R)_+ \leq (f_p(y ) - R )_+  + (f_q(y ) - R )_+, \quad \forall y\in M,
\]
%and 
%\[
%(d(p,q) - 2R-S)_+ \leq (f_q(y ) - R )_+, \quad if \quad  f_p(y)\leq R+S.
%\]
we can choose $c(n)$ small enough such that
\[
\frac{c(n) (d(p,q) - 2R -2S)_+^{4/3}}{t^{1/3}} \leq \eta_p(y,t) + \eta_q(y,t), \quad \forall y \in M.
\]
Then by Holder inequality
\[
\begin{split}
| b(p, q,t) | \leq & \int | b(p,x,t/2) | e^{\eta_p(x,t/2)} | b(q, x, t/2) | e^{\eta_q(x, t/2)} e^{-\frac{c(n) (d(p,q) - 2R -2S)_+^{4/3}}{t^{1/3}} } dx \\
\leq & \sqrt{ E_p(t/2) E_q(t/2) } e^{-\frac{c(n) (d(p,q) - 2R -2S)_+^{4/3}}{t^{1/3}} },
\end{split}
\]
where $E_p(t)$ and $E_q(t)$ have been estimated in (\ref{eqn: weighted l2 estimate for b}).

 In conclusion, we have proved: 
\begin{thm}\label{exponential decay estimate general case}
   Suppose on a complete noncompact Riemannian manifold we have $Ric (y) \geq - K_x(d(y,x))$, where $K_x(r)$ are nondecreasing functions for each point $x = p,q$. Assume that $K_x(r) \leq C (1+r)^{2/3}$ for $x = p,q$. Then for any $R >0 $, $S >0$, the biharmonic heat kernel satisfies
\[
| b(p,q,t) | \leq t \sqrt{\gamma_p(R,t) \gamma_q(R,t)} e^{-\frac{c(n) (d(p,q) - 2R -2S)_+^{4/3}}{t^{1/3}} },
\] 
when
\[
t \leq \bar{T} : =\min_{x = p,q}  \set{S^4,  \quad \frac{R^3S}{R_0(x)^3(1+K(C(n)R_0(x)))^\frac{3}{2}},\quad  \inf_{r \geq R+S} \frac{(r - R)}{(1+K_x (r))^\frac{3}{2}}},
\]
where $\gamma_x(R,t)$ is defined in (\ref{definition of gamma}), for $x= p, q$, $R_0(x)$ is a constant depending on $n$, $K_x(1)$ and $Vol(B(x, 1))$.
\end{thm}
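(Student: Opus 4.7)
The plan is to follow the Gaffney-type argument from \cite{G2009}, using the weighted $L^2$ monotonicity from Lemma \ref{time derivative of weighted l2 integral} together with the mean value inequality Lemma \ref{lem: mean value inequality}, specialized to the kernel $b(p,\cdot,t)$ via a clever choice of test solution. First I would fix the weights $\xi$ and $G$ as displayed in (\ref{xi with exponential decay}) and the lines after: $\xi$ vanishes on $D_R$, grows like $-S^{-8/3}(f-R)^4 / (A(T-t)^{1/3})$ on the transition annulus $D_{R+S}\setminus D_R$ (with a gluing polynomial that makes $\xi$ of class $C^1$), and equals $-(f-R)^{4/3}/(A(T-t)^{1/3})$ outside $D_{R+S}$; the function $G$ is chosen Lipschitz so that $|\nabla\xi|^2\leq G$ holds globally. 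Using the bounds on $\nabla f$ and $\Delta f$ from Lemma \ref{global distance-like function} (or Lemma \ref{global distance-like function on Ricci nonnegative manifolds}), a direct computation, broken into the three regions and distinguishing inside/outside $B(p,R_0)$, shows that the quantity $\mathcal{N}(\xi,G)=\partial_t\xi+CG^2+C|\nabla G|^2G^{-1}+C|\Delta\xi|^2$ from (\ref{eqn: definition of N}) is non-positive provided $A$ is chosen large depending only on $n$ and $T$ is restricted as in (\ref{upper bound for T 2}); this is the step where the three competing constraints defining $\bar T$ emerge.

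Next, for any $L^2$ solution $u$ of the biharmonic heat equation, applying Lemma \ref{time derivative of weighted l2 integral} with a cut-off $\varphi_\rho$ of the form built in Section \ref{construction of cut-off function} and integrating in time gives
\[
\int u^2 e^\xi \varphi_\rho^2(t) - \int u^2 e^\xi \varphi_\rho^2(0) \leq \frac{C(n,k)(1+K(2\rho))}{\rho^2}\int_0^t\int_{D_{2\rho}\setminus D_\rho} u^2 e^\xi \varphi_\rho^{2-4/k}.
\]
Under the growth hypothesis $K_x(r)\leq C(1+r)^{2/3}$ the right side tends to $0$ as $\rho\to\infty$ because $e^\xi$ is bounded and $u\in L^2$, yielding $\int u^2 e^\xi(t)\leq\int u^2 e^\xi(0)$. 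Combining with the mean value inequality Lemma \ref{lem: mean value inequality} at the point $p$, I obtain
\[
u(p,t)^2 \leq t\,\gamma_p(R,t)\int_M u(x,0)^2 e^{\xi(x,0)}\,dx,
\]
with $\gamma_p(R,t)$ as in (\ref{definition of gamma}).

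The key trick is then to feed the special solution $u(x,s)=\int_M b(x,y,s)b(p,y,t)e^{-\xi(y,0)}\phi(y)\,dy$ into this estimate, with $\phi$ a smooth compactly supported cut-off. Since $u(x,0)=b(x,p,t)e^{-\xi(x,0)}\phi(x)$ has compact support, $u$ is an $L^2$ solution, and evaluating both sides at $x=p$, $s=t$ yields
\[
\Big(\int_M b(p,y,t)^2 e^{-\xi(y,0)}\phi(y)\,dy\Big)^2 \leq t\,\gamma_p(R,t)\int_M b(x,p,t)^2 e^{-\xi(x,0)}\phi(x)^2\,dx.
\]
Using the symmetry $b(x,p,t)=b(p,x,t)$ and $\phi^2\leq\phi$, then letting $\phi\nearrow 1$ (the right side stays bounded), I obtain the weighted estimate $E_p(t):=\int_M b(p,y,t)^2 e^{\eta_p(y,t)}\,dy \leq t\,\gamma_p(R,t)$, where $\eta_p(y,t)=-\xi(y,0)\big|_{T\to t}$ is the companion weight with parameters $R, S, t$.

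Finally I would invoke the semigroup identity $b(p,q,t)=\int_M b(p,x,t/2)b(x,q,t/2)\,dx$ and the triangle inequality $d(p,q)\leq f_p(y)+f_q(y)$, which implies $(d(p,q)-2R)_+^{4/3}\leq C(\,(f_p-R)_+^{4/3}+(f_q-R)_+^{4/3}\,)$, and after accounting for the transition band of width $S$, $(d(p,q)-2R-2S)_+^{4/3}\leq C(\eta_p+\eta_q)(y,t/2)\cdot t^{1/3}/c(n)$. Writing the semigroup integrand as $|b(p,x,t/2)|e^{\eta_p/2}\cdot|b(q,x,t/2)|e^{\eta_q/2}\cdot e^{-\eta_p/2-\eta_q/2}$ and applying Cauchy--Schwarz gives
\[
|b(p,q,t)| \leq \sqrt{E_p(t/2)\,E_q(t/2)}\;e^{-\frac{c(n)(d(p,q)-2R-2S)_+^{4/3}}{t^{1/3}}},
\]
which after inserting the bound $E_x(t/2)\leq t\,\gamma_x(R,t)$ is exactly the claimed inequality. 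The main obstacle is the case analysis verifying $\mathcal{N}\leq 0$ on all three regions together with the inside/outside-$B(p,R_0)$ dichotomy; every one of the three competing upper bounds on $\bar T$ comes from making a different error term in $\mathcal{N}$ absorbable by $\partial_t\xi$, and keeping the constants dimension-only requires careful tracking through Lemma \ref{global distance-like function}.
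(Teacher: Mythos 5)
Your proposal is correct and follows essentially the same route as the paper's own proof: the same weights $\xi$, $G$ from (\ref{xi with exponential decay}), the same verification that $\mathcal{N}\leq 0$ under the constraints giving $\bar T$, the same monotonicity of $\int u^2 e^\xi$ via Lemma \ref{time derivative of weighted l2 integral}, the same special solution $u(x,s)=\int_M b(x,y,s)b(p,y,t)e^{-\xi(y,0)}\phi(y)\,dy$ combined with the mean value inequality and symmetry to get $E_p(t)\leq t\,\gamma_p(R,t)$, and the same semigroup-plus-Cauchy--Schwarz conclusion. No substantive differences.
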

%\begin{rem}
% The above estimate is only effective when $K_x(r) \leq C (1+r)^{2/3}$ for $x = p,q$. However, $\bar{T}$ depends on $p$ and $q$ and is in general not uniform. 
 %In this case, the estimate of Theorem \ref{exponential decay estimate general case} is effective on 
%$ t \in (0, \bar{T}]$, where 
%\[ 
%\bar{T} =\inf_{r \geq R+S; x = p,q} \set{S^4, \frac{(r - R)}{(1+K_x (r))^\frac{3}{2}}} > 0.
%  \]
 %For $t> \bar{T}$, we can still obtain pointwise exponential decay estimate by the following method: By (\ref{eqn: weighted l2 estimate for b}), $b(p,x,\bar{T})$ is $L^2$, consider $u(x, t): = b(p,x,\bar{T}+t)$, for $0< t < \bar{T}$ we can show $\int_M u^2 e^{\eta_p}$ is nonincreasing, then pointwise estimates follow from the mean value inequality. However, there is clearly loss of sharpness in the resulting estimate. 
%\end{rem}
If we have Ricci curvature uniformly bounded from below $Ric \geq -K$, and uniformly noncollapsing unit geodesic balls, then the coefficients in the above estimates becomes uniform (no longer depends on $p,q$).
\begin{thm}\label{exponential decay estimate with Ricci lower bound} 
Suppose on a complete noncompact Riemannian manifold $M$ we have $Ric (x) \geq - K$, where $K\geq 0$ is a constant, and $Vol (B(x, 1)) > v>0$ for all $x\in M$. Then for any $S >0$, the biharmonic heat kernel satisfies for any $p,q\in M$,
\[
| b(p,q,t) | \leq \frac{\Gamma } {\sqrt{Vol(B(p,t^\frac{1}{4})) Vol(B(q,t^\frac{1}{4}))}} e^{-\frac{c(n) (d(p,q)  -2S)_+^{4/3}}{t^{1/3}} },
\] 
when
\[
0< t \leq \min \set{ \frac{S^4}{R_0^{12} (1+K)^6}, \quad \frac{S}{(1+K)^\frac{3}{2}}},
\]
where 
\[
\Gamma =C(n, \mu) \left( \max \set{R_0, t^\frac{1}{4}}^2 \max \set{1, K} + 1 \right)^{c(\mu)} e^{C(n, \mu) \sqrt{K} t^\frac{1}{4}},
\]
$R_0$ is a constant depending on $n$, $K$ and $v$.
\end{thm}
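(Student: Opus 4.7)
The plan is to derive Theorem \ref{exponential decay estimate with Ricci lower bound} directly from the more general Theorem \ref{exponential decay estimate general case} by taking $K_p(r) \equiv K_q(r) \equiv K$ (which is permitted by the uniform Ricci lower bound) and specializing the free parameter $R$ in the general estimate to $R = t^{1/4}$. The uniform volume noncollapsing hypothesis $Vol(B(x,1))\geq v > 0$ combined with Bishop--Gromov ensures that the constant $R_0(x)$ from Lemma \ref{global distance-like function} can be chosen uniformly in $x$, depending only on $n$, $K$, and $v$, so the coefficients in the general estimate become genuinely uniform constants.

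First I would compute $\gamma_p(t^{1/4},t)$ using (\ref{definition of gamma}): with $R=t^{1/4}$ the term $R^2/\sqrt{t}$ collapses to $1$, and $K_p(R) = K$, so
\[
 t\,\gamma_p(t^{1/4},t) = \frac{C(n,\mu)\bigl(\max\{R_0, t^{1/4}\}^2\max\{1,K\}+1\bigr)^{c(\mu)} e^{C(n,\mu)\sqrt{K}\,t^{1/4}}}{Vol(D_{t^{1/4}})} = \frac{\Gamma}{Vol(D_{t^{1/4}})}.
\]
Then $t\sqrt{\gamma_p\gamma_q} = \Gamma/\sqrt{Vol(D_{t^{1/4}}(p))\,Vol(D_{t^{1/4}}(q))}$. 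Using the inclusion $B(p,\Lambda^{-1}t^{1/4})\subset D_{t^{1/4}}(p) \subset B(p,t^{1/4})$ from (\ref{equivalence of balls}) together with volume comparison under $Ric \geq -K$, the volumes of $D_{t^{1/4}}(p)$ and $B(p,t^{1/4})$ differ only by a factor depending on $n$, $K$, $v$, which can be absorbed into $\Gamma$ (possibly after enlarging the constants $C(n,\mu)$, $c(\mu)$).

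Next I would translate the admissibility constraint on $t$ from Theorem \ref{exponential decay estimate general case}. With $K_x(r) \equiv K$ the infimum $\inf_{r\geq R+S}(r-R)/(1+K)^{3/2}$ equals $S/(1+K)^{3/2}$, and the middle bound $R^3 S/(R_0^3(1+K)^{3/2})$ with $R=t^{1/4}$ reads $t^{3/4}S/(R_0^3(1+K)^{3/2})$, yielding $t \leq S^4/(R_0^{12}(1+K)^6)$. The bound $t\leq S^4$ is then subsumed, leaving the two conditions in the theorem's statement.

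Finally, to get the exponential $(d(p,q) - 2S)_+^{4/3}$ rather than $(d(p,q) - 2R - 2S)_+^{4/3}$, I would apply Theorem \ref{exponential decay estimate general case} with the intermediate parameter $S' := S - t^{1/4}$ in place of $S$; the constraint $t \leq S^4/(R_0^{12}(1+K)^6) \leq S^4$ guarantees $t^{1/4}\leq S$ (after adjusting constants) so $S'>0$, and then $2R + 2S' = 2t^{1/4} + 2(S - t^{1/4}) = 2S$ as desired. The admissibility conditions for $S'$ differ from those for $S$ only by universal factors, which can be absorbed by slightly shrinking $c(n)$ and enlarging the implicit constants. The main bookkeeping obstacle is checking that all parameter substitutions remain consistent with the hypotheses of Theorem \ref{exponential decay estimate general case}, but no new analytic input is required beyond what has already been established.
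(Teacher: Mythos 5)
Your main line of attack is exactly the paper's: specialize Theorem \ref{exponential decay estimate general case} to constant curvature bound $K_p = K_q \equiv K$, take $R = t^{1/4}$ so that $R^2/\sqrt{t} = 1$ and the middle admissibility condition becomes $t \le S^4/(R_0^{12}(1+K)^6)$ (with the $S^4$ condition subsumed since $R_0 \ge 1$), use the uniform noncollapsing to make $R_0$ depend only on $n,K,v$, and use Bishop--Gromov together with $B(p,\Lambda^{-1}R)\subset D_R \subset B(p,R)$ to replace $Vol(D_{t^{1/4}})$ by $Vol(B(p,t^{1/4}))$ at the cost of a factor absorbed into $\Gamma$. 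All of this is correct and is precisely what the paper does.

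The one step that does not work as written is the final substitution $S' = S - t^{1/4}$. The admissibility requirements of Theorem \ref{exponential decay estimate general case} are hypotheses on $t$, not terms in the estimate, so a failure there cannot be ``absorbed by slightly shrinking $c(n)$ and enlarging the implicit constants''. And they can genuinely fail inside the claimed range: the theorem allows $t$ up to $\min\{S^4/(R_0^{12}(1+K)^6),\, S/(1+K)^{3/2}\}$, and when $R_0^{12}(1+K)^6$ is of order $1$ the value $t^{1/4}$ may be comparable to, or even equal to, $S$; then $S' = S - t^{1/4}$ is arbitrarily small or zero, and conditions such as $t \le S'^4$ and $t \le S'/(1+K)^{3/2}$ are violated outright, not merely up to universal factors (indeed $t = S/(1+K)^{3/2}$ is admissible in the statement but forces $t > S'/(1+K)^{3/2}$ for any positive $t^{1/4}$). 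The repair is the standard one, which is what the paper implicitly relies on (its own write-up is silent on the discrepancy between $(d-2R-2S)_+$ and $(d-2S)_+$): keep $S$ unchanged, obtaining the factor $e^{-c(n)(d(p,q)-2t^{1/4}-2S)_+^{4/3}/t^{1/3}}$, and then use $(d-2S)_+ \le (d-2t^{1/4}-2S)_+ + 2t^{1/4}$ together with $(a+b)^{4/3} \le 2^{4/3}(a^{4/3}+b^{4/3})$; since $(2t^{1/4})^{4/3} = 2^{4/3}t^{1/3}$, the shift costs only a fixed multiplicative constant $e^{C}$, which is absorbed into $\Gamma$, and a smaller $c(n)$ in the exponent. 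With that replacement your argument coincides with the paper's proof.
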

\begin{proof}
In this case, $R_0$ is a constant depending only on $n$, $K$ and $v$, WLOG we can take $R_0 \geq 1$.  Then for each
\[
0 < t < \min \set { \frac{S^4}{R_0^{12} (1+K)^6}, \quad \frac{S}{(1+K)^\frac{3}{2}}},
\]
take $R^4 = t$, we can check that the requirement for $t$ in Theorem \ref{exponential decay estimate general case} holds. Moreover, the quantity $\gamma$ now is independent of base point and simplifies to the form
\[
\gamma_p(R, t) \leq \frac{C(n, \mu) \left( \max \set{R_0, t^\frac{1}{4}}^2 \max \set{1, K} + 1 \right)^{c(\mu)} e^{C(n, \mu) \sqrt{K} t^\frac{1}{4}} }{ t Vol (B(p, t^\frac{1}{4}))},
\]
where we have used also the volume comparison theorem to handle the volume term.
\end{proof}
If the manifold $M$ has $Ric \geq 0$, then we have a Li-Yau type estimate for the biharmonic heat kernel, which is Theorem \ref{thm: estimate of biharmonic heat kernel nonnegative Ricci curvature} stated in the Introduction. 
%\begin{thm}
%   Suppose on a complete noncompact Riemannian manifold $M$ we have $Ric (x) \geq 0$, and $Vol (B(x,1)) > v>0$ for all $x\in M$. Then the biharmonic heat kernel satisfies for any $p,q\in M$and  $t> 0$,
%  \[
%  | b(p,q,t) | \leq \frac{C(n, \mu, v) } {\sqrt{Vol(B(p,t^{1/4})) Vol(B(q,t^{1/4}))}} e^{-\frac{c(n, v) d(p,q)^{4/3}}{t^{1/3}} }.
%  \] 
%\end{thm}
  \begin{proof}[Proof of Theorem \ref{thm: estimate of biharmonic heat kernel nonnegative Ricci curvature}]
    The proof is the same as in the proof of Theorem \ref{exponential decay estimate general case}, except that in this case we use the distance-like function from Lemma \ref{global distance-like function on Ricci nonnegative manifolds}. For each point $p$, we can take $R_0(p) = 0$ since the estimate of $\Delta f_p$ is in the same form on $M\backslash \set{p}$. Using the estimate 
    \[|\Delta f| \leq \frac{C(n)(1 - \ln \nu_p(f))}{f}\] 
in the calculation of the quantity $\mathcal{N}$, the requirement for $T$ in (\ref{upper bound for T 1}) and (\ref{upper bound for T 2}) becomes 
\[
  T \leq \frac{S^4}{(1- \ln \nu_p(R+S))^3}.
\] 
Hence for any $T > 0$ we can take $R = T^{1/4}$, and take $S$ such that
\[
  T = \psi_p(S) \leq  \frac{S^4}{ (1- \ln \nu_p(2S))^3} , 
\]
then the quantity $\Gamma$ in Lemma \ref{lem: mean value inequality} is controlled by
\[
  \Gamma  = C(n, \mu)(\sqrt{1 - \ln \nu_p(R)} + \frac{R^2}{\sqrt{T}})^{c(\mu)} \leq C(n, \mu) (1- \ln \nu_p(T^\frac{1}{4}))^{c(\mu)},
\] 
note that it may depend on the base point $p$, so we denote it by $\Gamma_p$ to specify the base point. 

Note also that
 $d(p, q) \leq (d(p, q ) - R - S)_+ + R + S$,
where $R = T^{1/4}$ and 
$
\frac{S^{4/3}}{T^{1/3}} = (\frac{\psi_p^{-1}(T)}{T})^\frac{1}{3}
$, we have the estimate
\[
| b(p,q,T) | \leq \frac{C(n)\Gamma_p^\frac{1}{2} \Gamma_q^\frac{1}{2} e^{c(n) ( \frac{\psi_p^{-1}(T)^4 }{ T})^{1/3} } e^{c(n) ( \frac{\psi_q^{-1}(T)^4 }{ T})^{1/3} }  } {\sqrt{Vol(B(p,T^{1/4})) Vol(B(q,T^{1/4}))}} e^{-\frac{c(n) (d(p,q) )^{4/3}}{T^{1/3}} },
\]
Finally replace $T$ by $t$ to finish the proof. 
  \end{proof}
%%%%%%%%%%%%%%%%%%%%%%%%%%%%%%%%%%%%%%%%%%%%%%%%%%%%%%%%%%%%%%%

\section{Uniqueness for the biharmonic heat equation and corollaries}
In this section we first prove a uniqueness theorem for the biharmonic heat equation. This is a fourth order generalization of the methods in \cite{KL},\cite{G1987} and \cite{HL2020}. Then as a corollary we prove the conservation law for the biharmonic heat kernel.
\subsection{Proof of uniqueness}
\begin{proof}[Proof of Theorem \ref{uniqueness theorem}]
We will use Lemma \ref{time derivative of weighted l2 integral} again, so we begin with the definition of the weight functions. Let $f$ be a distance-like function from Lemma \ref{global distance-like function} or \ref{global distance-like function on Ricci nonnegative manifolds}, define 
\begin{equation}\label{definition of xi in the proof of uniqueness}
\xi (x,t) = \xi(f(x),t)=
\begin{cases}
  - \frac{\left(f(x) - R \right)^{4/3}}{A(T - t) ^{1/3}}, & x \in M \backslash D_{2R} ,\\
  - \frac{R^{-8/3} \left( f(x) - R \right)^{4}[1 - \frac{8}{3}R^{-1}(f(x) - 2R)]}{A(T - t) ^{1/3}}, &  x \in D_{2R} \backslash D_{R},\\
  0, &  x\in D_{R}.
\end{cases}
\end{equation}
\[
G(x,t) : = C(n)
\begin{cases}
   \frac{ \frac{16}{9} \left( f(x) - R \right)^{2/3} }{A^2(T - t) ^{2/3}}, &x \in M \backslash D_{2R},\\
   \frac{  R^{-16/3} \left( f(x) - R \right)^6 [ 4 - \frac{32}{3}R^{-1}(f(x) - 2R) - \frac{8}{3}R^{-1}(f(x) - R) ]^2 }{A^2(T - t) ^{2/3}}, &  x \in D_{2R} \backslash D_{R},\\
  0, &  x\in D_{R},
\end{cases}
\]
for $C(n)$ large enough, we have $|\n \xi|^2 \leq G$. In this proof, WLOG we can assume that $R > R_0$. 

When $ f> 2R$, we have
\[
\begin{split}
\mathcal{N} = & \partial_t \xi + C G^2 + C |\n G|^2 G^{-1} + C |\Delta \xi|^2 \\
\leq &- \frac{\frac{1}{3} (f - R)^{4/3}}{A (T-t)^{4/3}} +  \frac{C(n) (f - R)^{4/3 }}{A^4 (T-t)^{4/3}} + \frac{C(n)(f-R)^{-4/3}}{A^2 (T-t)^{2/3}} \\
& + \frac{C(n) (f - R)^{\frac{2}{3}} (1+K(\Lambda f))}{A^2 (T-t)^{2/3}},
\end{split}
\]
where we have used the bound on $\Delta f$ in Lemma \ref{global distance-like function} to control $|\Delta \xi|$. 

Require that
\begin{equation}\label{requirement for T-t}
T - t \leq  \frac{R}{(1+K(4 \Lambda R))^\frac{3}{2}},
\end{equation}
then we can choose $A$ large enough depending on $n$ and the constants in the function $K(r)$, such that $\mathcal{N}<0$.

When $R \leq f \leq 2R$, note that the terms in brackets are bounded from above and below, and has gradient bounded by $C(n)R^{-1}$, hence similarly as above we can derive that
\[
\begin{split}
\mathcal{N} \leq & - \frac{c(n) R^{-8/3}  (f - R)^{4 }}{A (T-t)^{4/3}} +  \frac{C(n) R^{-32/3}  (f - R)^{12 }}{A^4 (T-t)^{4/3}} + \frac{C(n) R^{-16/3} (f -R)^4}{A^2 (T-t)^{2/3} }\\
&+ \frac{C(n) R^{-16/3}  (f - R)^6(1+K(2R))}{A^2 (T-t)^{2/3}}. \\
\end{split}
\]
Under the same assumption (\ref{requirement for T-t}), we can choose $A$ large enough depending on $n$, such that $\mathcal{N} < 0$.

Consequently by Lemma \ref{time derivative of weighted l2 integral} we have
\begin{equation}\label{final estimate of time derivative}
\frac{d}{dt} \int u^2 e^\xi \phi^2 \leq \frac{C(n,k)(1+K(4R))}{4R^2} \int_{D_{4R} \backslash D_{2R}} u^2 e^{\xi} \phi^{2-4/k},
\end{equation}
where $\phi$ is a cut-off function constructed in section \ref{construction of cut-off function}, with support on $D_{4R}$ and $\phi= 1$ on $D_{2R}$.

For $T > \tau$, we have 
\begin{equation}\label{Newton-Leibniz formula}
\frac{1}{T} \int u^2 e^\xi \phi^2 - \frac{1}{\tau} \int u^2 e^\xi \phi^2 = \int_{\tau}^{T} t^{-1} \frac{d}{dt} \int u^2 e^\xi \phi^2 - \int_{\tau}^{T} t^{-2} \int u^2 e^\xi \phi^2 .
\end{equation}
Apply Young\rq{}s inequality to the RHS of (\ref{final estimate of time derivative}), 
\[
\begin{split}
\frac{d}{dt} \int u^2 e^\xi \phi^2 \leq & t^{-1} \int_{D_{4R} \backslash D_{2R}} u^2 e^\xi \phi^2 \\
&+ \frac{2}{k}\left( \frac{k-2}{k} \right)^\frac{k}{2}\left(\frac{C(n,k)(1+K(4R))}{R^2}\right)^\frac{k}{2} t^{\frac{k}{2}-1} \int_{D_{4R} \backslash D_{2R}} u^2 e^\xi ,
\end{split}
\]
where the first term on the RHS is canceled when we plug it into (\ref{Newton-Leibniz formula}).
Hence
\[
\begin{split}
\frac{1}{T} \int u^2 e^\xi \phi^2 - \frac{1}{\tau} \int u^2 e^\xi \phi^2 \leq & C(n,k)\left(  \frac{1+K(4R)}{R^2}\right)^{\frac{k}{2}} \int_{\tau}^{T} t^{\frac{k}{2} - 2} \int_{D_{4R} \backslash D_{2R}} u^2e^{\xi} .\\
\end{split}
\]
Choose $k> 4$ large enough such that $\frac{k}{2}-2 > a$, then by the assumption on $u$, we have
\[
\begin{split}
\frac{1}{T} \int u^2 e^\xi \phi^2 - \frac{1}{\tau} \int u^2 e^\xi \phi^2 \leq  C(n,k)\left(  \frac{1+K(4R)}{R^2}\right)^{\frac{k}{2}} T^{\frac{k}{2} -2 -a} e^{L(4R) + \xi(2R,\tau)}.
\end{split}
\]
By requiring
\begin{equation}\label{requirement for T - tau}
T -\tau \leq \frac{R^4}{8 A^3 L(4R)^3},\quad and \quad T- \tau \leq \frac{ R^4}{k^3 A^3 \ln \left( 1+ K(4R)\right)^3},
\end{equation}
 we can make 
\[
\left(  \frac{1+K(4R)}{R^2}\right)^{\frac{k}{2}}  e^{L(4R) + \xi(2R,\tau)} \leq \frac{1}{R^k}. 
\] 
Therefore
\begin{equation}\label{induction formula}
 \frac{1}{T} \int_{D_{R}} u^2   - \frac{1}{\tau} \int_{D_{4R}} u^2 
\leq  C(n,k) \frac{1}{R^k} T^{\frac{k}{2} -2- a} .
\end{equation}

For $T < c_0$ and $R> (2c_0)^{-1}$, take a sequence of $R_i = 4^i R$, and a sequence $\tau_i$ with $\tau_0 = T$,  take $ \tau_{i+1} $ such that
\[
 \tau_i -\tau_{i+1} \leq \min \set{ \frac{R_i}{(1+ K(\Lambda R_{i+1}))^{3/2}}, \quad \frac{R_i^4}{8 A^3 L(R_{i+1})^3} },
\]
also note that the second inequality in (\ref{requirement for T - tau}) holds when $R$ is large. 
By (\ref{induction formula}), we have
\[
 \frac{1}{\tau_i} \int_{D_{R_i}} u^2   - \frac{1}{\tau_{i+1}} \int_{D_{R_{i+1}}} u^2 
\leq  C(n,k) \frac{1}{R_i^k}\tau_i^{\frac{k}{2} -2- a} .
\]
Summing over the index $i$, we have

\[
 \frac{1}{T} \int_{D_{R}} u^2   - \frac{1}{\tau_{N}} \int_{D_{R_{N}}} u^2 
\leq  C(n,k) \sum_{i=0}^{N-1} \frac{1}{R_i^k}\tau_i^{\frac{k}{2} -2- a} .
\]
By the assumptions on $L(r)$ we have
\[
\sum_{i = 0}^ \infty \frac{R_i}{(1+ K(\Lambda R_{i+1}))^{3/2}  } = \infty,
\]
\[
\sum_{i = 0}^ \infty \frac{R_i^4}{L(R_{i+1})^3} = \infty,
\]
 thus we can let $\tau_N \to 0$ for some finite number $N$, which may depend on $R$. 
Then by Lemma \ref{strong vanishing as t goes to 0} below, we get
\[
 \frac{1}{T} \int_{D_{R}} u^2  
\leq  C(n,k) T^{\frac{k}{2} -2- a} \sum_{i=0}^{N-1} \left(  \frac{1+K( R_{i+1})}{(R_{i+1} - R_i)^2}\right)^{\frac{k}{2}} \leq C(n,k)\frac{T^{\frac{k}{2} -2- a}}{R^k} .
\]
Take $R \to \infty$ finishes the proof since $D_R$ exhausts the entire manifold. 
\end{proof}

\begin{lem}\label{strong vanishing as t goes to 0}
Suppose $\|u^2(x,t)\|_{L^2(\Omega)} \to 0$ as $t \to 0$ for any compact set $\Omega \subset M $. Then for any $R>0$ we have 
\[
\lim_{t \to 0} \frac{1}{t} \int_{D_R} u^2 = 0. 
\]
\end{lem}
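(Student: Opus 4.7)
The plan is to revisit the basic $L^2$ energy identity underlying Lemma \ref{lem: l2 estimates for poly-Laplacian} and to exploit the vanishing of the initial data more directly than is done there. Specifically, I would select a cutoff function $\phi$ constructed as in Section 2.2, with $\phi\equiv 1$ on $D_R$, supported in $D_{2R}$, and satisfying the gradient/Laplacian bounds (\ref{gradient of the cut-off function})--(\ref{Laplacian of the cut-off function}) for some $k\geq 4$. Repeating the calculation at the beginning of the proof of Lemma \ref{lem: l2 estimates for poly-Laplacian} with $m=0$, but without introducing the weight $t^{i+1}$, yields the pointwise-in-time differential inequality
\[
\frac{d}{dt}\int u^2 \phi^2 \leq -\int |\Delta u|^2 \phi^2 + C_1 \int u^2 \phi^{2-4/k},
\]
where $C_1 = C_1(n,k,K,R)$ is a constant that does not degenerate as $t\to 0^+$.

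Next, for $0<s<t$ I would integrate this inequality over $[s,t]$ and drop the nonpositive $|\Delta u|^2$ term to obtain
\[
\int u^2(x,t)\phi^2 \leq \int u^2(x,s)\phi^2 + C_1 \int_s^t\!\int u^2 \phi^{2-4/k}\,d\sigma.
\]
Since $\phi$ has compact support inside $D_{2R}$, the hypothesis of the lemma (in particular $\int_{D_{2R}} u^2(x,s)\,dx\to 0$ as $s\to 0^+$, which follows from $\|u^2(\cdot,s)\|_{L^2(D_{2R})}\to 0$ by H\"older) lets me send $s\to 0^+$ to conclude
\[
\int_{D_R} u^2(x,t)\,dx \leq \int u^2 \phi^2 \leq C_1 \int_0^t\!\int_{D_{2R}} u^2(x,\sigma)\,dx\,d\sigma.
\]

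Finally, set $g(\sigma) := \int_{D_{2R}} u^2(x,\sigma)\,dx$. By hypothesis $g(\sigma)\to 0$ as $\sigma\to 0^+$, so for every $\varepsilon>0$ there exists $\delta>0$ with $g(\sigma)<\varepsilon$ on $(0,\delta)$; hence for $t<\delta$,
\[
\frac{1}{t}\int_{D_R} u^2(x,t)\,dx \;\leq\; \frac{C_1}{t}\int_0^t g(\sigma)\,d\sigma \;\leq\; C_1\varepsilon,
\]
which gives the desired conclusion since $\varepsilon$ is arbitrary.

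There is no real obstacle here; the only observation needed is that the factor $1/t$ appearing on the right-hand side of Lemma \ref{lem: l2 estimates for poly-Laplacian} is an artifact of the weighted quantity $F_m(t)=\sum a_i t^{i+1}\int|\Delta^i u|^2\phi_i^2$, which was introduced there to accommodate arbitrary (possibly nonzero) initial data. When $u(\cdot,0)=0$ in $L^2_{loc}$, one can instead integrate the differential inequality directly from $s\to 0^+$, trading the $1/t$ factor for the absolute vanishing of the $s=0$ boundary term, and the Ces\`aro-type averaging $\frac{1}{t}\int_0^t g(\sigma)\,d\sigma\to 0$ then yields $\int_{D_R} u^2(x,t)\,dx = o(t)$, as required.
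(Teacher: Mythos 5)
Your proposal is correct and follows essentially the same route as the paper: a cutoff energy inequality $\frac{d}{dt}\int u^2\phi^2 \leq C\int u^2\phi^{\,\text{power}}$ obtained by integration by parts and Cauchy--Schwarz (as in the $m=0$ case of Lemma \ref{lem: l2 estimates for poly-Laplacian}), then integration in time from the initial time using the $L^2_{loc}$ vanishing hypothesis, giving $\int_{D_R}u^2(t)\leq Ct\cdot o(1)$. The only cosmetic difference is that you bound the time integral by a Ces\`aro average while the paper uses $\sup_{s\in(0,t)}\int_{D_{2R}}u^2(s)$; these are interchangeable.
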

\begin{proof}
Let $m > 4$. For any $R > 0$, since $D_{2R} $ is compact, we can construct a cut-off function $\phi$ supported on $D_{2R}$ with $\phi = 1$ on $D_R$, and $|\n \phi| \leq C\phi^{1-1/m}$, $|\Delta \phi| \leq C \phi^{1-2/m}$, where the constants may depend on $m$ and the geometry of $D_{2R}$. By integration by parts and the Cauchy-Schwarz inequality, we have
\[
\begin{split}
\frac{d}{dt}\int u^2 \phi^2 = & \int - 2 u \Delta^2 u \phi^2 \\
= & \int - 2 |\Delta u|^2 \phi^2 - 8\phi  \langle \n u, \n \phi \rangle \Delta u - 4u \phi \Delta u \Delta \phi - 4 u \Delta u |\n \phi|^2    \\
\leq & - \int |\Delta u|^2 \phi^2 + C \int |\n u|^2 |\n \phi^2 | + u^2 (|\Delta \phi |^2 + |\n \phi|^4 \phi^{-2}), \\
\end{split}
\]
where
\[
\begin{split}
\int |\n u|^2 |\n \phi^2 | \leq & C \int |\n u|^2 \phi^{2-2/m} \\
= & - C \int u \Delta u \phi^{2-2/m} + (2-2/m) \phi^{1-2/m}u \langle \n u, \n \phi \rangle \\
\leq & \frac{1}{2}\int |\Delta u|^2 \phi^2 + \frac{1}{2} \int |\n u|^2 |\n \phi|^2 + C \int u^2 \phi^{1-4/m} +u^2 \phi^{2-4/m} .
\end{split}
\]
Hence 
\[
\frac{d}{dt} \int u^2 \phi^2 \leq C \int u^2 \phi^{1-4/m}. 
\]
Integrate on $[0, t]$, and use the Holder inequality to get
\[
\int u^2 \phi^2 (t) \leq C \int_0^t \int u^2 \phi^{1-4/m} \leq C t \sup_{s \in(0,t)} \int_{D_{2R}} u^2(s).
\]
The conclusion then follows from the assumption $\|u(x,t)\|_{L^2(D_{2R})} \to 0 $ as $t \to 0$.
\end{proof}

\subsection{Proof of the conservation law}
The conservation law for the biharmonic heat kernel, i.e. $\int_M b(x, y, t) dy \equiv 1$, follows directly from the uniqueness theorem and the exponential decay estimate Theorem \ref{exponential decay estimate general case}.
\begin{proof}[Proof of Corollary \ref{conservation law}]
In this proof we use $C$ to denote a constant that may change from line to line. 
By the curvature assumption $Ric \geq -K$ and the volume comparison theorem, the geodesic balls on $M$ has at most exponential volume growth. 
  By Theorem \ref{exponential decay estimate with Ricci lower bound}, $b(x, y, t)$ is integrable on $M$ for any $t> 0$, and $u(x, t):=\int_M b(x, y, t) dy$ is a solution of the biharmonic heat equation for $0< t< \bar{T}$, and $\lim_{t \to 0}u(x, t) = 1$, where
\[
\bar{T} =\min \set{ \frac{S^4}{R_0^{12} (1+K)^6}, \quad \frac{S}{(1+K)^\frac{3}{2}}}> 0.
\]  
By the volume comparison theorem, for any $x$ and for $ 0 < t < 1$,
\[
Vol (B(x, t^\frac{1}{4})) \geq \frac{Vol(B(x, 1))}{V_{-K}(1)} V_{-K}(t^\frac{1}{4}) \geq c(n, K, v) t^{n/4},
\]
where $V_{-K}(r)$ denotes the volume of a ball with radius $r$ on the space-form with constant sectional curvature $-K$.
Thus $u(x, t)$ satisfies 
  \[
  |u(x, t)| \leq \frac{C}{t^{\frac{n}{4}}},
  \]
when $t$ is sufficiently small.
Hence $u(x, t)$ satisfies the assumption (\ref{uniqueness class}), then by Theorem \ref{uniqueness theorem}, it has to be the constant solution $1$. Since $S$ can be any positive number, the corollary is prove for any $t > 0$.
\end{proof}

\section{$L^\infty$ estimates for the biharmonic heat equation}
In this section we prove a uniform $L^\infty$ estimate for the biharmonic heat equation on Ricci nonnegative manifolds with maximal volume growth, which can be viewed as a maximum principle for entire solutions. We also prove this type of estimate on closed manifolds.
\subsection{$L^\infty$ estimate on Ricci nonnegative manifolds with maximal volume growth}
Now we prove Corollary \ref{cor: uniform l-infinity estimate} in the Introduction.
\begin{proof}[Proof of Corollary \ref{cor: uniform l-infinity estimate}]
  By Theorem \ref{uniqueness theorem}, $u(x, t) = \int_M b(x, y, t) u(y, 0) dy$. By Theorem \ref{thm: estimate of biharmonic heat kernel nonnegative Ricci curvature} and the volume comparison theorem
  \[
    |b(x, y, t) | \leq \frac{C}{Vol B(x, t^{1/4})} \left( 1 + \frac{d(x, y)}{t^{1/4}}\right)^\frac{n}{2} e^{-\frac{c d(x, y)^{4/3}}{t^{1/3}}},
  \]
  where $C$ is a constant depending on $n$ and $v$. 
  By direct calculation 
  \[
  \int_M |b(x, y, t)| dy \leq   \int_0^\infty \frac{C A(\partial B(x,r))}{Vol B(x, t^{1/4})} \left( 1 + \frac{r}{t^{1/4}}\right)^\frac{n}{2} e^{-\frac{c r^{4/3}}{t^{1/3}}} dr. 
  \]
  Let $r = t^{1/4}s$, 
  \[
    \begin{split}
    \int_M |b(x, y, t)| dy \leq & C \int_0^\infty \frac{\frac{d}{ds} Vol B(x, t^{1/4}s)}{Vol B(x, t^{1/4})} (1+s)^\frac{n}{2} e^{- c s^{4/3} } ds \\
    = & -C \int _0^\infty \frac{Vol B(x, t^{1/4} s)}{Vol B(x, t^{1/4})} \frac{d}{ds} \left( (1+s)^\frac{n}{2} e^{- c s^{4/3} } \right)ds. \\
    \end{split}
  \]
  By the volume comparison theorem, 
  \[
    \frac{Vol B(x, t^{1/4} s)}{Vol B(x, t^{1/4})} \leq C(n ) s^n
  \]
  when $s \geq 1$. Hence $\int_M |b(x, y, t)| dt \leq C(n, \mu, v)$, which yields the claim of the theorem. 
\end{proof}

\subsection{$L^\infty$ estimate on closed manifolds}
Although we have focused on complete noncompact manifolds, the results applies also to closed manifolds, i.e. compact manifolds without boundary. In this subsection we discuss the $L^\infty$ estimate for the biharmonic heat equation on closed manifolds since the proof is slightly different from the noncompact case.  
\begin{lem}\label{estimate of biharmonic heat kernel on closed manifolds}
Let $M$ be a closed manifold with dimension $n$, suppose it has diameter $L$, $Ric \geq -K$ and $Vol(B(x,1)) \geq v > 0$ for any $x \in M$. Then 
\begin{equation}
|b(p,q,t)| \leq \frac{C(n, K, v) } {\sqrt{Vol(B(p,t^\frac{1}{4})) Vol(B(q,t^\frac{1}{4}))}} e^{-\frac{c(n, K, v) d(p,q)^{4/3}}{t^{1/3}} },
\end{equation}
for any $p, q \in M$ and $0 < t \leq \bar{T}(n, K, v)$.
\end{lem}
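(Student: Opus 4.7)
The plan is to mirror the proof of Theorem 5.2, using two simplifications afforded by compactness of $M$: automatic $L^2$ integrability, and the fact that the spatial cut-off in Lemma 4.1 can be dropped because integration by parts on a closed manifold produces no boundary contribution. The estimate is local in nature, so even though $L$ appears in the hypothesis to assert closedness, it will not enter the final constants.

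First I fix $p, q \in M$ and construct Schoen--Yau distance-like functions $f_p$ and $f_q$ via Lemma 2.1 on geodesic balls $B(p, R_1), B(q, R_1) \subsetneq M$, where $R_1 = R_1(n, K, v)$ is small enough that the gluing construction of Section 2 produces functions with $|\nabla f| \leq \Lambda$ and $|\Delta f| \leq \Lambda \sqrt{1+K}$ on their domains. The time-threshold $\bar T(n, K, v)$ will be chosen small enough that $t^{1/4}$ fits comfortably inside these balls for every $t \leq \bar T$.

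Next I define the weight function $\xi$ and auxiliary $G$ based at $p$ exactly as in the ``$\xi$ with exponential decay'' formula of Section 5, with $R = t^{1/4}$ and $S$ chosen to satisfy the Theorem 5.2 constraints. The same algebra gives $\mathcal{N}(\xi,G) \le 0$ for $A = A(n,K,v)$ large. Lemma 4.1, applied on closed $M$ with $\phi \equiv 1$ — so the cut-off error term on the right vanishes — then yields $\partial_t \int_M u^2 e^\xi \le 0$ for every solution $u$ of the biharmonic heat equation. Testing against
\[
u(x,s) = \int_M b(x, y, s)\, b(p, y, t)\, e^{-\xi(y, 0)} \, dy,
\]
which is smooth on the closed manifold, and combining with the mean value inequality of Lemma 3.8 and the symmetry of $b$, produces the weighted-$L^2$ estimate
\[
E_p(t) := \int_M b(p, y, t)^2 e^{\eta_p(y, t)} \, dy \le t\, \gamma_p(R, t),
\]
with $\eta_p$ the Gaussian-type weight and $\gamma_p$ the coefficient from the mean value bound. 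Under the uniform hypotheses $\mathrm{Ric} \ge -K$ and $\mathrm{Vol}(B(x,1)) \ge v$, $\gamma_p(R,t)$ collapses to $C(n,K,v) / \bigl(t\, \mathrm{Vol}(B(p, t^{1/4}))\bigr)$ for $t \le \bar T(n,K,v)$, exactly as in the derivation of Theorem 5.2.

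The pointwise estimate then follows from the semigroup identity $b(p,q,t) = \int_M b(p,x,t/2)\, b(x,q,t/2)\, dx$ together with Cauchy--Schwarz and the triangle inequality $d(p,q) \le f_p(y) + f_q(y)$, precisely as in Theorem 5.2, giving
\[
|b(p,q,t)| \le \sqrt{E_p(t/2) E_q(t/2)}\, e^{-c(n,K,v)\, d(p,q)^{4/3}/t^{1/3}}.
\]
The main obstacle is a bookkeeping one: verifying that the Schoen--Yau distance-like function is usable on a closed manifold, where the ball $B(p, R_1 + 1)$ from Lemma 2.1 must be a proper subset. This is settled by shrinking $\bar T(n,K,v)$ so that $R_1$ is less than a fixed fraction of any length scale intrinsic to $M$ (the diameter suffices); once this is done, the remaining steps are verbatim adaptations of the Section 5 argument, so the constants $C, c$ and threshold $\bar T$ depend only on $n, K, v$.
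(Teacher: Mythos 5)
Your overall architecture (weighted $L^2$ estimate for $b(p,\cdot,t)$ via Lemma \ref{time derivative of weighted l2 integral} and Lemma \ref{lem: mean value inequality}, then the semigroup identity plus Cauchy--Schwarz) is the right one, and your observation that on a closed manifold one may take $\phi\equiv 1$ so the cut-off error term in Lemma \ref{time derivative of weighted l2 integral} disappears is a correct and genuine simplification. But there is a real gap in how you handle the distance-like function, and it is exactly the point the lemma's proof has to address.

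The exponential factor $e^{-c\,d(p,q)^{4/3}/t^{1/3}}$ is extracted from the inequality $d(p,q)\le f_p(y)+f_q(y)$, which must hold for \emph{every} $y\in M$, because the weight $\eta_p(y)\sim (f_p(y)-R)^{4/3}/(At^{1/3})$ has to grow all the way out to the scale of $d(p,q)$. You build $f_p$ only on a ball $B(p,R_1)$ with $R_1=R_1(n,K,v)$; outside that ball $f_p$ is necessarily capped (whatever extension you use must be bounded by roughly $\Lambda R_1$ to keep $\nabla f_p$ and $\Delta f_p$ under control), so the best you can get is $f_p(y)+f_q(y)\ge \min\{d(p,q),\,cR_1\}$ and hence a decay factor $e^{-c\min\{d(p,q),R_1\}^{4/3}/t^{1/3}}$. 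Since the diameter $L$ is \emph{not} bounded in terms of $n,K,v$, for manifolds with $L\gg R_1$ and points with $d(p,q)\gg R_1$ this is strictly weaker than the claimed estimate. The statement is therefore not ``local in nature'': the saturation radius of $f_p$ must be comparable to $L$. The paper's proof defines $f_p$ on $B_p(\frac{L}{2}-1)$ (the largest ball that is still a proper subset, after normalizing $L\gg 1$), extends it by a constant outside $D_{\frac{L}{2}-1}$, obtains the exponent $\min\{d(p,q),\frac{L}{2}-1\}^{4/3}$, and only then removes the $\min$ by using $d(p,q)\le L$, so that $\min\{d(p,q),\frac{L}{2}-1\}\ge \frac13 d(p,q)$; this is how $L$ enters the construction but drops out of the final constants. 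Your proposed fix --- ``shrinking $\bar T$ so that $R_1$ is less than a fixed fraction of the diameter'' --- does not repair this: shrinking $\bar T$ controls $R=t^{1/4}$ and $S$, not the outer radius of the domain of $f_p$, and the constraint on that radius runs in the opposite direction (it must be \emph{large}, at least $R_0(n,K,v)$ for Lemma \ref{distance-like function lemma 2} to apply and comparable to $L$ for the decay), which is precisely why the proof first normalizes $L\gg 1$.
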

\begin{proof}
Since the manifold is compact, we need to modify the construction of the distance-like function $f_p(x)$ for each $p\in M$. Assume WLOG that $L >> 1$, $f_p(x)$ can be defined on $B_p(\frac{L}{2}-1)$, then $D_{\frac{L}{2}-1} \subset B_p(\frac{L}{2}-1)$, define $f_p$ to be constant on $M \backslash D_{\frac{L}{2}-1}$. Then the same argument as in the noncompact case will establish the decay estimate for the biharmonic heat kernel. Hence by the proof of Theorem \ref{exponential decay estimate with Ricci lower bound}, there is a constant $\bar{T}(n, K, v)$, such that when $ 0 < t   \leq \bar{T}$, we can take $S = C(n, K, v) t^\frac{1}{4}$ for some constant $C(n, K, v)$, then we have
\begin{equation}\label{estimate of the biharmonic heat kernel on closed manifolds}
|b(p,q,t)| \leq \frac{C(n, K, v) } {\sqrt{Vol(B(p,t^\frac{1}{4})) Vol(B(q,t^\frac{1}{4}))}} e^{-\frac{c(n, K, v) \min \{ d(p,q), L/2 -1 \}^{4/3}}{t^{1/3}} },
\end{equation}
for any $p, q \in M$.  Since we have assumed $L >>1$, clearly
$ \min \{ d(p,q), L/2 -1 \} > \frac{1}{3} d(p,q)$ when $d(p, q) > \frac{L}{2} - 1$, hence we have the claimed estimate of the lemma. 
\end{proof}
\begin{cor}\label{cor: l-infinite estimate on closed manifolds}
Let $M$ be a closed manifold with dimension $n$, suppose it has diameter $L$ and Ricci lower bound $Ric \geq -K$, and $Vol(B(x, 1)) \geq v > 0$ for any $x\in M$. Then there is a constnat $C(n, K, v)$, such that for any solution $u(x, t)$ of the biharmonic heat equation, we have
\[
|u(x, t)|_{L^\infty(M)} \leq C(n,K,v) e^{C(n)\sqrt{K}L} |u(x, 0)|_{L^\infty(M)},
\]
for any $t>0$.
\end{cor}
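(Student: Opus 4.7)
The plan is to represent the solution via the biharmonic heat kernel and to bound the quantity $\sup_{x \in M}\int_M |b(x,y,t)|\,dy$ uniformly in $t>0$. Since $M$ is compact, the growth hypotheses of Theorem \ref{uniqueness theorem} are automatically met, so $u(x,t) = \int_M b(x,y,t)\,u(y,0)\,dy$. The corollary then reduces to proving
\[
\sup_{x\in M, \ t>0} \int_M |b(x,y,t)|\,dy \ \leq\ C(n,K,v)\,e^{C(n)\sqrt{K}L}.
\]

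First I would handle the small-time range $0 < t \le \bar T$, where $\bar T = \bar T(n,K,v)$ is the threshold provided by Lemma \ref{estimate of biharmonic heat kernel on closed manifolds}. Plugging in the pointwise estimate and using the volume comparison theorem together with the assumption $V_y(1)\ge v$ to get $V_y(t^{1/4}) \ge c(n,K,v)\,t^{n/4}$, the substitution $s = d(x,y)/t^{1/4}$ together with the upper bound $\mathrm{Vol}\bigl(B(x, t^{1/4}s)\bigr)\le V_{-K}(t^{1/4}s)$ reduces the integral to $\int_0^{L/t^{1/4}} s^{n+1/3} e^{-c s^{4/3}} e^{C\sqrt K\,t^{1/4}s}\,ds$, which, for $t \le \bar T$, is bounded by a constant depending only on $n,K,v$.

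For the large-time range $t \ge \bar T$ I would exploit the closed-manifold structure via the spectral/semigroup identity. By symmetry and the semigroup property, $\int_M b(x,y,t)^2\,dy = b(x,x,2t)$. Combined with the conservation law $\int_M b(x,y,t)\,dy = 1$ (Corollary \ref{conservation law}, whose hypotheses are satisfied here), this gives
\[
\int_M \Bigl(b(x,y,t)-\tfrac{1}{\mathrm{Vol}(M)}\Bigr)^{\!2} dy \;=\; b(x,x,2t)-\tfrac{1}{\mathrm{Vol}(M)}.
\]
Spectrally, the right-hand side equals $\sum_{i\ge 1} e^{-2t\lambda_i^2}\phi_i(x)^2$ and is therefore non-increasing in $t$. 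So for $t \ge \bar T$ it is dominated by $b(x,x,2\bar T) \le C(n,K,v)/V_x(\bar T^{1/4}) \le C'(n,K,v)$, using the diagonal case of Lemma \ref{estimate of biharmonic heat kernel on closed manifolds} and the uniform lower volume bound. Cauchy--Schwarz combined with the Bishop--Gromov upper bound $\mathrm{Vol}(M) \le V_{-K}(L) \le C(n)\,e^{C(n)\sqrt{K}L}$ then yields
\[
\int_M \Bigl|b(x,y,t)-\tfrac{1}{\mathrm{Vol}(M)}\Bigr|\,dy \;\le\; \sqrt{\mathrm{Vol}(M)\,C'(n,K,v)} \;\le\; C(n,K,v)\,e^{C(n)\sqrt{K}L/2},
\]
and adding the constant $1/\mathrm{Vol}(M)\cdot\mathrm{Vol}(M)=1$ gives the desired uniform bound. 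Splicing the small-time and large-time estimates produces the claim.

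The main obstacle is that the noncompact-style approach of iterating the semigroup $u(\cdot,t) = (\text{contraction})\cdot u(\cdot, t-\bar T)$ would only yield an $L^\infty$ bound growing exponentially in $t$, since the small-time kernel constant need not be $\le 1$. The essentially new ingredient is therefore the use of the spectral identity $\int b(x,y,t)^2\,dy = b(x,x,2t)$ and the monotone decay toward the constant $1/\mathrm{Vol}(M)$, which is available only because $M$ is closed. Once this is combined with the already-established small-time kernel bound and the Bishop--Gromov volume bound, the estimate is uniform in $t$, with the exponential factor $e^{C(n)\sqrt{K}L}$ coming entirely from the total volume of $M$.
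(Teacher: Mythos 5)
Your small-time argument (kernel representation via the uniqueness theorem, the pointwise bound of Lemma \ref{estimate of biharmonic heat kernel on closed manifolds}, noncollapsing plus volume comparison, and integration in the radial variable) is essentially the paper's, and it is fine. The genuine gap is in your large-time step. The spectral identity $\int_M b(x,y,t)^2\,dy=b(x,x,2t)$, the subtraction of $1/\mathrm{Vol}(M)$, and the monotonicity of $\sum_{i\ge1}e^{-2t\lambda_i^2}\phi_i(x)^2$ are all correct (and the evaluation at time $2\bar T$, outside the proven range of the kernel bound, is a trivially fixable offset), but the Cauchy--Schwarz step over all of $M$ necessarily costs a factor $\sqrt{\mathrm{Vol}(M)}$, and the inequality you use to absorb it, $\mathrm{Vol}(M)\le V_{-K}(L)\le C(n)e^{C(n)\sqrt K L}$, is false as stated: $V_{-K}(L)$ grows like $L^n$ when $\sqrt K L$ is small, so the bound requires a constant $C(n,K)\sim K^{-n/2}$ and fails altogether at $K=0$. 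Consequently your argument proves the corollary only for $K>0$ (with a constant degenerating as $K\to0$), and for $K=0$ it yields $|u(\cdot,t)|_{L^\infty}\le C(n,v)(1+L^{n/2})|u(\cdot,0)|_{L^\infty}$, not the claimed diameter-free bound $C(n,0,v)$. The loss is not an artifact of bookkeeping: take a flat product torus with unit balls of volume $\ge v$ and diameter $L\to\infty$; at intermediate times $t$ of order $\bar T\le t\ll L$ the kernel $b(x,\cdot,t)$ is still essentially localized, $\|b(x,\cdot,t)-1/\mathrm{Vol}(M)\|_{L^1}=O(1)$, while your global Cauchy--Schwarz bound is of size $\sqrt{\mathrm{Vol}(M)}\sim L^{1/2}$; and you cannot localize the Cauchy--Schwarz because the exponential decay estimate for $b$ is only available for $t\le\bar T$.

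For comparison, the paper's large-time argument avoids the kernel (and hence any total-volume factor) entirely: integrating by parts as in Lemma \ref{lem: l2 estimates for poly-Laplacian} without cut-offs shows $\int_M u^2(\cdot,t)$ is nonincreasing and $\int_M|\Delta^m u(\cdot,t)|^2\le C(n,m)t^{-m}\int_M u(\cdot,0)^2$, and then the multiplicative Sobolev inequality of Lemma \ref{lem: l2 multiplicative sobolev inequality} with $r=L$, stated in volume-averaged norms, gives $|u(\cdot,t)|_{L^\infty}\le C(n,\mu)e^{C(n,\mu)\sqrt K L}(1+1/t)^{C(n,\mu)}\mnorm{u(\cdot,0)}_{L^2(M)}\le C e^{C\sqrt K L}|u(\cdot,0)|_{L^\infty}$ for $t\ge\bar T$; the averaged norms are exactly what makes the volume factors cancel. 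If you want to keep your spectral route, you would need an additional ingredient (for instance a kernel decay or localization valid at all times, or an $L^1\to L^1$ bound on the semigroup) to handle the intermediate-time regime without paying $\sqrt{\mathrm{Vol}(M)}$; as written, the step ``$V_{-K}(L)\le C(n)e^{C(n)\sqrt K L}$'' cannot be repaired within the claimed dependence of the constants.
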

\begin{proof}
WLOG we assume that $L >>1$, otherwise we can scale up the metric and clearly $Ric \geq -K $ still holds. 
By the proof of Lemma \ref{lem: l2 estimates for poly-Laplacian} without using cut-off functions, we can show that $\int_M u^2(x, t)$ is nonincreasing in $t$, and  
\[
\int_M |\Delta^m u (x, t)|^2 \leq \frac{C(n,m)}{t^m} \int_M u(x, 0)^2.
\]
Then by the multiplicative Sobolev inequality Lemma \ref{lem: l2 multiplicative sobolev inequality} (where we can take $r$ to be the diameter $L$) , we have 
\[
|u(x, t)|_{L^\infty(M)} \leq  C(n, \mu) e^{C(n, \mu) \sqrt{K}L } (1+\frac{1}{t})^{C(n, \mu)}\mnorm{ u(x, 0) } _{L^2(M)}, \quad t > 0.
\]
This proves the corollary when $t$ is large.

 For $t$ small we use the biharmonic heat kernel estimate in Lemma \ref{estimate of the biharmonic heat kernel on closed manifolds}. 
 By the volume comparison theorem
\[
Vol(B(p, t^\frac{1}{4}) )  \leq e^{C(n ) \sqrt{K} L} \left(1 + \frac{d(p,q)}{t^\frac{1}{4}} \right)^n Vol (B(q, t^\frac{1}{4})).
\] 
Thus
\[
|b(p,q,t)| \leq \frac{C(n, K, v) e^{C(n ) \sqrt{K} L} } { Vol(B(p,t^\frac{1}{4})) } \left(1 + \frac{d(p,q)}{t^\frac{1}{4}} \right)^\frac{n}{2}  e^{-\frac{c(n, K, v) d(p,q)^{4/3}}{t^{1/3}} }.
\]

Suppose $u(x, t)$ is a solution of the biharmonic heat equation on $M$. By the uniqueness Theorem \ref{uniqueness theorem},
\[
u(x, t) = \int_M b(x, y, t) u(y,0) dy.
\]
By the same argument as in the proof of Corollary \ref{cor: uniform l-infinity estimate}, we can show that
\begin{equation}\label{integrability of biharmonic heat kernel on closed manifolds}
\int_M |b(x, y, t)| dy \leq C(n, K, v) e^{c(n)\sqrt{K}L} , \quad \forall (x, t) \in M \times (0, \bar{T}).
\end{equation}
Hence
\[
|u(x, t) | \leq C(n, K, v) e^{c(n)\sqrt{K}L} |u(x, 0)|_{L^\infty(M)}, \quad \forall (x, t) \in M \times (0, \bar{T}).
\]
\end{proof}

\section*{Appendix}
The following is an example of a complete Riemannian manifold, which admits a solution of the biharmonic heat equation, starting with bounded initial data, but its $L^\infty(M)$ norm goes to $\infty$ as $t \to \infty$. The construction was inspired by examples of similar form in \cite{SNWC1977}.

\textbf{Example.}
Let $M = \mathbb{R} \times N^{n-1}$ with a complete metric   
\[ds^2 = dr^2 + \phi^2(r) ds_{N}^2,\]
where we take 
\[\phi(r) = e^{r^{2+\epsilon}}, \quad \epsilon > 0.\]
The Laplacian of $M$ applied to a function $f(r)$ can be written as 
\[
\Delta f(r) = \phi^{1-n} (r) \left(\phi^{n-1}(r) f'(r)\right)'.  
\]
Let
\[
  F(r) = \int_0^r \phi^{1-n} (s) \int_0^s \phi^{n-1} (\tau) \int_0^\tau \phi^{1-n}(\gamma) \int_0^\gamma \phi^{n-1}(\eta) d\eta d\gamma d\tau ds,
\]
then $F$ is a bounded function,
 such that $\Delta^2 F = 1$. Define $v(r,t) = F(r) - t$, then $v$ is a solution of the biharmonic heat equation, which is bounded at $t=0$, but its $L^\infty$ norm goes to $\infty$ as $t \to \infty$.

%Actually, if there is a bounded function $F$ such that $\Delta^3 F = 0$ and $\Delta F \neq 0$, then $F + t \Delta F$
%solves the biharmonic heat equation, with $L^\infty$ norm goes to infinity. 

\bibliographystyle{plain}

%\bibliography{ref}

\begin{thebibliography}{XXXXX9}

\bibitem[CC96]{CC1996} Cheeger, J., Colding, T. , {\sl Lower Bounds on Ricci Curvature and the Almost Rigidity of Warped Products,} The Annals of Mathematics, 2nd Ser., Vol. 144, No. 1. (Jul., 1996), pp. 189-237.

\bibitem[CM97]{CM1997} Colding, Tobias H.; Minicozzi, William P., II, {\sl Harmonic functions with polynomial growth.} J. Differential Geom. 46 (1997), no. 1, 1–77. 

\bibitem[GG08]{GG2008} Gazzola, Filippo; Grunau, Hans-Christoph, {\sl  Eventual local positivity for a biharmonic heat equation in $\mathbb{R}^n$}. Discrete Contin. Dyn. Syst. Ser. S 1 (2008), no. 1, 83–87. 

\bibitem[GG09]{GG2009} Gazzola, Filippo; Grunau, Hans-Christoph, {s\sl Some new properties of biharmonic heat kernels.} Nonlinear Anal. 70 (2009), no. 8, 2965–2973.

\bibitem[G99]{G1999} Grigor'yan, A., {\sl Analytic and geometric background of recurrence and non-explosion of the Brownian motion on Riemannian manifolds},  Bull. Amer. Math. Soc. (N.S.) 36 (1999), no. 2, 135–249. 

%\bibitem[GT01]{GT2001} Gilbarg, D. and Trudinger, N. S., {\sl Elliptic partial differential equations of second order,} Reprint of the 1998 edition. Classics in Mathematics. Springer-Verlag, Berlin, 2001. xiv+517 pp. ISBN: 3-540-41160-7

\bibitem[G87]{G1987} Grigor'yan, A.,  {\sl On stochastically complete manifolds,} Soviet Math. Dokl. 34 (1987), 310-313.

\bibitem[G09]{G2009} Grigor'yan, A., {\sl Heat kernel and analysis on manifolds, }
AMS/IP Studies in Advanced Mathematics, 47. American Mathematical Society, Providence, RI; International Press, Boston, MA, 2009. xviii+482 pp. ISBN: 978-0-8218-4935-4

\bibitem[HL20]{HL2020}He, F. and Lee, M.-C., {\sl On the uniqueness for the heat equation on complete Riemannian manifolds,} Ann. Global Anal. Geom. 58 (2020), no. 4, 497–504.

\bibitem[KL]{KL}Karp, L. and Li, P., {\sl The heat equation on complete Riemannian manifolds}, unpublished.

\bibitem[KL12]{KL2012}  Koch, H. and Lamm, T., {\sl Geometric flows with rough initial data }, Asian J. Math. 16 (2012), no. 2, 209–235.

\bibitem[KS02]{KS2002} E. Kuwert, R. Schatzle, {\sl Gradient flow for the Willmore functional.} Comm. Anal. and Geom. {\bf 10}(2) (2002), 307-339.

\bibitem[LSU68]{LSU1968} Ladyženskaja, O. A.; Solonnikov, V. A.; Ural'ceva, N. N.
{\sl Linear and quasilinear equations of parabolic type. (Russian)},
Translated from the Russian by S. Smith. Translations of Mathematical Monographs, Vol. 23 American Mathematical Society, Providence, R.I. 1968.

\bibitem[LY86]{LY1986} Li, Peter; Yau, Shing-Tung, {\sl On the parabolic kernel of the Schrödinger operator}. Acta Math. 156 (1986), no. 3-4, 153–201.

\bibitem[Sal92]{Sal1992} L. Sallof-Coste, {\sl Uniformly elliptic operators on Riemannian manifolds.} J. Differential Geom. {\bf 36} (1992), 417-450.

\bibitem[SNWC]{SNWC1977} Sario, Leo; Nakai, Mitsuru; Wang, Cecilia; Chung, Lung Ock, { \sl Classification theory of Riemannian manifolds. Harmonic, quasiharmonic and biharmonic functions}. Lecture Notes in Mathematics, Vol. 605. Springer-Verlag, Berlin-New York, 1977.

\bibitem[SY94]{SY1994} Schoen, R. and Yau, S.-T., {\sl  Lectures on differential geometry,} Conference Proceedings and Lecture Notes in Geometry and Topology, I. International Press, Cambridge, MA, 1994. v+235 pp. ISBN: 1-57146-012-8 

\bibitem[SW16]{SW2016} Simon, M. and Wheeler, G., {\sl Some local estimates and a uniqueness result for the entire biharmonic heat equation,} Adv. Calc. Var. 9 (2016), no. 1, 77–99.

\bibitem[T35]{T1935} Tichonov, A.N., {\sl Uniqueness theorems for the equation of heat conduction}, (in Russian) Matem. Sbornik, 42 (1935) 199-215.

\bibitem[W12]{Wang2012} Wang, Changyou, {\sl Well-posedness for the heat flow of biharmonic maps with rough initial data.} J. Geom. Anal. 22 (2012), no. 1, 223–243.

\bibitem[Y76]{Yau1976} S.-T. Yau, {\sl Some function-theoretic properties of complete Riemannian manifolds and their applications to geometry,} Indiana Math. J. 25 (1976), 659 - 670.
\end{thebibliography}
\end{document}